\newtheorem{thm}{Theorem}
\newtheorem{lem}[thm]{Lemma}
\newtheorem{dfn}[thm]{Definition}
\newtheorem{corr}[thm]{Corollary}
\newtheorem{example}{Example}
\DeclareMathOperator{\Q}{\mathbb{Q}}
\DeclareMathOperator{\R}{\mathbb{R}}
\DeclareMathOperator{\PPP}{\mathbf{P}}
\DeclareMathOperator{\OOO}{\mathbf{O}}
\DeclareMathOperator{\PP}{\mathcal{P}}
\DeclareMathOperator{\QQ}{\mathcal{Q}}
\DeclareMathOperator{\id}{\mathrm{Id}}
\DeclareMathOperator{\spanp}{\mathrm{span}^+}
\DeclareMathOperator{\NOP}{\mathbf{NOP}}
\DeclareMathOperator{\RUP}{\mathbf{RUP}}
\DeclareMathOperator{\RID}{\mathbf{RID}}
\DeclareMathOperator{\Circ}{\mathrm{Disc}}
\DeclareMathOperator{\Sect}{\mathrm{Sect}}
\newcommand{\dd}{\mathrm{d}}
\newcommand{\thetab}{\overline{\theta}}
\newcommand{\phib}{\overline{\varphi}}
\newcommand{\alphab}{\overline{\alpha}}
\newcommand{\Mib}{\overline{M_1}}
\newcommand{\Miib}{\overline{M_2}}
\newcommand{\Xib}{\overline{X_1}}
\newcommand{\Xiib}{\overline{X_2}}
\renewcommand{\phi}{\varphi}
\renewcommand{\epsilon}{\varepsilon}
\newcommand{\ssin}{\sin_{\mathbb{Q}}}
\newcommand{\scos}{\cos_{\mathbb{Q}}}
\theoremstyle{remark}
\title{A convex polyhedron without Rupert's property}
\author{Jakob Steininger\thanks{\href{https://www.statistik.at/en}{Statistics Austria}, \textit{Vienna}, \textit{Austria}. \href{mailto:steininger.jakob@yahoo.com}{steininger.jakob@yahoo.com}} \, and \, Sergey Yurkevich\thanks{\href{https://www.artech.at/}{A\&R TECH}, \textit{Vienna}, \textit{Austria}. \href{mailto:sergey.yurkevich@univie.ac.at}{sergey.yurkevich@univie.ac.at}}}
\date{\today}
\begin{document}
\maketitle

\begin{abstract}
    A three-dimensional convex body is said to have Rupert's property if its copy can be passed through a straight hole inside that body. In this work we construct a polyhedron which is provably not Rupert, thus we disprove a conjecture from 2017. We also find a polyhedron that is Rupert but not locally Rupert. 
\end{abstract}

\section{Introduction} \label{sec:intro}

In \cite[pp. 470]{Wallis1685} John Wallis proved a prediction by Prince Rupert of the Rhine concerning an interesting property of the three-dimensional cube: It is possible to cut a hole inside this solid such that an identical cube can pass through this hole. Formulated like that, this property sounds quite surprising, however the following equivalent formulation feels more believable: There exist two (orthogonal) projections of the 3D cube onto 2D, such that one of the resulting polygons (convex hulls) fits inside the other. For a visual illustration of this property for the cube, see Figure~\ref{fig:rupert_cube2}.

At least since the 20th century several mathematicians wondered which other solids have ``Rupert's property'' (\cref{def:rupert} contains the precise formulation). For instance, in \cite{Scriba68, JeWeYu17} it is proven that all Platonic solids are \emph{Rupert}, and the authors of \cite{JeWeYu17} asked whether all convex polyhedra in $\R^3$ have this property. Motivated by this conjecture, several articles looked at Archimedean, Catalan and Johnson solids from the perspective of Rupert's property  \cite{ChYaZa18,Tonpho18,Hoffmann19,Lavau19,StYu23, Fredriksson22}. As of today, most of these polyhedra are known to have Rupert's property, but some remain open. Several interesting approaches and connections were discovered, for instance a mysterious relationship between duality and Rupert's property. We refer to \cite{StYu23} for more details and a more thorough historical overview.

In this work we \emph{disprove} the aforementioned conjecture that all convex three-dimensional polyhedra are Rupert. More precisely, we explicitly construct a polyhedron which we call \emph{Noperthedron} ($\NOP$ for short, see Figure~\ref{fig:noperthedron}) and prove: 
\begin{thm} \label{thm:main}
    The Noperthedron does not have Rupert's property. 
\end{thm}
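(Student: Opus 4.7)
My plan is to exploit the width function of $\NOP$ to obstruct every candidate Rupert configuration. Recall that $\NOP$ is Rupert iff there exist projection directions $v_1, v_2 \in S^2$ and a planar rigid motion $\sigma$ (composed of a translation and a rotation by some angle $\theta$) such that $\sigma(\pi_{v_1}(\NOP)) \subsetneq \mathrm{int}(\pi_{v_2}(\NOP))$, where $\pi_v$ denotes orthogonal projection onto $v^\perp$. A routine observation is that if a compact convex set $A$ lies in the interior of another compact convex set $B$ in $\R^2$, then the widths satisfy $w_u(A) < w_u(B)$ for every direction $u \in S^1$: indeed $A$ may be thickened by an $\epsilon$-disk while staying inside $B$, so each width increases by $2\epsilon$. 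Applied to the Rupert setting, this says Rupert requires $w_u(\pi_{v_1}(\NOP)) < w_{\theta(u)}(\pi_{v_2}(\NOP))$ for \emph{every} planar $u$, where $\theta(u)$ is $u$ rotated by $\theta$. Hence to establish \cref{thm:main} it suffices to produce, for every triple $(v_1, v_2, \theta) \in S^2 \times S^2 \times S^1$, at least one direction $u$ for which the reverse inequality $\geq$ holds.

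The shadows simplify this dramatically: since $\pi_v(\NOP)$ has width in a planar direction $u \in v^\perp$ equal to the 3D width $w_{\NOP}(u) := \max_{x \in \NOP}\langle x, u\rangle - \min_{x \in \NOP}\langle x, u\rangle$, the obstruction becomes a statement purely about the spherical function $w_{\NOP}\colon S^2 \to \R$. The Noperthedron is presumably engineered so that $w_{\NOP}$ is highly uniform, for instance attaining its maximum on a geometrically rich (symmetry-invariant or dense) subset of $S^2$, so that every great circle $v^\perp$ meets a neighborhood of the maximum. Under such a condition, for any candidate inner direction $v_1$ one can pick $u \in v_1^\perp$ close to the maximum of $w_{\NOP}$; the resulting width is near $\max w_{\NOP}$, while the corresponding $\theta(u) \in v_2^\perp$ can never exceed this maximum. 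This would give the desired reverse inequality and thereby forbid strict containment.

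To make this quantitative I would first derive a piecewise-explicit description of $w_{\NOP}$ from the vertex coordinates of $\NOP$, and then discretize the compact parameter space $S^2 \times S^2 \times S^1$ (after quotienting by the symmetry group of $\NOP$) into small cells, applying interval arithmetic on each cell to certify a uniform width-inequality obstruction. The main technical obstacle will be handling configurations near the diagonal $v_1 \approx v_2$ with small $\theta$, where both shadows are nearly congruent and the widths extremely close: this near-degenerate regime demands a careful local $C^1$ analysis supplementing the global enumeration, and very likely a structural use of the symmetries of $\NOP$ to rule out the edge cases without unboundedly fine subdivision. A secondary challenge is the combinatorial explosion of facet--facet cases as both projection directions and planar rotations vary, which must be organized cleanly if the verification is to be tractable.
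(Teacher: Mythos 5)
Your reduction is fine as far as it goes (Rupert containment forces every width of the inner shadow to be strictly smaller, and planar widths of a shadow are 3D widths of $\NOP$), but the mechanism you propose for producing the reverse inequality is logically broken, and the obstruction you are restricting yourself to is provably too weak to finish the job. First, the step ``pick $u\in v_1^\perp$ close to the maximizer of $w_{\NOP}$, so $w_{\NOP}(u)$ is near the maximum while $w_{\NOP}(\theta(u))$ cannot exceed it'' does not give $w_{\NOP}(u)\geq w_{\NOP}(\theta(u))$: ``near the max'' is still below the max, and $\theta(u)$ may be nearer. Since $\NOP$ is point-symmetric, $w_{\NOP}=2h_{\NOP}$ and the width maximum is attained only at the $30$ unit directions of the $\mathcal{C}_{30}$-orbit of $C_1$ (the other $60$ vertices have norm $<0.99$); this finite set lies on two latitude circles, so a generic great circle $v_1^\perp$ (e.g.\ the equatorial one) stays far from it. So there is no a priori reason that a width-reversing direction $u$ exists for every $(v_1,v_2,\theta)$, and you give no argument that it does for $\NOP$.

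Second, for a point-symmetric solid your width obstruction in direction $u$ is exactly the support-function comparison $\max_S\langle R(\alpha)M(\theta_1,\phi_1)S,u\rangle \geq \max_P\langle M(\theta_2,\phi_2)P,u\rangle$, i.e.\ it is precisely the criterion of the paper's global theorem (\cref{thm:global}). The paper shows this criterion alone cannot certify the near-diagonal regime $v_1\approx v_2$, $\theta\approx 0$ (and its symmetry-induced copies): there the two shadows are nearly congruent, the obstruction margin tends to $0$, and no finite subdivision with interval arithmetic can cover an open neighborhood of the diagonal. This is exactly the regime you flag as ``the main technical obstacle'' but leave unresolved; the paper resolves it with a genuinely different tool, the local theorem (\cref{thm:local}), which is not a width argument at all: it combines $\epsilon$-spanning triples of vertices, the locally-maximally-distant condition on their projections, and the key \cref{lem:langles} (via \cref{lem:pythagoras}) showing that the three projected distances $\|M(\theta,\phi)P_i\|$ cannot all simultaneously increase. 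Without a replacement for that local ingredient, your plan reduces to the global half of the paper's proof and cannot be completed as stated.
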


\begin{figure}[htb]
    \begin{minipage}{0.48\textwidth}
        \centering
        \begin{tikzpicture}[scale=1]
            \draw[thick] (0,1.6329932) -- (1.4142,0.8164932) -- (1.4142,-0.8165) -- (0,-1.6329932) -- (-1.4142,-0.8164932) -- (-1.4142,0.8165) -- (0,1.6329932) -- (1.4142,0.8164932);
            \draw[thick] (-1.4142,0.8165) -- (0,0) -- (1.4142,0.8164932);
            \draw[thick] (0,0) -- (0,-1.6329932);
            \draw[dashed, gray] (0,0) -- (0,1.6329932);
            \draw[dashed, gray] (-1.4142,-0.8165) -- (0,0) -- (1.4142,-0.8164932);
            \draw[thick, blue] (-1,1) -- (1,1) -- (1,-1) -- (-1,-1) -- (-1,1) -- (1,1);
            \fill[blue!60, opacity=0.2] (-1,1) -- (1,1) -- (1,-1) -- (-1,-1) -- (-1,1);
        \end{tikzpicture}
        \caption{Two projections of the unit cube.}
        \label{fig:rupert_cube2}
    \end{minipage}\hfill
    \begin{minipage}{0.48\textwidth}
        \centering
        \includegraphics[width=0.5\linewidth]{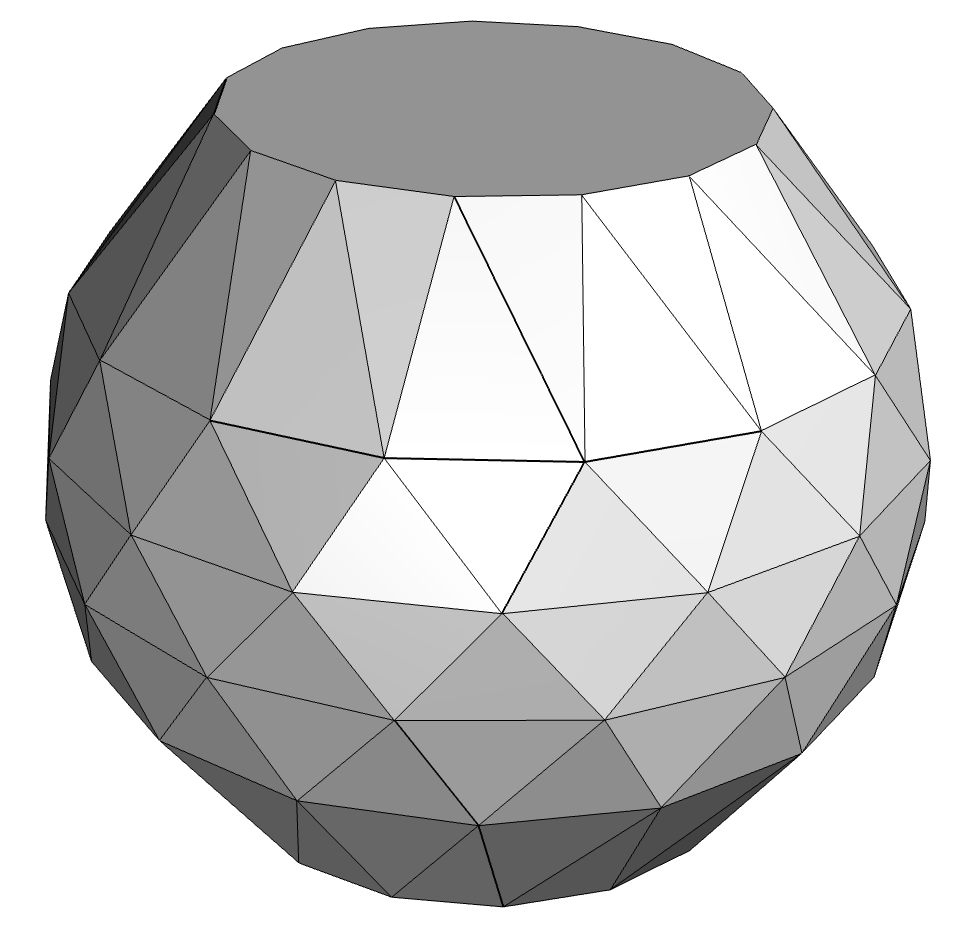} 
        \caption{The Noperthedron.}
        \label{fig:noperthedron}
    \end{minipage}
\end{figure}

\subsection{Setting and definitions}

In this paper a \emph{polyhedron}, usually denoted by $\PPP$, is a finite non-degenerate set of points in $\R^3$ in convex position. We will denote by $\overline{\PPP} \subseteq \R^3$ the smallest convex set containing all points of a polyhedron $\PPP$ (i.e. including its interior)
and by $\PPP^\circ$ the interior of $\overline{\PPP}$. Further, a polyhedron $\PPP$ is called point-symmetric (with respect to the origin) if $\PPP = -\PPP$, i.e. $-P \in \PPP$ for all $P \in \PPP$. Note that all polyhedra in this article will have this property. We will say that such a polyhedron $\PPP$ has radius $\rho > 0$ if for all $P \in \PPP$ it holds that $\|P\| \leq \rho$ and for some $P \in \PPP$ equality~holds.

A polygon, usually denoted by $\PP$ or $\QQ$, is a finite set of points in $\R^2$ that not all lie on the same line. Similar to polyhedra, we denote by $\overline{\PP}$ the convex hull of $\PP$ and by $\PP^\circ$ the interior of~ $\overline{\PP}$.

In order to define Rupert's property we first introduce the rotation matrix $R(\alpha)$, a parametrization of the unit sphere $X(\theta, \phi)$ and $M(\theta, \phi)$ -- the orthogonal projection from $\R^3$ to $\R^2$ in direction of $X(\theta, \phi)$:

\begin{dfn} \label{def:RXM}
For any $\alpha, \theta, \varphi \in \R$ we define: 
    \begin{align*}
    R(\alpha)&\coloneqq\begin{pmatrix} 
    \cos\alpha & -\sin\alpha \\ 
    \sin\alpha& \cos\alpha\\ 
    \end{pmatrix} \in \R^{2 \times 2},\\
    X({\theta,\varphi})&\coloneqq(\cos \theta \sin \varphi,\sin \theta \sin \varphi, \cos \varphi)^t \in \R^{3} \cong \R^{3 \times 1},\\
    M(\theta, \phi) &\coloneqq
    \begin{pmatrix} 
    -\sin\theta & \cos\theta & 0 \\ 
    -\cos\theta\cos\varphi & -\sin\theta\cos\varphi & \sin\varphi
    \end{pmatrix} \in \R^{2 \times 3}.
    \end{align*}
\end{dfn}
We let $R(\alpha)$ and $M(\theta,\phi)$ act on \emph{sets of points} so that $R(\alpha) \PP$ is a rotation of a polygon $\PP$ and $M(\theta,\phi) \PPP$ is a projection of a polyhedron $\PPP$. In \cite[Prop. 2]{StYu23} the following property is proven to be equivalent to the original statement of Rupert's property for pointsymmetric polyhedra. We will use it as the \emph{definition} of Rupert's property in this paper:

\begin{dfn} \label{def:rupert}
    A pointsymmetric convex polyhedron $\PPP$ \emph{has Rupert's property} (or \emph{``is Rupert''}) if there exist $\theta_1,\theta_2\in[0,2\pi)$, $\varphi_1,\varphi_2\in[0,\pi]$ and $\alpha \in [-\pi, \pi)$ such that
\begin{align} \label{eq:rupert_condition}
R(\alpha) M(\theta_1, \phi_1)\PPP \subset (M(\theta_2, \phi_2)\PPP)^\circ.
\end{align}
\end{dfn}

Note that in this definition $\PPP$ is assumed to pointsymmetric, therefore, as proven in~\cite[Prop.~2]{StYu23}, no translation is needed. 
As $\PPP$ will always be pointsymmetric for us, we stick to this definition. 
The pointsymmetric property also immediately implies that the region for $\alpha$ can be equivalently reduced to $[-\pi/2, \pi/2)$, as we will show in \cref{lem:symmetries}.

\subsection{Structure of the paper}
The paper is organized as follows: In \cref{sec:outline} we briefly explain the strategy of our proof; in particular we explain the idea behind the \emph{global} and \emph{local} theorems. \cref{sec:noperthedron} contains the definition of the Noperthedron as well as proofs of some of its properties. In \cref{sec:bounds} we prove sharp bounds for operations containing $R(\alpha)$ and $M(\theta,\phi)$, for instance, the essential \cref{lem:sqrt2} and \cref{lem:sqrt5} which state that
\begin{align} \label{eq:sqrt2sqrt5intro}
    \| M(\theta, \phi) - M(\thetab, \phib) \| \leq \sqrt{2}\epsilon \quad \text{and} \quad    \| R(\alpha) M(\theta, \phi) - R(\alphab) M(\thetab, \phib) \| \leq \sqrt{5} \epsilon
\end{align}
if $|\alpha-\alphab|, |\theta-\thetab|,|\varphi-\phib| \leq \varepsilon$.

The Sections \ref{sec:global_thm} and \ref{sec:local_thm} state and prove the global and local theorems respectively. 
Furthermore, as it will be explained in \cref{sec:rational} it is not enough for our purposes to prove the two aforementioned theorems since we need \emph{rational} versions of them for a rigorous computer implementation -- these theorems are stated and proven in \cref{sec:rational}. 
In \cref{sec:algoproof} we explain how we implemented the rational versions of the global and local theorems in \href{https://www.r-project.org/}{R} and \href{https://www.sagemath.org/}{SageMath} -- this, together with the provided source code, completes the proof of our main \cref{thm:main}.
In the penultimate \cref{sec:rupertnotlocal} we construct another polyhedron which is Rupert but not ``locally Rupert''. 
Finally, \cref{sec:discussion} explains how the Noperthedron was found and collects some of the remaining open questions.

\subsection{Outline of the proof} \label{sec:outline}
The overall strategy of this paper for proving that some polyhedron $\PPP$ does not have Rupert's property is to partition the five-dimensional solution space 
\begin{align*} 
    I = [0,2\pi) \times [0, \pi] \times [0,2\pi) \times [0, \pi] \times [-\pi, \pi)
\end{align*}
of \cref{def:rupert} for $(\theta_1,\phi_1,\theta_2,\phi_2,\alpha)$ into small parts and prove for each that no solution in that region exists. Roughly speaking, the latter will be done using the following approach: We show that the ``middle point'' of any region does not yield a solution and argue with effective continuity of the parameters that this also excludes an explicit neighborhood around that point; since we always choose the regions smaller than these neighborhoods, this finally excludes the whole solution space.

Before we explain how to exclude solutions to Rupert's property for regions around a point, we note that the initial interval $I \subseteq \R^5$ can be chosen smaller if $\PPP$ has symmetries. For instance, in \cref{lem:symmetries} we prove that
\[
    M({\theta+2\pi/15,\varphi})\cdot \NOP =M(\theta, \phi) \cdot \NOP
\]
which allows us to conclude that it is enough to consider $[0, 2\pi/15)$ for $\theta_1$ and $\theta_2$. This alone is a reduction of the initial search space by a factor of 225.

For proving that a given $\Psi = (\thetab_1,\phib_1,\thetab_2,\phib_2,\overline{\alpha}) \in \R^5$ does not yield a solution to Rupert's problem for $\PPP$ and excluding a whole region around $\Psi$ we have the \emph{global} (\cref{thm:global}) and the \emph{local} (\cref{thm:local}) theorems at our disposal. Before explaining them, however, let us formulate:
\begin{dfn}
For $\thetab_1,\phib_1,\thetab_2,\phib_2,\overline{\alpha} \in \R$ and $\epsilon>0$ we denote by $[\thetab_1 \pm \epsilon, \phib_1 \pm \epsilon, \thetab_2\pm \epsilon, \phib_2\pm \epsilon, \overline{\alpha} \pm \epsilon]$ the \emph{filled} (closed) 5-dimensional cube with midpoint $(\thetab_1,\phib_1,\thetab_2,\phib_2,\overline{\alpha}) \in \R^5$ and side length $2 \epsilon$.    
\end{dfn}

The \emph{global theorem} (\cref{thm:global}) is intended for two generic projections of $\PPP$: It is based on the fact that if a vertex of the ``smaller'' projection $\PP = R(\alphab) M(\thetab_1, \phib_1)\PPP$ is \emph{strictly outside} the ``larger'' projection $\QQ = M(\thetab_2, \phib_2)\PPP$ then, obviously, $\thetab_1,\phib_1,\thetab_2,\phib_2,\alphab$ is not a solution to Rupert's problem for $\PPP$. Moreover, by continuity it is easy to see that there exists an $\epsilon>0$ such that $[\thetab_1 \pm \epsilon, \phib_1 \pm \epsilon, \thetab_2\pm \epsilon, \phib_2\pm \epsilon, \overline{\alpha} \pm \epsilon]$ has no solutions either. In fact, the distance of that vertex to $\QQ$ and some explicit bounds like (\ref{eq:sqrt2sqrt5intro}) can be used to obtain tight estimates on $\epsilon$. In order to be able to choose $\epsilon$ as big as possible, we will actually use the evaluation of the first and second derivatives of $R(\alpha)$, $M(\theta,\phi)$ at $\Psi$ instead of global estimates like in (\ref{eq:sqrt2sqrt5intro}).

If the projections $\PP$ and $\QQ$ look (almost) exactly the same, for instance if $\thetab_1 \approx \thetab_2$, $\phib_1 \approx \phib_2$ and $\alphab \approx 0$ then the global theorem cannot be used to exclude a region of solutions, since the farthest vertex of $\PP$ from $\QQ$ can be arbitrarily close to $\QQ$. 
In this case we use the \emph{local theorem} (\cref{thm:local}): Roughly speaking, it states that if there exist three points $P_1,P_2,P_3 \in \PPP$ whose projection $M(\thetab, \phib) \cdot \{P_1, P_2, P_3\}$ is part of the convex hull of $\PP \approx \QQ$ and such that the triangle formed by their projection has specific properties, then a region around $(\thetab, \phib, \thetab, \phib, 0)$ cannot have solutions to Rupert's property. 
More precisely, $P_1, P_2, P_3$ must be $\epsilon$-spanning for $(\thetab, \phib)$ meaning that the origin is inside the aforementioned triangle, even after perturbing $\thetab, \phib$ by $\epsilon$ (see \cref{def:eps-spanning}) and the projected points must be $\delta$-\emph{locally maximally distant} for a specific choice of $\delta$ (see \cref{def:LMD}).
The core idea of the local theorem is that, under some natural conditions, the three distances $\|M(\theta, \phi)P_1\|$, $\|M(\theta, \phi)P_2\|$, $\|M(\theta, \phi)P_3\|$ cannot all simultaneously locally increase but, at the same time, if we had a local solution $(\theta_1, \phi_1, \theta_2, \phi_2, \alpha)$ around $(\thetab, \phib, \thetab, \phib, 0)$ then $R(\alpha) M(\theta_1, \phi_1)\PPP \subset (M(\theta_2, \phi_2)\PPP)^\circ$ usually implies that $\|R(\alpha) M(\theta_1, \phi_1) P_i\| = \|M(\theta_1, \phi_1) P_i\| < \|M(\theta_2, \phi_2) P_i\|$ for $i=1,2,3$. 
We use the local theorem more generally, also excluding cases in which $\PP \approx \QQ$ due to symmetries of the Noperthedron; this is why the version presented in \cref{thm:local} might look convoluted at first~sight.

It turns out that it is possible to partition the reduced search space $I$ into roughly 18 million regions such that in each either the global or local theorem applies. We implemented a fast \emph{divide and conquer} algorithm in \href{https://www.r-project.org/}{R} to construct a tree-like structure which stores all regions and the reasons why Rupert's property can be excluded in them (see \cref{sec:algoproof} for more details). However, a final hurdle still remains: using these theorems with naive floating point arithmetic does not guarantee their correctness. Therefore, we prove \cref{thm:global_rational} and \cref{thm:local_rational}, the \emph{rational versions} the global and local theorems. Roughly speaking, they say how to replace all occurring real numbers with rational numbers, i.e. how much rounding affects the inequalities and by how much they need to be refined. We implemented these theorems in the computer algebra software \href{https://www.sagemath.org/}{SageMath}, in order to rigorously confirm that they indeed apply for all 18 million regions and that indeed the union of the regions covers the initial search space. The source code and all the implementations can be found at \href{https://github.com/Jakob256/Rupert}{www.github.com/Jakob256/Rupert}. \\

\noindent
\textbf{Acknowledgments:} We thank Alin Bostan for his everlasting moral support. We also thank \href{https://dwrensha.ws/}{David Renshaw} and \href{https://github.com/jcreedcmu}{Jason Reed} for their \href{https://jcreedcmu.github.io/Noperthedron/}{immense work} on the formalization of this paper in \href{https://lean-lang.org/}{Lean} and the correction of several inaccuracies of the first version of this manuscript. Moreover, the second author is also grateful to Matthias Auchmann and \href{https://www.artech.at/}{A\&R TECH} for financial support as well as some computing resources.

\section{The Noperthedron} \label{sec:noperthedron}
In this section we define the \emph{Noperthedron} $\NOP$ and prove elementary statements about this solid in view of Rupert's property. More precisely, we utilize the symmetries of $\NOP$ in order to show that if that solid was Rupert then the parameters from \cref{def:rupert} could be taken from much smaller~intervals than in the initial definition (\cref{lem:symmetries}).

\subsection{Definition of the Noperthedron}

We start by defining three-dimensional rotation matrices about the $x, y$ and $z$ axes by an angle $\alpha$:
\begin{dfn} \label{def:rotation}
    We define the functions $R_x,R_y,R_z:$  $\mathbb{R}\to \mathbb{R}^{3\times 3}$ by
    \[
            R_x(\alpha)\coloneqq 
        \begin{pmatrix} 
            1 & 0 & 0\\ 
            0 & \cos\alpha & -\sin\alpha\\
            0 & \sin\alpha & \cos\alpha
        \end{pmatrix}, 
        \hspace{1cm}
        R_y(\alpha)\coloneqq 
        \begin{pmatrix} 
            \cos\alpha & 0 & -\sin\alpha\\ 
            0 & 1 & 0\\
            \sin\alpha & 0 & \cos\alpha
        \end{pmatrix},
    \]
    \[
        R_z(\alpha)\coloneqq 
        \begin{pmatrix} 
            \cos\alpha & -\sin\alpha &0\\
            \sin\alpha & \cos\alpha &0\\
            0 & 0 & 1
        \end{pmatrix}.
    \]
\end{dfn}
Note that since these matrices are simply rotations one has $R_a(\alpha)R_a(\beta)=R_a(\alpha+\beta)$ for $a \in \{x,y,z\}$.
We define a representation of the cyclic group $\mathcal{C}_{30}$ by
\[
    \mathcal{C}_{30} \coloneqq \left\{(-1)^\ell R_z\left(\frac{2\pi k}{15}\right) \colon k=0,\dots,14; \ell=0,1\right\}.
\]
Note that $|\mathcal{C}_{30}| = 30$ and we will write $\mathcal{C}_{30} \cdot P = \{c P \,\text{ for } \, c \in \mathcal{C}_{30}\}$ for the orbit of $P$ under the action of $\mathcal{C}_{30}$.

Furthermore, we set three points $C_1,C_2,C_3\in \mathbb{Q}^3$ to be:
\[
    C_1\coloneqq
        \frac{1}{259375205}
        \begin{pmatrix} 
        {152024884} \\ 0 \\ {210152163} 
        \end{pmatrix},
    \qquad
    C_2\coloneqq \frac{1}{10^{10}}
        \begin{pmatrix} 
        6632738028 \\ 6106948881 \\ 3980949609
        \end{pmatrix},
\]
\[
    C_3\coloneqq
        \frac{1}{10^{10}}
        \begin{pmatrix} 
        8193990033 \\ 5298215096 \\ 1230614493
        \end{pmatrix}.
\]
Note that the distance from $C_1$ to the origin is exactly 1, and that $\frac{98}{100} < \|C_i\| < \frac{99}{100}$ for $i=2,3$. 

Now we are ready to define our polyhedron:

\begin{dfn}
    We define the set of points $\NOP \subseteq \mathbb{R}^3$ by the action of $\mathcal{C}_{30}$ on $C_1,C_2,C_3$: 
    \[
        \NOP \coloneqq \mathcal{C}_{30} \cdot C_1 \cup \mathcal{C}_{30}\cdot C_2\cup \mathcal{C}_{30} \cdot C_3,
    \]
    and we call the resulting polyhedron the \emph{Noperthedron}.
\end{dfn}

Clearly, the \emph{Noperthedron} has $3 \cdot 30=90$ vertices. Moreover, since $-\id = (-1)\cdot R_z(0) \in \mathcal{C}_{30}$, $\NOP$ is pointsymmetric with respect to the origin. Finally, all vertices of $\NOP$ have Euclidean norm at most $1$ and at least ${98}/{100}$.

\subsection{Refined Rupert's property for the Noperthedron}
Recall from \cref{def:RXM} that $X({\theta, \varphi}) = (\cos \theta \sin \varphi,\sin \theta \sin \varphi, \cos \varphi)^t$ and that
\begin{align*}
    R(\alpha) =
    \begin{pmatrix} 
        \cos\alpha & -\sin\alpha \\ 
        \sin\alpha& \cos\alpha\\ 
    \end{pmatrix}, 
    \quad 
    M(\theta, \phi) =
    \begin{pmatrix} 
    -\sin\theta & \cos\theta & 0 \\ 
    -\cos\theta\cos\varphi & -\sin\theta\cos\varphi & \sin\varphi
    \end{pmatrix}.
\end{align*}
    
The following three important identities directly follow from the definitions and $\sin x=-\sin(-x)$ as well as $\cos x=\cos(-x)$:
\begin{align} \label{eq:XMRfromR1}
X({\theta, \varphi})^t & =
\begin{pmatrix} 
    0 & 0 & 1\\
\end{pmatrix}
\cdot R_y(\varphi) \cdot R_z(-\theta),\\
M(\theta, \phi) & =
    \begin{pmatrix} 
        0 & 1 & 0\\ 
        -1 & 0 & 0\\
    \end{pmatrix}
    \cdot R_y(\varphi) \cdot R_z(-\theta) \label{eq:XMRfromR2},\\
R(\alpha) M(\theta, \phi) & =
   \begin{pmatrix} 
        0 & 1 & 0\\ 
        -1 & 0 & 0\\
    \end{pmatrix} \cdot
    R_z(\alpha) \cdot R_y(\varphi) \cdot R_z(-\theta). \label{eq:XMRfromR3}
\end{align}
These identities allow us to write down the induced symmetries of $\mathcal{C}_{30}$ on $\NOP$ in terms of $R(\alpha),X({\theta, \varphi}), M(\theta, \phi)$. For doing so, when $M \in \R^{k \times 3}$ (for $k \in \{2,3\}$) is a matrix and $\PPP \subseteq \R^{3}$ a polyhedron (i.e. a set of points in $\R^3$), we will use the natural notation $M \cdot \PPP$ for the set of points in $\R^k$ given by $\{M \cdot P \text{ for } P \in \PPP\}$. In particular, $M(\theta, \phi) \cdot \PPP$ is the polygon given by the projection of $\PPP$ in direction~$X({\theta, \varphi})$.

\begin{lem} \label{lem:symmetries}
Let $\PPP = \NOP$, then for all $\theta, \varphi, \alpha \in \R$, the following three identities hold (as sets):
\begin{align*}
    M({\theta+2\pi/15,\varphi})\cdot \PPP &=M(\theta, \phi) \cdot \PPP,\\
    R(\alpha+\pi)M(\theta, \phi) \cdot \PPP &=R(\alpha)M(\theta, \phi) \cdot \PPP,\\
    \begin{pmatrix} 
        1&0\\
        0&-1
    \end{pmatrix}
    M(\theta, \phi) \cdot \PPP&=
    M({\theta+\pi/15,\pi-\varphi}) \cdot \PPP.
\end{align*}
\end{lem}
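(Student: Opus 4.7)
The plan is to leverage the $\mathcal{C}_{30}$-invariance $c \cdot \NOP = \NOP$ (for every $c \in \mathcal{C}_{30}$) together with the factorizations~(\ref{eq:XMRfromR2})--(\ref{eq:XMRfromR3}), which let me absorb shifts of the angular parameters of $M$ into right multiplication of $M(\theta,\varphi)$ by a suitable rotation of $\R^3$.

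For the first identity, (\ref{eq:XMRfromR2}) together with $R_z(-\theta - 2\pi/15) = R_z(-\theta) R_z(-2\pi/15)$ directly yields $M(\theta + 2\pi/15, \varphi) = M(\theta, \varphi) \cdot R_z(-2\pi/15)$; since $R_z(-2\pi/15) = R_z(2\pi \cdot 14/15) \in \mathcal{C}_{30}$ preserves $\NOP$, applying both sides to $\NOP$ gives the identity. For the second identity, the key observation is $R(\pi) = -I_2$, so that $R(\alpha + \pi) = R(\alpha)R(\pi) = -R(\alpha)$; applying this to $\NOP$ and using $-\NOP = \NOP$ (which holds because $-\id \in \mathcal{C}_{30}$) concludes it.

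For the third identity I plan to reduce to the first two rather than hunt for a direct symmetry of $\NOP$. A short trigonometric computation using $\sin(\theta+\pi) = -\sin\theta$, $\cos(\theta+\pi) = -\cos\theta$, $\cos(\pi-\varphi) = -\cos\varphi$ and $\sin(\pi-\varphi) = \sin\varphi$ establishes the pure matrix identity
\[
\begin{pmatrix} 1 & 0 \\ 0 & -1 \end{pmatrix} M(\theta,\varphi) \;=\; -\,M(\theta + \pi, \pi - \varphi).
\]
Applying both sides to $\NOP$ and invoking the already proven second identity with $\alpha = 0$ absorbs the minus sign on the right. Finally, seven applications of the already proven first identity shift the first argument by $-7 \cdot 2\pi/15 = -14\pi/15$, turning $\theta + \pi$ into $\theta + \pi/15$. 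The main potential obstacle is precisely this third identity: a direct approach looking for a single matrix $S \in \mathcal{C}_{30}$ satisfying $\bigl(\begin{smallmatrix} 1 & 0 \\ 0 & -1 \end{smallmatrix}\bigr) M(\theta, \varphi) = M(\theta + \pi/15, \pi - \varphi) \cdot S$ leads to fiddly matrix algebra with reflections (one can work it out to get $S = -R_z(2\pi \cdot 8/15)$), which the decomposition above sidesteps cleanly by splitting the operation into a pointsymmetry and seven cyclic shifts.
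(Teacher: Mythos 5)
Your proposal is correct and follows essentially the same route as the paper: both arguments rest on the factorizations (\ref{eq:XMRfromR2})--(\ref{eq:XMRfromR3}) and the invariance of $\NOP$ under $\mathcal{C}_{30}$ (including $-\id \in \mathcal{C}_{30}$). The only cosmetic difference is in the third identity, where the paper directly verifies the single matrix identity $\bigl(\begin{smallmatrix}1&0\\0&-1\end{smallmatrix}\bigr)M(\theta,\varphi) = -\,M(\theta+\pi/15,\pi-\varphi)\,R_z(16\pi/15)$ and uses $-R_z(16\pi/15)\PPP=\PPP$, whereas you verify the equivalent identity with $\theta+\pi$ and then absorb the factor $-R_z(16\pi/15)$ by chaining the already-proven first two set identities; the matrix $S=-R_z(2\pi\cdot 8/15)$ you mention as the ``fiddly'' direct route is exactly the one the paper uses.
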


\begin{proof}
    The first two statements follow directly from the definition of $\mathcal{C}_{30}$, the identity (\ref{eq:XMRfromR2}) above and the pointsymmetry of $\PPP$. The third identity can be checked with a simple computation using~that
    \[
        \begin{pmatrix}
            1 & 0\\
            0 & -1
        \end{pmatrix} \cdot 
        M(\theta, \phi) = - M(\theta + \pi/15, \pi - \phi) \cdot R_z(16 \pi/15),
    \]
    and that $- R_z(16 \pi/15) P \in \PPP$ if $P \in \PPP$.
\end{proof}

Recall from \cref{def:rupert} that a pointsymmetric polyhedron $\PPP$ has Rupert's property if there exist $\theta_1,\theta_2\in[0,2\pi)$, $\varphi_1,\varphi_2\in[0,\pi]$ and $\alpha \in [-\pi, \pi)$ such that Rupert's condition (\ref{eq:rupert_condition}) is satisfied. Now the symmetries detailed in \cref{lem:symmetries} allow us to drastically reduce the space of $\theta_1,\theta_2,\varphi_1,\varphi_2, \alpha$:

\begin{corr} \label{cor:NOPbounds}
If $\NOP$ is Rupert, then there exists a solution to its Rupert's condition (\ref{eq:rupert_condition}) with  
\begin{align*}    
\theta_1,\theta_2&\in[0,2\pi/15] \subset [0,0.42], \\
\varphi_1&\in [0,\pi] \subset [0,3.15],\\
\varphi_2&\in [0,\pi/2] \subset [0,1.58],\\
\alpha &\in [-\pi/2,\pi/2] \subset [-1.58,1.58].
\end{align*}
\end{corr}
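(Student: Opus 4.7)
The plan is to obtain the restricted ranges by applying in turn the three symmetries from \cref{lem:symmetries} to an assumed Rupert solution $(\theta_1,\varphi_1,\theta_2,\varphi_2,\alpha)$, each time producing another valid solution with one or more parameters brought into the claimed interval. The three reductions are essentially independent, but care must be taken with their composition since the reflection symmetry shifts $\theta_i$ and $\varphi_i$.

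First I would use the third identity of \cref{lem:symmetries} to force $\varphi_2 \in [0,\pi/2]$. If $\varphi_2 > \pi/2$, apply the reflection $F \coloneqq \mathrm{diag}(1,-1)$ to both sides of Rupert's condition~(\ref{eq:rupert_condition}). Since $F R(\alpha) = R(-\alpha) F$ and by the third identity $F \cdot M(\theta,\varphi)\PPP = M(\theta+\pi/15,\pi-\varphi)\PPP$, the condition transforms into
\[
R(-\alpha)\, M(\theta_1 + \pi/15,\,\pi-\varphi_1)\,\PPP \;\subset\; \bigl(M(\theta_2+\pi/15,\,\pi-\varphi_2)\,\PPP\bigr)^\circ,
\]
giving a new solution $(\theta_1+\pi/15,\pi-\varphi_1,\theta_2+\pi/15,\pi-\varphi_2,-\alpha)$. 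Since $\pi-\varphi_2 \in [0,\pi/2]$ and $\pi-\varphi_1 \in [0,\pi]$, this achieves the bound on $\varphi_2$ while keeping $\varphi_1 \in [0,\pi]$.

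Second, I would apply the first identity of \cref{lem:symmetries}, namely $M(\theta+2\pi/15,\varphi)\PPP = M(\theta,\varphi)\PPP$. Both sides of Rupert's condition are then invariant under independently shifting $\theta_1$ and $\theta_2$ by multiples of $2\pi/15$, so we may reduce each to $[0, 2\pi/15]$. Third, using the second identity $R(\alpha+\pi)M(\theta,\varphi)\PPP = R(\alpha)M(\theta,\varphi)\PPP$ (which just reflects the point-symmetry of the inner projection), we may shift $\alpha$ by an integer multiple of $\pi$ to land in $[-\pi/2,\pi/2]$. The decimal enclosures then follow from $2\pi/15 < 0.42$, $\pi < 3.15$, and $\pi/2 < 1.58$.

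The only real obstacle is the ordering and bookkeeping: the reflection step alters $\theta_i$ by $+\pi/15$, so one must either perform the reflection before the $\theta$-reduction or afterwards re-apply it, and one must check that the negation $\alpha \mapsto -\alpha$ preserves $\alpha \in [-\pi/2,\pi/2]$ (which it does). Once the three reductions are composed in the correct order, no computation beyond verifying the cited identities is required.
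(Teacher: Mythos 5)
Your proposal is correct and follows the paper's own argument essentially verbatim: first apply the reflection $\mathrm{diag}(1,-1)$ together with the third identity of \cref{lem:symmetries} to obtain the solution $(\theta_1+\pi/15,\pi-\varphi_1,\theta_2+\pi/15,\pi-\varphi_2,-\alpha)$ and hence force $\varphi_2\in[0,\pi/2]$, then use the first and second identities to bring $\theta_1,\theta_2$ and $\alpha$ into the claimed ranges. The bookkeeping concerns you raise (ordering of the reductions, sign of $\alpha$) are handled exactly as you describe and do not affect the argument.
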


\begin{proof}
    Let us assume a solution $(\theta_1,\varphi_1,\theta_2,\varphi_2,\alpha)$ to the Rupert condition
    $R(\alpha) M(\theta_1, \phi_1)\PPP \subset (M(\theta_2, \phi_2)\PPP)^\circ$
    is given. Then, this inclusion still holds true, when mirroring all the points along the $x$-axis, hence
    $$\begin{pmatrix} 
        1&0\\
        0&-1
    \end{pmatrix}
    R(\alpha) M(\theta_1, \phi_1)\PPP \subset \begin{pmatrix} 
        1&0\\
        0&-1
    \end{pmatrix}(M(\theta_2, \phi_2)\PPP)^\circ.$$
    Using 
    \[
        \begin{pmatrix} 
            1&0\\
            0&-1
        \end{pmatrix}R(\alpha)=
        R({-\alpha})\begin{pmatrix} 
            1&0\\
            0&-1
        \end{pmatrix}
    \]
    and the third identity in \cref{lem:symmetries}, we arrive at
    \[
        R({-\alpha})M({\theta_1+\pi/15,\pi-\varphi_1}) \PPP
        \subset (M({\theta_2+\pi/15,\pi-\varphi_2})\PPP)^\circ.
    \]
    So, if $(\theta_1,\varphi_1,\theta_2,\varphi_2,\alpha)$ is a solution to Rupert's condition for $\PPP$, then 
    \[
        (\theta_1+\pi/15,\pi-\varphi_1,\theta_2+\pi/15,\pi-\varphi_2,-\alpha)
    \]
    is also a solution. It follows that we may assume that there exists a solution with $\varphi_2\in[0,\pi/2]$. Finally, using the first and second identity of \cref{lem:symmetries}, we can enforce the remaining parameters to lie in the claimed intervals.
\end{proof}

\section{Bounding $R_a(\alpha), R(\alpha), M(\theta, \phi)$} \label{sec:bounds}

For a matrix $A\in \mathbb{R}^{m \times n}$ we will write $\|A\|$ for its (operator) norm given by $\|A\| = \max_{v\in \R^n}\|Av\|/\|v\|$, where $\|v\|$ for vectors $v \in \mathbb{R}^n$ denotes the usual Euclidean norm, as before. Recall that $\|.\|$ is sub-multiplicative and that multiplication by an orthogonal matrix (a matrix $Q \in \R^{n \times n}$, such that $QQ^t = \id$) leaves the norm unchanged. 


\begin{lem} \label{lem:RaRalpha}
For any $\alpha, \theta,\varphi \in \R$ and $a \in \{x,y,z\}$ one has $\| R(\alpha)\| = \| R_a(\alpha)\| =\| M(\theta, \phi)\| = 1$.
\end{lem}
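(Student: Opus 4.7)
The plan is to treat the three matrices in two groups: the square rotation matrices $R(\alpha)$ and $R_a(\alpha)$ on the one hand, and the rectangular matrix $M(\theta,\phi)$ on the other. Since everything is stated in terms of the Euclidean operator norm, the general strategy is to verify orthonormality of rows (or columns) and then deduce the norm from it.

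First I would observe that $R(\alpha)\in\R^{2\times 2}$ and each of $R_x(\alpha),R_y(\alpha),R_z(\alpha)\in\R^{3\times 3}$ are orthogonal matrices: a direct check of the definitions in \cref{def:RXM} and \cref{def:rotation} shows that $R(\alpha)R(\alpha)^t=\mathrm{Id}$ and $R_a(\alpha)R_a(\alpha)^t=\mathrm{Id}$ for $a\in\{x,y,z\}$ (each column is a unit vector and distinct columns are orthogonal). As noted in the paragraph preceding the lemma, multiplication by an orthogonal matrix preserves the Euclidean norm of any vector, so $\|R(\alpha)v\|=\|v\|$ and $\|R_a(\alpha)v\|=\|v\|$ for every $v$, giving $\|R(\alpha)\|=\|R_a(\alpha)\|=1$.

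For the rectangular matrix $M(\theta,\phi)\in\R^{2\times 3}$ I would compute $M(\theta,\phi)M(\theta,\phi)^t$ directly. Denoting the two rows by
\[
    r_1=(-\sin\theta,\cos\theta,0),\qquad r_2=(-\cos\theta\cos\varphi,-\sin\theta\cos\varphi,\sin\varphi),
\]
a short calculation using $\sin^2+\cos^2=1$ gives $\|r_1\|^2=\|r_2\|^2=1$ and $r_1\cdot r_2=0$, so $M(\theta,\phi)M(\theta,\phi)^t=\mathrm{Id}\in\R^{2\times 2}$. Consequently, for every $v\in\R^3$,
\[
    \|M(\theta,\phi)v\|^2=(r_1\cdot v)^2+(r_2\cdot v)^2\leq \|v\|^2
\]
by Bessel's inequality (equivalently, $M(\theta,\phi)$ is the orthogonal projection onto the plane $\mathrm{span}(r_1,r_2)$ expressed in the basis $\{r_1,r_2\}$). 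This yields $\|M(\theta,\phi)\|\leq 1$.

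For the reverse inequality I would exhibit a vector attaining the bound: taking $v=r_1^t$, the orthonormality of the rows gives $M(\theta,\phi)\,r_1^t=(1,0)^t$, so $\|M(\theta,\phi)r_1^t\|=1=\|r_1^t\|$, hence $\|M(\theta,\phi)\|\geq 1$. Combining both inequalities finishes the proof. There is no serious obstacle here; the only thing to be careful about is to recognize that for a non-square matrix one still has to argue both directions of the inequality separately, rather than invoking "orthogonality" directly as one can for $R(\alpha)$ and $R_a(\alpha)$.
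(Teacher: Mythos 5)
Your proof is correct, and it differs from the paper's only in how the upper bound $\|M(\theta,\phi)\|\leq 1$ is obtained. The paper handles $R(\alpha)$ and $R_a(\alpha)$ exactly as you do (rotations are orthogonal, hence norm $1$), but for $M(\theta,\phi)$ it invokes the factorization (\ref{eq:XMRfromR2}), namely $M(\theta,\phi)=Z\,R_y(\varphi)R_z(-\theta)$ with $Z$ the constant $2\times 3$ matrix of norm $1$, and concludes $\|M\|\leq 1$ by sub-multiplicativity; you instead verify directly that the two rows of $M(\theta,\phi)$ are orthonormal, i.e. $M(\theta,\phi)M(\theta,\phi)^t=\id$, and apply Bessel's inequality. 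Both arguments are elementary and both finish with the identical witness vector $v=(-\sin\theta,\cos\theta,0)^t$ for the reverse inequality. Your route is self-contained and makes the geometric content (the rows span the projection plane) explicit, while the paper's route reuses the identity (\ref{eq:XMRfromR2}) that it needs anyway in later norm estimates such as \cref{lem:sqrt2} and \cref{lem:sqrt5}, so nothing extra has to be computed there. You are also right to flag that for a non-square matrix one must argue both inequalities separately; the paper does exactly that, just with a different mechanism for the ``$\leq$'' half.
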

\begin{proof}
The statement is obvious for $R(\alpha), R_x(\alpha), R_y(\alpha), R_z(\alpha)$ since these matrices describe rotations. Identity (\ref{eq:XMRfromR2}) shows that $\| M(\theta, \phi)\| \leq 1$ by sub-multiplicativity, and it is easy to check that for the unit vector $v = (-\sin(\theta), \cos(\theta),0)^t$ it holds that $\|M(\theta, \phi) v\| = 1$.
\end{proof}

The following easy lemma bounds $\|R_a(\alpha)-R_a({\alphab})\|$ and $\|R(\alpha)-R(\alphab)\|$:

\begin{lem} \label{lem:RaRa}
Let $\epsilon>0$, $|\alpha-\alphab|\leq\varepsilon$ and $a \in \{x,y,z\}$ then
$\|R_a(\alpha)-R_a({\alphab})\|=\|R(\alpha)-R(\alphab)\| < \varepsilon$. 
\end{lem}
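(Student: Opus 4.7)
The plan is to reduce to the case $\bar\alpha = 0$ and then compute the relevant operator norm directly.

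First I would observe that since the matrices $R(\bar\alpha)$ and $R_a(\bar\alpha)$ are orthogonal and satisfy $R(\alpha) = R(\bar\alpha) R(\alpha - \bar\alpha)$ and $R_a(\alpha) = R_a(\bar\alpha) R_a(\alpha - \bar\alpha)$ (by the group law mentioned right after \cref{def:rotation} for $R_a$, and the analogous planar identity for $R$), we can factor out $R(\bar\alpha)$ or $R_a(\bar\alpha)$. Since multiplication by an orthogonal matrix preserves the operator norm, setting $\delta \coloneqq \alpha - \bar\alpha$ we obtain
\[
    \|R(\alpha) - R(\bar\alpha)\| = \|R(\delta) - \id\|, \qquad \|R_a(\alpha) - R_a(\bar\alpha)\| = \|R_a(\delta) - \id\|.
\]

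Next I would compute the norm of $R(\delta) - \id$ explicitly. This matrix equals
\[
    \begin{pmatrix} \cos\delta - 1 & -\sin\delta \\ \sin\delta & \cos\delta - 1 \end{pmatrix},
\]
whose columns are orthogonal and both have squared length $(\cos\delta-1)^2 + \sin^2\delta = 2 - 2\cos\delta = 4\sin^2(\delta/2)$. Hence its operator norm is $2|\sin(\delta/2)|$. For the three-dimensional rotations $R_a(\delta) - \id$, I would note that by the block structure of $R_x, R_y, R_z$ the matrix has a zero row and a zero column with the remaining nontrivial $2 \times 2$ block being exactly (up to sign conventions which do not affect the norm) the matrix above, so again $\|R_a(\delta) - \id\| = 2|\sin(\delta/2)|$. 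This already proves the equality $\|R(\alpha) - R(\bar\alpha)\| = \|R_a(\alpha) - R_a(\bar\alpha)\|$ claimed in the lemma.

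Finally, I would conclude using the elementary inequality $2|\sin(\delta/2)| \leq |\delta|$, with strict inequality when $\delta \neq 0$. If $\delta \neq 0$ this yields $2|\sin(\delta/2)| < |\delta| \leq \varepsilon$; if $\delta = 0$ both norms vanish and are strictly less than the positive $\varepsilon$. Either way the strict bound $< \varepsilon$ holds. There is no real obstacle here — the only mild subtlety is ensuring the inequality is strict (rather than merely $\leq$), which follows either from $\delta = 0$ or from the strict version of $|\sin x| < |x|$ for $x \neq 0$.
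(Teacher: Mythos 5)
Your proof is correct, and it takes a slightly different route from the paper's. For the equality part both arguments are essentially the same block-structure observation (the paper writes out $R_x(\alpha)-R_x(\alphab)$ and notes that the nontrivial $2\times 2$ block is $R(\alpha)-R(\alphab)$, just as you do after reducing to $\delta=\alpha-\alphab$). For the inequality the paper argues geometrically and pointwise: for each $v\in\R^2$ the chord $\|R(\alpha)v-R(\alphab)v\|$ is shorter than the circular arc of length $|\alpha-\alphab|\,\|v\|$, so the norm is bounded without ever being computed. You instead factor out $R(\alphab)$ using the group law and orthogonal invariance, and then compute the operator norm exactly, $\|R(\delta)-\id\|=2|\sin(\delta/2)|$, via the orthogonal equal-length columns, concluding with $|\sin x|<|x|$ for $x\neq 0$. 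The two are quantitatively identical (the arc-versus-chord comparison is precisely $2|\sin(\delta/2)|<|\delta|$), but your version yields the sharp value of the norm as a byproduct, and your explicit treatment of the degenerate case $\delta=0$ (where both norms vanish and $<\varepsilon$ holds because $\varepsilon>0$) is cleaner than the paper's chain of inequalities, whose strict step is vacuous when $\alpha=\alphab$.
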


\begin{proof}
    We show the equality for $a=x$ only; so we calculate 
    \begin{align*}
        \|R_x(\alpha)-R_x({\alphab})\| &= \max_{v_1^2+v_2^2+v_3^2=1}\left\|
    \begin{pmatrix} 
        0 & 0 & 0\\ 
        0 & \cos(\alpha)-\cos(\alphab) & -\sin(\alpha)+\sin(\alphab)\\
        0 & \sin(\alpha)-\sin(\alphab) & \cos(\alpha)-\cos(\alphab)
    \end{pmatrix} 
    \begin{pmatrix} 
        v_1\\
        v_2\\
        v_3
    \end{pmatrix}\right\|\\
    &=\max_{v_2^2+v_3^2=1}\left\|\left(R(\alpha)-R(\alphab)\right)\begin{pmatrix} 
        v_2\\
        v_3
    \end{pmatrix}\right\|\\
    &=\|R(\alpha)-R(\alphab)\|.
    \end{align*}
    For the inequality we first observe that for any $v\in\R^2$ the points $R(\alpha)v$ and $R(\alphab)v$ lie on the circle with radius $\|v\|$. The length of the arc connecting them has length at most $|\alpha-\alphab|\cdot \|v\|$ and is always longer than the straight line connecting them with length $\|R(\alpha)v-R(\alphab)v\|$. Hence we~have  
    \[
        \|R(\alpha)-R(\alphab)\|=\max_{v\in\R^2} \frac{\|(R(\alpha)-R(\alphab))v\|}{\|v\|} < \max_{v\in\R^2} \frac{|\alpha-\alphab|\cdot \|v\|}{\|v\|}=|\alpha-\alphab|\leq \varepsilon. \qedhere
    \]
\end{proof}

In the same way we would like to bound $\|M(\theta, \phi)-M(\thetab,\phib)\|$, $\|X({\theta, \varphi})-X(\thetab,\phib)\|$ as well as $\|R(\alpha) M(\theta, \phi)-R(\alphab)M(\thetab,\phib)\|$ for  $|\theta-\thetab|,|\varphi-\phib|, |\alpha-\alphab|\leq\varepsilon$. It turns out that having sharp bounds for these terms is more complicated and, amongst others ideas, this relies on the following sharp inequality which is itself a consequence of the convexity (for small $x$) of $\cos(\sqrt{x})$ and Jensen's inequality.

\begin{lem} \label{lem:jensen}
    For all $a,b \in \R$ with $|a|,|b|\leq 2$ the following inequality holds:
    \[
        (1+\cos(a))(1+\cos(b))\geq 2+2\cos\Big(\sqrt{a^2+b^2}\Big),
    \]
    with equality only for $a=0$ or $b=0$.
\end{lem}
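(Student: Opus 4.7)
The plan is to reduce the statement to a single application of Jensen's inequality applied to the function $f(x) := \cos(\sqrt{x})$. First I would use the half-angle identities $1 + \cos x = 2\cos^2(x/2)$ and $2 + 2\cos x = 4\cos^2(x/2)$ to rewrite both sides of the inequality as squares of non-negative quantities. Since $|a/2|, |b/2| \leq 1 < \pi/2$ and $\tfrac{1}{2}\sqrt{a^2+b^2} \leq \sqrt{2} < \pi/2$, each of the three cosines $\cos(a/2)$, $\cos(b/2)$, $\cos(\tfrac{1}{2}\sqrt{a^2+b^2})$ is strictly positive, so taking positive square roots shows that the claim is equivalent to
\[
\cos(a/2)\cos(b/2) \;\geq\; \cos\bigl(\tfrac{1}{2}\sqrt{a^2+b^2}\bigr).
\]

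Next I would apply the product-to-sum identity $2\cos u \cos v = \cos(u-v) + \cos(u+v)$ with $u = a/2$, $v = b/2$, and set $p := u-v$, $q := u+v$. Using the elementary identity $(u-v)^2 + (u+v)^2 = 2(u^2+v^2)$, the desired inequality becomes
\[
\frac{\cos p + \cos q}{2} \;\geq\; \cos\Bigl(\sqrt{\tfrac{p^2+q^2}{2}}\Bigr).
\]
Writing $f(x) := \cos(\sqrt{x})$ and $s := p^2$, $t := q^2 \in [0,4]$, this is exactly the Jensen inequality $\tfrac12(f(s) + f(t)) \geq f(\tfrac12(s+t))$ applied to $f$ at the two points $s, t$.

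It therefore suffices to show that $f$ is strictly convex on $[0,4]$. A direct computation gives
\[
f''(x) \;=\; \frac{1}{4x}\!\left(\frac{\sin\sqrt{x}}{\sqrt{x}} - \cos\sqrt{x}\right), \qquad x > 0,
\]
and I would verify that the parenthesised expression is strictly positive in two regimes of $y := \sqrt{x} \in (0, 2]$: for $y \in (0, \pi/2]$ the elementary inequality $\tan y > y$ yields $\sin y / y > \cos y$, while for $y \in (\pi/2, 2]$ one trivially has $\sin y/y > 0 \geq \cos y$. Continuity then extends the strict convexity to the closed interval $[0, 4]$.

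Finally, strict convexity of $f$ turns the Jensen inequality into an equality only when $s = t$, i.e.\ $p^2 = q^2$, i.e.\ $uv = 0$, which is precisely the claimed equality case $a = 0$ or $b = 0$. The only somewhat delicate point is handling convexity across the threshold $\sqrt{x} = \pi/2$, where the naive $\tan y > y$ argument breaks down; but once the two regimes are treated separately as above, the remainder of the proof is routine.
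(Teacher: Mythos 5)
Your proof is correct and follows essentially the same route as the paper's: the same half-angle reduction, the same product-to-sum rewriting, and the same application of Jensen's inequality to the convex function $f(x)=\cos(\sqrt{x})$ on $[0,4]$, with the equality case read off from strict convexity. The only differences are cosmetic (taking positive square roots instead of working with the squared inequality, and spelling out the positivity of $\sin y - y\cos y$ on $(0,2]$ in more detail than the paper does); note only that at $y=\pi/2$ you should invoke the second regime, since $\tan y$ is undefined there.
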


\begin{proof}
    Using $1+ \cos(x) = 2 \cos(x/2)^2$ three times the inequality becomes equivalent to 
    \[
        \cos(a/2)^2 \cos(b/2)^2 \geq \cos(\sqrt{a^2+b^2}/2)^2.
    \]
    Note that without loss of generality we can assume that $0 \leq a, b \leq 2$; let $x=a/2$ and $y=b/2$ so that $0 \leq x,y\leq 1$. Then we need to prove that 
    \[
        \cos(x) \cos(y) \geq \cos(\sqrt{x^2+y^2}),
    \]
    Or, equivalently, 
    \[
        \frac{\cos(x+y) + \cos(x-y)}{2} \geq \cos(\sqrt{x^2+y^2}),
    \]
    as $2 \cos(x) \cos(y) = \cos(x+y) + \cos(x-y)$.
    Now the crucial observation is that $f(t) \coloneqq \cos(\sqrt{t})$ is \emph{convex} for $t \in [0,4]$. Indeed, 
    \[
        f''(t) = \frac{\sin(\sqrt{t}) - \sqrt{t} \cos(\sqrt{t})}{4 \sqrt{t^3}},
    \]
    and $f''(t) \geq 0$ on $t \in [0,4]$ is equivalent to $\sin(s) - s \cos(s) \geq 0$ on $s \in [0,2]$. 
    
    We conclude using Jensen's inequality:
    \begin{align*}
        \frac{\cos(x+y) + \cos(x-y)}{2} = & \frac{f((x+y)^2)+f((x-y)^2)}{2} \\
        & \geq f\left( \frac{(x+y)^2+(x-y)^2}{2} \right) = \cos(\sqrt{x^2+y^2}).
    \end{align*}
    Equality can only happen for $(x + y)^2 = (x - y)^2$, i.e. $xy=0$.
\end{proof}

\begin{lem} \label{lem:RxRy}
For any $\alpha,\beta\in \mathbb{R}$ one has 
\[
    \|R_x(\alpha)R_y(\beta)-\id\| \leq  \sqrt{\alpha^2+\beta^2}
\]
with equality only for $\alpha = \beta = 0$.
\end{lem}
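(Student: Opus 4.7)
The plan is to exploit the fact that $R_x(\alpha)R_y(\beta)$ is orthogonal with determinant $+1$, hence a proper rotation of $\mathbb{R}^3$, and to relate the operator norm $\|R_x(\alpha)R_y(\beta) - \id\|$ to its rotation angle. A direct matrix multiplication using \cref{def:rotation} yields
\[
\operatorname{tr}(R_x(\alpha)R_y(\beta)) = \cos\alpha + \cos\beta + \cos\alpha\cos\beta = (1+\cos\alpha)(1+\cos\beta) - 1.
\]
Write $R \coloneqq R_x(\alpha)R_y(\beta)$ and let $\theta$ be its rotation angle, so that $\operatorname{tr}(R) = 1 + 2\cos\theta$. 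Since $R$ is orthogonal, the matrix $R - \id$ is normal — one checks that both $(R-\id)(R-\id)^T$ and $(R-\id)^T(R-\id)$ equal $2\id - R - R^T$ — so its operator norm is the largest modulus of its eigenvalues $\{0,\,e^{i\theta}-1,\,e^{-i\theta}-1\}$. Hence
\[
\|R - \id\|^2 = |e^{i\theta} - 1|^2 = 2 - 2\cos\theta = 4 - (1+\cos\alpha)(1+\cos\beta).
\]

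This last form is precisely what \cref{lem:jensen} is tailored for, so I would split into two cases. If $\alpha^2 + \beta^2 \leq 4$, then $|\alpha|,|\beta| \leq 2$, and \cref{lem:jensen} gives $(1+\cos\alpha)(1+\cos\beta) \geq 2 + 2\cos\sqrt{\alpha^2+\beta^2}$; combined with the elementary bound $2|\sin(x/2)| \leq |x|$ this yields
\[
\|R - \id\|^2 \leq 2 - 2\cos\sqrt{\alpha^2+\beta^2} = 4\sin^2\!\bigl(\tfrac{1}{2}\sqrt{\alpha^2+\beta^2}\bigr) \leq \alpha^2+\beta^2.
\]
If instead $\alpha^2 + \beta^2 > 4$, the triangle inequality together with the fact that $R$ is orthogonal (so $\|R\| = 1$, cf.\ \cref{lem:RaRalpha}) gives $\|R - \id\| \leq \|R\| + \|\id\| = 2 < \sqrt{\alpha^2+\beta^2}$.

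For the equality characterisation I would trace back when each step is sharp. In the regime $\alpha^2+\beta^2 > 4$ the bound is already strict. In the Jensen regime, $2|\sin(x/2)| \leq |x|$ is strict unless $x = 0$, while \cref{lem:jensen} is strict unless $\alpha = 0$ or $\beta = 0$. A brief check of the three non-trivial cases ($\alpha = 0, \beta \neq 0$; $\alpha \neq 0, \beta = 0$; both nonzero) shows that at least one of the two inequalities is strict whenever $(\alpha,\beta) \neq (0,0)$, forcing $\alpha = \beta = 0$ as the unique equality case.

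The main conceptual obstacle is reducing the $3\times 3$ operator norm $\|R-\id\|$ to a single elementary expression in $\theta$; the normality observation unlocks $\|R-\id\| = 2|\sin(\theta/2)|$, after which the whole argument is purely algebraic and hands the hard estimate off to the already-proven \cref{lem:jensen}.
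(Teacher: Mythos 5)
Your proof is correct, and its heart is the same as the paper's: identify $R=R_x(\alpha)R_y(\beta)$ as a rotation, convert the trace identity $\operatorname{tr}(R)=\cos\alpha+\cos\beta+\cos\alpha\cos\beta=1+2\cos\theta$ into $(1+\cos\alpha)(1+\cos\beta)=2+2\cos\theta$, and hand the key estimate to \cref{lem:jensen}. You differ in two local steps, both sound and arguably a bit cleaner. First, where the paper writes $R=UR_x(\Psi)U^{-1}$ and then only \emph{bounds} $\|R_x(\Psi)-\id\|\leq|\Psi|$ via the arc-length argument of \cref{lem:RaRa}, you compute the norm \emph{exactly}: $R-\id$ is normal (both $(R-\id)(R-\id)^T$ and $(R-\id)^T(R-\id)$ equal $2\id-R-R^T$), so $\|R-\id\|=2|\sin(\theta/2)|$, and the only transcendental input left is $2|\sin(x/2)|\leq|x|$; this also makes the equality discussion transparent, since sharpness is pinned down entirely by \cref{lem:jensen} and the strictness of $\sin t<t$ for $t\neq 0$. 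Second, to reach the regime where \cref{lem:jensen} applies, the paper uses a doubling trick (if the bound holds for $(\alpha,\beta)$ it holds for $(2\alpha,2\beta)$ by the triangle inequality and orthogonality), whereas you simply split off the case $\alpha^2+\beta^2>4$ with the trivial bound $\|R-\id\|\leq 2<\sqrt{\alpha^2+\beta^2}$. Both routes prove exactly the stated inequality together with the equality characterisation; yours trades the paper's reliance on \cref{lem:RaRa} for the (standard, and correctly verified) fact that the operator norm of a normal matrix is its spectral radius.
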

\begin{proof}
    Without loss of generality we can assume $|\alpha|,|\beta|\leq 2$, since if the equation is true for $(\alpha,\beta)$, then it is also true for $(2\alpha,2\beta)$ as
    \begin{align*}
    \|R_x(2\alpha)R_y(2\beta)-\id\| &\leq \|R_x(2\alpha)R_y(2\beta)-R_x(\alpha)R_y(\beta)\|+\|R_x(\alpha)R_y(\beta)-\id\|\\
    &=2\|R_x(\alpha)R_y(\beta)-\id\|,
    \end{align*}
    using the triangle inequality and multiplying by $R_x(-\alpha)$ from the left and $R_y(-\beta)$ from the right. Define $T\coloneqq R_x(\alpha)R_y(\beta)$. As $T$ is the composition of two rotation matrices, it is itself a rotation matrix. Therefore $T$ is of the form 
    \[
        T= UR_x(\Psi)U^{-1}
    \]
    for some orthogonal matrix $U$ and $-\pi \leq \Psi \leq \pi$. We therefore have
    \begin{align*}
    &\text{tr}(R_x(\alpha)R_y(\beta))=\text{tr}(T)=\text{tr}(UR_x(\Psi)U^{-1})=\text{tr}(R_x(\Psi)).
    \end{align*}
It follows that
\[
    \cos(\alpha)+\cos(\beta)+\cos(\alpha)\cos(\beta)=1+2\cos(\Psi),
\]
which is trivially equivalent to $(1+\cos(\alpha))(1+\cos(\beta)) =  2+2\cos(\Psi)$.
As $|\alpha|,|\beta| \leq 2$, we can bound the left-hand-side from below using \cref{lem:jensen}: 
\begin{align*}
    2+2\cos\Big(\sqrt{\alpha^2+\beta^2}\Big)&\leq 2+2\cos(\Psi),
\end{align*}
thus $\cos\Big(\sqrt{\alpha^2+\beta^2}\Big) \leq \cos(\Psi)$. Since $\sqrt{2\cdot 2^2} = \sqrt{8} < 3$ the arguments of the cosines are between $-\pi$ and $\pi$ and we can conclude that
\(
    \sqrt{\alpha^2+\beta^2}\geq |\Psi|.
\)
Combining everything we find
\begin{align*}
  \|R_x(\alpha)R_y(\beta)-\text{Id}\| =\|UR_x(\Psi)U^{-1}-\text{Id}\| =\|R_x(\Psi)-\text{Id}\| =\|R_x(\Psi)-R_x(0)\|
    \leq |\Psi|\leq \sqrt{\alpha^2+\beta^2}
\end{align*}
using \cref{lem:RaRa} in the fourth step. \cref{lem:RaRa} also infers that the inequality in this step is strict except for $\Psi = 0$. In this case, however, the inequality in step five is strict except for $\alpha = \beta = 0$.

\end{proof}

\begin{lem} \label{lem:sqrt2}
Let $\epsilon>0$ and $|\theta-\thetab|,|\varphi-\phib| \leq \varepsilon$ then 
$\|M(\theta, \phi)-M(\thetab,\phib)\|, \|X({\theta, \varphi})-X(\thetab,\phib)\| < \sqrt{2}\varepsilon.$
\end{lem}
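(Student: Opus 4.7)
My plan is to reduce both bounds to a single estimate of the form $\|R_y(a)R_z(b) - \id\|$ and then invoke \cref{lem:RxRy}.

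First, using the identities (\ref{eq:XMRfromR1}) and (\ref{eq:XMRfromR2}), I can write
\[
    M(\theta,\phi) - M(\thetab,\phib) = \begin{pmatrix} 0 & 1 & 0\\ -1 & 0 & 0 \end{pmatrix}\bigl[R_y(\varphi)R_z(-\theta) - R_y(\phib)R_z(-\thetab)\bigr]
\]
and similarly for $X(\theta,\varphi) - X(\thetab,\phib)$ with prefactor $(0,0,1)$. Both prefactors are the restriction of an orthogonal matrix to two (resp.\ one) rows, so each has operator norm $1$. By sub-multiplicativity it therefore suffices to bound $\|R_y(\varphi)R_z(-\theta) - R_y(\phib)R_z(-\thetab)\|$ by $\sqrt{2}\varepsilon$.

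Next I would factor this difference using the group property of the rotations, writing $R_y(\varphi) = R_y(\phib)R_y(\varphi - \phib)$ and $R_z(-\theta) = R_z(\thetab - \theta)R_z(-\thetab)$, which gives
\[
    R_y(\varphi)R_z(-\theta) - R_y(\phib)R_z(-\thetab) = R_y(\phib)\bigl[R_y(a)R_z(b) - \id\bigr]R_z(-\thetab),
\]
where $a \coloneqq \varphi - \phib$ and $b \coloneqq \thetab - \theta$ satisfy $|a|,|b| \leq \varepsilon$. Since multiplication by the orthogonal matrices $R_y(\phib)$ and $R_z(-\thetab)$ preserves the operator norm, the whole problem reduces to showing
\[
    \|R_y(a)R_z(b) - \id\| \leq \sqrt{a^2+b^2} \leq \sqrt{2}\varepsilon.
\]

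The only subtlety is that \cref{lem:RxRy} is stated for the pair $R_x, R_y$ rather than $R_y, R_z$. The cleanest fix is a similarity: there is a coordinate-permutation orthogonal matrix $P$ with $P R_y(a) P^{-1} = R_x(a)$ and $P R_z(b) P^{-1} = R_y(b)$, so $\|R_y(a)R_z(b)-\id\| = \|R_x(a)R_y(b)-\id\|$, and \cref{lem:RxRy} applies verbatim. To conclude with the \emph{strict} inequality claimed: if $(a,b) \neq (0,0)$, \cref{lem:RxRy} already gives strict inequality $\|R_y(a)R_z(b)-\id\| < \sqrt{a^2+b^2} \leq \sqrt{2}\varepsilon$; if $a=b=0$, the left side is $0$ while the right side is $\sqrt{2}\varepsilon > 0$ by assumption. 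The main (mild) obstacle is simply noticing this similarity step, together with the bookkeeping that the bound $\sqrt{a^2+b^2}$ equals $\sqrt{2}\varepsilon$ only at the boundary case $|a|=|b|=\varepsilon$, which still yields strict inequality via \cref{lem:RxRy}.
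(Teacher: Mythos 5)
Your proof is correct and follows essentially the same route as the paper: both reduce the claim via the factorizations (\ref{eq:XMRfromR2}) and (\ref{eq:XMRfromR1}), sub-multiplicativity and orthogonal invariance of the norm to a bound on $\|R_y(a)R_z(b)-\id\|$, and then invoke \cref{lem:RxRy} with the same case distinction to obtain strictness. Your explicit axis-relabeling step is a point the paper leaves implicit; just note that with the paper's sign convention for $R_y$ a pure coordinate permutation conjugates $R_y(a)R_z(b)$ to $R_x(-a)R_y(-b)$ rather than $R_x(a)R_y(b)$, so one should either use a signed permutation or simply observe that the bound $\sqrt{a^2+b^2}$ is insensitive to the sign flip -- either way the argument goes through unchanged.
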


\begin{proof}
    For $(\theta, \phi) = (\thetab, \phib)$ the result is trivial since $\epsilon>0$. In the other case,
    recall the identity (\ref{eq:XMRfromR2})
    \[
    M(\theta, \phi) =
        \begin{pmatrix} 
            0 & 1 & 0\\ 
            -1 & 0 & 0\\
        \end{pmatrix}
        \cdot R_y(\varphi) \cdot R_z(-\theta),
    \]
    and denote by $Z$ the $2 \times 3$ matrix on the rind-hand side.
    It follows that
    \begin{align*}
    \|M(\theta, \phi)-M(\thetab,\phib)\| \leq \|Z\|\cdot \|R_y(\varphi) \cdot R_z(-\theta)- R_y(\phib) \cdot R_z(-\thetab)\|
    \end{align*}
    from the sub-multiplicativity of the norm. Using $\|Z\|=1$ and that $R_y(-\varphi), R_z(\theta)$ are orthogonal we obtain
\begin{align*}
    \|M(\theta, \phi)-M(\thetab,\phib)\| \leq \|\text{Id}-R_y(\phib-\varphi)R_z(\theta-\thetab)\|. 
\end{align*}
The right-hand side is strictly smaller than $\sqrt{(\theta-\thetab)^2 + (\varphi-\phib)^2} \leq \sqrt{2}\epsilon$ by \cref{lem:RxRy} using that $(\theta, \phi) \neq (\thetab, \phib)$. 

    The proof for $\|X({\theta, \varphi})-X(\thetab,\phib)\|$ is analogous using the corresponding identity (\ref{eq:XMRfromR1})    for $X({\theta, \varphi})$.
\end{proof}

The following two statements are easy applications of this lemma:
\begin{lem} \label{lem:XP>0}
    Let $P \in \R^3$ with $\|P\| \leq 1$. Further, let $\epsilon>0$ and $\thetab,\phib, \theta, \phi \in \R$ such that $|\thetab-\theta|, |\phib - \phi| \leq \epsilon$. If  
    \(
        \langle X(\thetab,\phib),P \rangle>\sqrt{2}\varepsilon
    \)
    then
    \(
        \langle X(\theta, \phi),P \rangle>0.
    \)
\end{lem}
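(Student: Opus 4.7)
The plan is to apply Lemma \ref{lem:sqrt2} together with the Cauchy--Schwarz inequality. Specifically, I would write
\[
\langle X(\theta,\varphi), P\rangle = \langle X(\thetab,\phib), P\rangle + \langle X(\theta,\varphi) - X(\thetab,\phib), P\rangle,
\]
and then bound the second term from below. By Cauchy--Schwarz,
\[
|\langle X(\theta,\varphi) - X(\thetab,\phib), P\rangle| \leq \|X(\theta,\varphi) - X(\thetab,\phib)\| \cdot \|P\|.
\]
The hypothesis $\|P\|\leq 1$ together with Lemma \ref{lem:sqrt2} (applied to the bound on $\|X(\theta,\varphi) - X(\thetab,\phib)\|$) yields that this absolute value is strictly less than $\sqrt{2}\,\epsilon$.

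Combining this with the assumption $\langle X(\thetab,\phib), P\rangle > \sqrt{2}\,\epsilon$ gives
\[
\langle X(\theta,\varphi), P\rangle > \sqrt{2}\,\epsilon - \sqrt{2}\,\epsilon = 0,
\]
which is exactly the desired conclusion. There is no real obstacle here; the only small subtlety is that Lemma \ref{lem:sqrt2} provides a strict inequality, which is what allows us to accommodate the (non-strict) bound $\|P\|\leq 1$ and still obtain a strict inequality at the end.
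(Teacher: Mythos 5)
Your proof is correct and is essentially the paper's own argument: Cauchy--Schwarz combined with \cref{lem:sqrt2} and $\|P\|\leq 1$ to bound the perturbation term by $\sqrt{2}\epsilon$, then the strict hypothesis $\langle X(\thetab,\phib),P\rangle>\sqrt{2}\epsilon$ finishes it. (Your closing remark about needing strictness from \cref{lem:sqrt2} is not even essential, since the hypothesis is already a strict inequality.)
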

\begin{proof}
    It follows from the Cauchy-Schwarz inequality that 
    \begin{align*}
     \langle X(\theta, \phi), P \rangle \geq \langle X(\thetab,\phib), P \rangle - \| X(\thetab,\phib) - X(\theta, \phi) \| \cdot \| P \|.
    \end{align*}
    The right-hand side is larger than $\sqrt{2}\epsilon - \sqrt{2}\epsilon \cdot 1 = 0$ by \cref{lem:sqrt2} and using that $\|P\| \leq 1$.
\end{proof}
\begin{lem} \label{lem:MP>r}
    Let $P \in \R^3$ with $\|P\| \leq 1$. Further, let $\epsilon, r>0$ and $\thetab,\phib, \theta, \phi \in \R$ such that $|\thetab-\theta|, |\phib - \phi| \leq \epsilon$. If  
    \(
        \| M(\thetab,\phib) P \| > r + \sqrt{2}\varepsilon
    \)
    then
    \(
        \| M(\theta,\phi) P \| > r.
    \)
\end{lem}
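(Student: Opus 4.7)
The plan is to mimic the proof of \cref{lem:XP>0} almost verbatim, replacing the Cauchy--Schwarz inequality with the reverse triangle inequality for the Euclidean norm. The idea is that the quantity $\|M(\theta,\phi)P\|$ differs from $\|M(\thetab,\phib)P\|$ by at most the operator norm bound on $M(\theta,\phi)-M(\thetab,\phib)$ times $\|P\|$, and we already have a sharp control on the former from \cref{lem:sqrt2}.

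More precisely, I would start from the reverse triangle inequality applied to the vectors $M(\theta,\phi)P$ and $M(\thetab,\phib)P$ in $\R^2$:
\[
    \|M(\theta,\phi)P\| \geq \|M(\thetab,\phib)P\| - \|M(\thetab,\phib)P - M(\theta,\phi)P\|.
\]
Then I would bound the second term by submultiplicativity of the operator norm, writing
\[
    \|M(\thetab,\phib)P - M(\theta,\phi)P\| = \|(M(\thetab,\phib) - M(\theta,\phi))P\| \leq \|M(\thetab,\phib)-M(\theta,\phi)\| \cdot \|P\|.
\]
Invoking \cref{lem:sqrt2} on the matrix norm and the assumption $\|P\|\leq 1$, this is strictly less than $\sqrt{2}\varepsilon$.

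Combining the two inequalities with the hypothesis $\|M(\thetab,\phib)P\| > r + \sqrt{2}\varepsilon$ gives
\[
    \|M(\theta,\phi)P\| > (r + \sqrt{2}\varepsilon) - \sqrt{2}\varepsilon = r,
\]
which is the desired conclusion. There is no real obstacle here; the only subtlety is that \cref{lem:sqrt2} requires $(\theta,\phi)\neq(\thetab,\phib)$ for the strict inequality, but in the degenerate case $(\theta,\phi)=(\thetab,\phib)$ the statement is trivial since then $\|M(\theta,\phi)P\| = \|M(\thetab,\phib)P\| > r + \sqrt{2}\varepsilon > r$.
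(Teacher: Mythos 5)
Your proof is correct and follows essentially the same route as the paper: the reverse triangle inequality combined with \cref{lem:sqrt2} and $\|P\|\leq 1$. The separate treatment of the degenerate case $(\theta,\phi)=(\thetab,\phib)$ is harmless but unnecessary, since the strict bound in \cref{lem:sqrt2} is stated (and proved) to hold in that case as well.
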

\begin{proof}
    It follows from the triangle inequality that 
    \[
        \| M(\theta, \phi) P \| \geq \| M(\thetab, \phib) P \| - \| M(\theta, \phi) - M(\thetab, \phib)\| \|P\|,
    \]
    then the right-hand side is larger than $r + \sqrt{2} \epsilon - \sqrt{2}\epsilon \cdot 1 = r$ by \cref{lem:sqrt2} and using that $\|P\| \leq 1$.
\end{proof}

Now let us prove a version of \cref{lem:sqrt2} when a rotation $R(\alpha)$ is also involved:

\begin{lem} \label{lem:sqrt5}
    Let $\epsilon>0$ and $|\theta-\thetab|,|\varphi-\phib|,|\alpha-\alphab|\leq\varepsilon$ then
    $\|R(\alpha) M(\theta, \phi)-R(\alphab)M(\thetab,\phib)\| < \sqrt{5} \varepsilon.$
\end{lem}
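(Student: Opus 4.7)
The plan is to bound the left-hand side by the integral of the tangent norm along the straight line in parameter space from $(\alphab,\thetab,\phib)$ to $(\alpha,\theta,\phi)$. Set $a=\alpha-\alphab$, $b=\theta-\thetab$, $c=\phi-\phib$, each of absolute value at most $\varepsilon$, and introduce the path $\gamma(t):=R(\alphab+ta)\,M(\thetab+tb,\phib+tc)$ for $t\in[0,1]$. Since $\gamma(1)-\gamma(0)=\int_0^1\gamma'(t)\,\dd t$, it suffices to prove that $\int_0^1\|\gamma'(t)\|\,\dd t<\sqrt{5}\,\varepsilon$.

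First I would differentiate $\gamma$ and use the identity $R'(\alpha)=R(\alpha)R(\pi/2)$ to factor the isometry $R(\alphab+ta)$ out of $\gamma'(t)$, obtaining $\|\gamma'(t)\|=\|N(t)\|$ with
\[
N(t):=a\,R(\pi/2)\,M(\theta_t,\phi_t)+b\,\partial_\theta M(\theta_t,\phi_t)+c\,\partial_\phi M(\theta_t,\phi_t),
\]
a $2\times 3$ matrix where $\theta_t=\thetab+tb$ and $\phi_t=\phib+tc$. The core calculation is to form the symmetric $2\times 2$ matrix $N(t)N(t)^t$ and read off its largest eigenvalue. Direct expansion uses only the building blocks $MM^t=\id$ (so also $R(\pi/2)MM^tR(\pi/2)^t=\id$), $(\partial_\theta M)(\partial_\theta M)^t=\mathrm{diag}(1,\cos^2\phi_t)$, $(\partial_\phi M)(\partial_\phi M)^t=\mathrm{diag}(0,1)$, together with the cross products $R(\pi/2)M(\partial_\theta M)^t=-\cos\phi_t\cdot\id$, $R(\pi/2)M(\partial_\phi M)^t=0$ and $(\partial_\theta M)(\partial_\phi M)^t+(\partial_\phi M)(\partial_\theta M)^t=\begin{pmatrix}0&-\sin\phi_t\\-\sin\phi_t&0\end{pmatrix}$. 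After substitution the eigenvalues of $N(t)N(t)^t$ have a closed form, and the larger one is
\[
\|N(t)\|^2=a^2+b^2-2ab\cos\phi_t+c^2.
\]

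The final step is the elementary estimate $-2ab\cos\phi_t\leq 2|ab|\leq a^2+b^2$, giving $\|N(t)\|^2\leq 2a^2+2b^2+c^2\leq 5\varepsilon^2$. Hence $\|\gamma'(t)\|\leq\sqrt{5}\,\varepsilon$ on $[0,1]$ and integrating yields the non-strict bound. For strict inequality I would argue by cases: equality throughout the chain forces $|a|=|b|=|c|=\varepsilon$ together with $\cos\phi_t\in\{-1,+1\}$, and since $\phi_t=\phib+tc$ is non-constant whenever $|c|=\varepsilon>0$, the latter can hold at only finitely many $t$; if instead $c=0$ then $\|N(t)\|^2\leq(|a|+|b|)^2\leq 4\varepsilon^2$; and if $a=b=c=0$ the conclusion is immediate. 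I expect the main obstacle to be the bookkeeping that collapses $N(t)N(t)^t$ into the clean quadratic form above; once the $ac$-cross term is seen to vanish and the $ab$-coupling to carry precisely the factor $\cos\phi_t$, the $\sqrt{5}\,\varepsilon$ bound follows immediately.
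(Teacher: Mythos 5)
Your proposal is correct, and it takes a genuinely different route from the paper. The paper factors $R(\alpha)M(\theta,\phi)$ into a product of three-dimensional rotations via identity (\ref{eq:XMRfromR3}), reduces the difference to $\|R_z(\alpha-\alphab)R_y(\phi)-R_y(\phib)R_z(\theta-\thetab)\|$, and then splits this with a cleverly chosen intermediate angle $\Phi$ (a weighted mean of $\phi$ and $\phib$) so that two applications of \cref{lem:RxRy} — whose proof rests on the trace identity for rotations and Jensen's inequality (\cref{lem:jensen}) — yield exactly $\sqrt{(|\alpha-\alphab|+|\theta-\thetab|)^2+|\phi-\phib|^2}\leq\sqrt{5}\,\varepsilon$, with strictness inherited from \cref{lem:RaRa} and \cref{lem:RxRy}. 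You instead integrate the derivative along the straight segment in parameter space and compute the operator norm of the tangent exactly: your building-block identities ($MM^t=\id$, $R(\pi/2)M(\partial_\theta M)^t=-\cos\phi_t\,\id$, $R(\pi/2)M(\partial_\phi M)^t=0$, etc.) all check out, and the eigenvalue computation of $N(t)N(t)^t$ indeed gives $\|N(t)\|^2=a^2+b^2+c^2-2ab\cos\phi_t$, whose supremum reproduces the same bound $\sqrt{(|a|+|b|)^2+c^2}\leq\sqrt{5}\,\varepsilon$. Your approach is self-contained (it bypasses \cref{lem:jensen} and \cref{lem:RxRy} entirely) and more informative, since it identifies the exact pointwise norm of the differential and hence where the worst case sits; the price is heavier matrix bookkeeping, whereas the paper reuses machinery it needs anyway for \cref{lem:sqrt2}. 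Your strictness argument is also sound: equality would force $|a|=|b|=|c|=\varepsilon$ and $|\cos\phi_t|=1$ for almost every $t$, impossible for the nonconstant affine $\phi_t$; the additional cases $c=0$ and $a=b=c=0$ you list are redundant (they already contradict $|c|=\varepsilon>0$) but harmless.
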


\begin{proof}
    Recall the identity (\ref{eq:XMRfromR3})
    \[
    R(\alpha) M(\theta, \phi)=
       \begin{pmatrix} 
            0 & 1 & 0\\ 
            -1 & 0 & 0\\
        \end{pmatrix}
        R_z(\alpha) \cdot R_y(\varphi) \cdot R_z(-\theta)
    \] 
    to obtain like in the proof of \cref{lem:sqrt2}:
    \begin{align*}
        \|R(\alpha) M(\theta, \phi)-R(\alphab)M(\thetab,\phib)\| \leq \|R_z(\alpha-\alphab) \cdot R_y(\varphi) - R_y(\phib) \cdot R_z(\theta-\thetab)\|
    \end{align*}
    using the sub-multiplicativity of the norm.

    Now define   
    \[
        \Phi = \frac{\varphi\cdot |\theta-\thetab| +\phib\cdot |\alpha-\alphab|}{|\alpha-\alphab|+|\theta-\thetab|},
    \]
    and obtain further using the triangle inequality and \cref{lem:RxRy}:
    \begin{align*}
        \|R(\alpha) M(\theta, \phi)-R(\alphab)M(\thetab,\phib)\| 
        &\leq \|R_z(\alpha-\alphab) \cdot R_y(\varphi) - R_y(\Phi)\|+ \|R_y(\Phi)-R_y(\phib) \cdot R_z(\theta-\thetab)\|\\
        &= \|R_z(\alpha-\alphab) \cdot R_y(\varphi-\Phi) - \text{Id}\|+ \|\text{Id}-R_y(\phib-\Phi) \cdot R_z(\theta-\thetab)\|\\
        &\leq \sqrt{(\alpha-\alphab)^2+(\varphi-\Phi)^2}+\sqrt{(\theta-\thetab)^2+(\phib-\Phi)^2}.
    \end{align*}
    Note that also by \cref{lem:RxRy} this inequality is strict except if both sides vanish.
    The definition of $\Phi$ conveniently implies after a small computation that 
    \[
        \sqrt{(\alpha-\alphab)^2+(\varphi-\Phi)^2}+\sqrt{(\theta-\thetab)^2+(\phib-\Phi)^2} = \sqrt{\left(|\alpha - \alphab|+|\theta - \thetab|\right)^2 + |\phi-\phib|^2} \leq \sqrt{5}\varepsilon,
    \]
    which concludes the proof.
\end{proof}

\section{The global theorem} \label{sec:global_thm}

Recall from \cref{def:rupert} the definition of Rupert's property and the corresponding condition (\ref{eq:rupert_condition})
\[
    R(\alpha) M(\theta_1, \phi_1)\PPP \subset (M(\theta_2, \phi_2)\PPP)^\circ.
\]
It is clear that if $\Psi = (\theta_1,\varphi_1,\theta_2,\varphi_2,\alpha) \in \R^5$ is a solution to Rupert's problem then in particular, since~(\ref{eq:rupert_condition}) is satisfied, for every vertex $S \in \PPP$ we must have that $R(\alpha) M(\theta_1, \phi_1)S$ lies inside the convex hull of $M(\theta_2, \phi_2)\PPP$. This, in turn, intuitively implies that for any vector $w \in \R^2$ it holds~that
\[
    \langle R(\alpha) M(\theta_1, \phi_1)S,w \rangle \leq \max_{P \in \PPP} \langle M(\theta_2, \phi_2)P ,w\rangle.
\]
We prove this intuitive fact in \cref{lem:hullscalarprod}. Simply rephrasing this observation, we can say that if there exists a vertex $S \in \PPP$ and a vector $w \in \R^2$ such that 
\[
    \langle R(\alpha) M(\theta_1, \phi_1)S ,w\rangle > \max_{P \in \PPP} \langle M(\theta_2, \phi_2)P ,w\rangle,
\]
then $(\theta_1,\varphi_1,\theta_2,\varphi_2,\alpha) \in \R^5$ cannot be a solution to Rupert's problem. This basic idea is crucial, as it allows us not only to reject $(\theta_1,\varphi_1,\theta_2,\varphi_2,\alpha)$ as a possible solution but, often in practice, also a whole \emph{region} close to this point.  For instance, a direct application of \cref{lem:sqrt2} and \cref{lem:sqrt5} shows that if 
\[
    \langle R(\alphab) M(\thetab_1, \phib_1)S,w \rangle > \max_{P \in \PPP} \langle M(\thetab_2, \phib_2)P,w \rangle + (\sqrt{2} + \sqrt{5}) \rho \epsilon
\]
for some pointsymmetric convex $\PPP$ of radius $\rho$, some $S \in \PPP$ and $w \in \R^2$ with $\|w\|=1$, then there cannot be a solution $(\theta_1,\varphi_1,\theta_2,\varphi_2,\alpha) \in [\thetab_1\pm\epsilon,\phib_1\pm\epsilon,\thetab_2\pm\epsilon,\phib_2\pm\epsilon,\alphab\pm\epsilon] $. We will prove a stronger version taking into account the derivatives $M^\theta, M^\phi, R'$ (see \cref{sec:derivatives} for precise definitions) at the point $(\thetab_1,\phib_1,\thetab_2,\phib_2,\alphab)$:

\begin{thm}[Global Theorem] \label{thm:global}
    Let $\PPP$ be a pointsymmetric convex polyhedron with radius $\rho =1$ and let $S \in \PPP$. Further let $\thetab_1,\phib_1,\thetab_2,\phib_2,\alphab \in \R$ and let $w\in\R^2$ be a unit vector. Denote $\Mib \coloneqq M(\thetab_1, \phib_1)$, $ \Miib \coloneqq M(\thetab_2, \phib_2)$ as well as $\Mib^{\theta} \coloneqq M^\theta(\thetab_1, \phib_1)$, $\Mib^{\phi} \coloneqq M^\phi(\thetab_1, \phib_1)$ and analogously for $\Miib^{\theta}, \Miib^{\phi}$. Finally set
    \begin{align*}
        G& \coloneqq \langle R(\alphab) \Mib S,w \rangle - \epsilon\cdot\big(|\langle R'(\alphab)  \Mib S,w \rangle|+|\langle R(\alphab) \Mib^\theta S,w \rangle|+|\langle R(\alphab) \Mib^\phi S,w \rangle|\big)- 9\epsilon^2/2,\\
        H_P & \coloneqq \langle \Miib P,w \rangle + \epsilon\cdot\big(|\langle \Miib^\theta P,w \rangle|+|\langle  \Miib^\varphi P,w \rangle|\big) + 2\epsilon^2, \quad \text{ for } P \in \PPP.
    \end{align*}
    If $G>\max_{P\in \PPP} H_P$ then there does not exist a solution to Rupert's condition (\ref{eq:rupert_condition}) with 
    $$(\theta_1,\varphi_1,\theta_2,\varphi_2,\alpha) \in U \coloneqq [\thetab_1\pm\epsilon,\phib_1\pm\epsilon,\thetab_2\pm\epsilon,\phib_2\pm\epsilon,\alphab\pm\epsilon] \subseteq \R^5.$$
\end{thm}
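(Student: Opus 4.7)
The plan is to argue by contradiction. I would first suppose that $(\theta_1,\varphi_1,\theta_2,\varphi_2,\alpha)\in U$ is a solution to Rupert's condition (\ref{eq:rupert_condition}). Since $S\in\PPP$, the point $R(\alpha)M(\theta_1,\varphi_1)S$ lies in the convex hull of $M(\theta_2,\varphi_2)\PPP$, and a standard convex-hull fact (the \cref{lem:hullscalarprod} alluded to in the paragraphs above) then gives, for any unit vector $w$,
\[
\langle R(\alpha)M(\theta_1,\varphi_1)S,w\rangle \;\leq\; \max_{P\in\PPP}\langle M(\theta_2,\varphi_2)P,w\rangle.
\]
The goal therefore reduces to establishing the two bounds $\langle R(\alpha)M(\theta_1,\varphi_1)S,w\rangle\geq G$ and $\langle M(\theta_2,\varphi_2)P,w\rangle\leq H_P$ for every $P\in\PPP$, which together with the hypothesis $G>\max_P H_P$ yield a contradiction.

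For the first bound I would apply Taylor's formula with integral remainder to the smooth map $f(\alpha,\theta,\varphi)\coloneqq\langle R(\alpha)M(\theta,\varphi)S,w\rangle$ at the base point $(\alphab,\thetab_1,\phib_1)$. The first-order Taylor part reproduces precisely the three gradient terms featuring in $G$; since each coordinate difference has magnitude at most $\epsilon$, the linear correction exceeds $-\epsilon$ times the sum of the absolute values of those gradient terms. The quadratic remainder has the form $\sum_i \Delta_i \int_0^1 [\partial_i f(\bar{\cdot}+t\Delta)-\partial_i f(\bar{\cdot})]\,dt$, and each integrand can be upper-bounded by the operator-norm drift of the corresponding matrix (since $\|S\|\|w\|\leq 1$). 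Using the identity $R'(\alpha)=R(\alpha+\pi/2)$, \cref{lem:sqrt5} gives drift $\leq\sqrt{5}\,t\epsilon$ for the $\alpha$-partial; for the $\theta$- and $\varphi$-partials the same bound follows by factoring $R(\alpha)M^\theta$ and $R(\alpha)M^\varphi$ (via (\ref{eq:XMRfromR2}) and the observation that $R(\alpha)$ commutes through the top-left $2\times 2$ block of the signed-permutation factor) into the shape to which \cref{lem:sqrt5} directly applies. Integrating and summing over the three directions then gives a remainder of at most $3\cdot\epsilon\cdot\sqrt{5}\epsilon/2 = 3\sqrt{5}\epsilon^2/2 < 9\epsilon^2/2$, which confirms $f\geq G$.

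The upper bound on $\langle M(\theta_2,\varphi_2)P,w\rangle$ is entirely analogous, applied to the two-variable function $g(\theta,\varphi)\coloneqq\langle M(\theta,\varphi)P,w\rangle$ around $(\thetab_2,\phib_2)$. Here the first-order contribution matches the gradient terms of $H_P$, and the drift of each of the two partials is controlled by \cref{lem:sqrt2}, yielding $\sqrt{2}\,t\epsilon$, so the quadratic remainder is at most $2\cdot\epsilon\cdot\sqrt{2}\epsilon/2 = \sqrt{2}\epsilon^2 < 2\epsilon^2$. In both arguments the hypotheses $\|P\|\leq 1$ (from $\rho=1$) and $\|w\|=1$ are essential in passing from operator-norm drift on matrices to scalar drift on inner products. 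The step I expect to be the main obstacle is the careful justification that \cref{lem:sqrt2} and \cref{lem:sqrt5} transfer from $M$ and $RM$ to their partial derivatives $M^\theta,M^\varphi,R'M$; once this orthogonal-factor algebra is pinned down, the rest of the proof is routine bookkeeping.
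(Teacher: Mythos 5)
Your overall skeleton coincides with the paper's: argue by contradiction, use \cref{lem:hullscalarprod} to get $\langle R(\alpha)M(\theta_1,\varphi_1)S,w\rangle \leq \max_{P\in\PPP}\langle M(\theta_2,\varphi_2)P,w\rangle$, and then squeeze this between $G$ and $\max_P H_P$. Where you genuinely diverge is the second-order bookkeeping. The paper feeds $f(x_1,x_2,x_3)=\langle R(x_3)M(x_1,x_2)S,w\rangle$ (and the two-variable analogue for $H_P$) into \cref{lem:n2}, whose hypothesis -- all mixed second partials bounded by $1$, supplied by \cref{lem:leq1} via a one-line Cauchy--Schwarz after differentiating the factored form (\ref{eq:XMRfromR3}) -- yields the remainder $n^2\varepsilon^2/2$, i.e.\ exactly the constants $9\varepsilon^2/2$ and $2\varepsilon^2$ in the statement. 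You instead use a first-order Taylor formula with integral remainder and control the Lipschitz drift of the first partials through the matrix-norm lemmas \cref{lem:sqrt2} and \cref{lem:sqrt5}, which gives the sharper constants $3\sqrt{5}\,\varepsilon^2/2$ and $\sqrt{2}\,\varepsilon^2$. The paper's route is more modular and avoids any factoring of the derivative matrices; yours buys smaller error terms at the price of the ``orthogonal-factor algebra'' you yourself flag as the delicate point.

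On that delicate point: the factoring works verbatim for the $\theta$-partial, since $M^{\theta}(\theta,\varphi)=-M(\theta-\pi/2,\varphi)\,\mathrm{diag}(1,1,0)$ puts the constant factor on the right, so $\|R(\alpha)M^{\theta}(\theta,\varphi)-R(\alphab)M^{\theta}(\thetab,\phib)\|$ is bounded by \cref{lem:sqrt5} directly. For the $\varphi$-partial it does not: $M^{\varphi}(\theta,\varphi)=\mathrm{diag}(0,1)\,M(\theta,\varphi+\pi/2)$ (equivalently $R_y'(\varphi)=R_y(\varphi+\pi/2)\,\mathrm{diag}(1,0,1)$, with the projector landing between $R_y$ and $R_z(-\theta)$), and $\mathrm{diag}(0,1)$ does not commute with $R(\alpha)$, so $R(\alpha)M^{\varphi}$ is not of the shape covered by \cref{lem:sqrt5}. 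This is a gap in the step as literally stated, but it is harmless: splitting off the $\alpha$-drift by the triangle inequality and using \cref{lem:RaRa} together with \cref{lem:sqrt2} gives drift at most $(1+\sqrt{2})t\varepsilon$ for this partial, so the total remainder is at most $\bigl(2\sqrt{5}+1+\sqrt{2}\bigr)\varepsilon^2/2\approx 3.45\,\varepsilon^2<9\varepsilon^2/2$; and in the two-variable bound for $H_P$ there is no $R(\alpha)$, so your $\sqrt{2}\,\varepsilon^2$ estimate stands as is. With that one patch your argument is correct and in fact slightly sharper than the paper's.
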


\begin{example}
    Consider the Octahedron $\OOO = \{ (\pm 1, 0,0), (0, \pm1,0),(0,0,\pm1) \} \subseteq \R^3$ and two projection directions $(\thetab_1, \phib_1) = (0,0)$ and $(\thetab_2, \phib_2) = (\pi/4,\tan^{-1}(\sqrt{2}))$. 
    We wish to apply \cref{thm:global} to show that with $\alphab=0$ this choice of parameters does not yield a solution to Rupert's problem for $\OOO$ and, moreover, that there exists $\epsilon>0$ such that there is also no solution $(\theta_1,\varphi_1,\theta_2,\varphi_2,\alpha)$ with $|\thetab_i - \theta_i|,|\phib_i - \phi_i|, |\alpha| \leq \epsilon$ either. 
    In other words, the red projection in Figure~\ref{fig:octahedron_glob} is not inside the black one, even after perturbing the projection parameters by some small $\epsilon>0$.
    
    \begin{figure}[htb]
        \centering
        \includegraphics[width=1\linewidth]{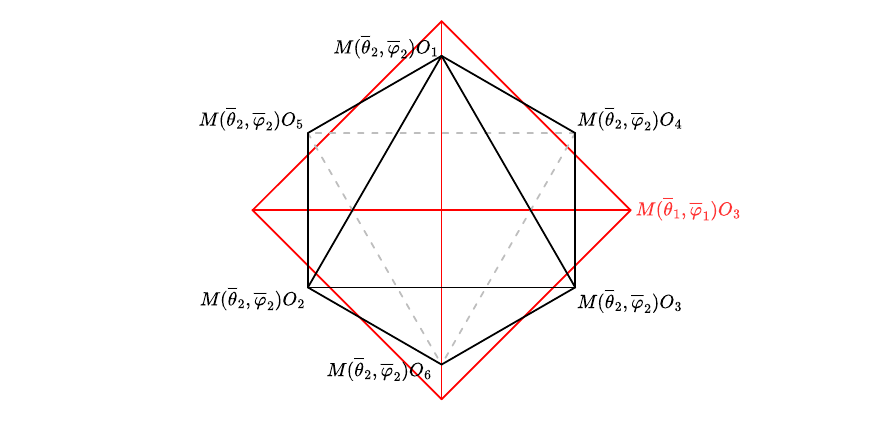}
        \caption{Two projections of the Octahedron: one in direction $(\thetab_1, \phib_1) = (0,0)$ (in red) and one in direction $(\thetab_2, \phib_2) = (\pi/4, \tan^{-1}(\sqrt{2})$ (in black).}
        \label{fig:octahedron_glob}
    \end{figure}
    
    Following \cref{thm:global} we first compute:
    \[
     \Mib =  \begin{pmatrix}
    0 & 1 & 0 \\
    -1 & 0 & 0
\end{pmatrix}  \quad \text{ and } \quad \Miib = \begin{pmatrix}
    -\sqrt{2}/2 & \sqrt{2}/2 & 0 \\
    -\sqrt{6}/6 & -\sqrt{6}/6 & \sqrt{6}/3
\end{pmatrix}.
    \]
    Moreover, for the derivatives we find
    \[
        \Mib^\theta =  
\begin{pmatrix}
    -1 & 0 & 0 \\
    0 & -1 & 0
\end{pmatrix}, \quad \Mib^\phi = 
\begin{pmatrix}
    0 & 0 & 0 \\
    0 & 0 & 1
\end{pmatrix}, \quad \text{ as well as}
    \]
    \[
        \Miib^\theta =  
\begin{pmatrix}
    -\sqrt{2}/2 & -\sqrt{2}/2 & 0 \\
    \sqrt{6}/6 & -\sqrt{6}/6 & 0
\end{pmatrix}  \quad \text{ and } \quad \Miib^\phi = 
\begin{pmatrix}
    0 & 0 & 0 \\
    \sqrt{3}/3 & \sqrt{3}/3 & \sqrt{3}/3
\end{pmatrix}.
    \]
    Further note that $R(0) = \id$ and $R'(0) = R(\pi/2)$. We choose $S = O_3 = (0,1,0)$ the preimage of the vertex on the very right in Figure~\ref{fig:octahedron_glob} and $w = (1,0)$ the direction of $M(\thetab_1,\phib_1) S$.
    
    After a straightforward calculation we obtain 
    \[
        G = 1 - 9\epsilon^2/2
    \]
    and
    \[
        H_P = 
        \begin{cases}
            2 \epsilon ^2 \quad & \text{ for } P = M(\thetab_2, \phib_2) O_i \; \text{ with } \; i=1,6, \\
            \sqrt{2}/2 + \epsilon \sqrt{2}/2 + 2 \epsilon^2 \quad & \text{ for } P = M(\thetab_2, \phib_2) O_i \; \text{ with } \; i=3,4,\\
            -\sqrt{2}/2 + \epsilon \sqrt{2}/2 + 2 \epsilon^2 \quad & \text{ for } P = M(\thetab_2, \phib_2) O_i \; \text{ with } \; i=2,5.
        \end{cases}
    \]
    It is then obvious that $\max H_P$ is $\sqrt{2}/2 + \epsilon \sqrt{2}/2 + 2 \epsilon^2$ and that $G = 1 - 9\epsilon^2/2 > \max H_P$ for small $\epsilon$. Specifically, if $\epsilon < 0.164$ then \cref{thm:global} can be applied and a solution in the region described above can be ruled out.

    We note that the strategy of the proof below --  applied to the current example -- is to show that the most right point in Figure~\ref{fig:octahedron_glob} cannot move inside the black projection if the parameters $\thetab_1, \phib_1,\thetab_2, \phib_2,\alphab$ are changed by at most $\epsilon$. 
\end{example}

\subsection{Definitions and lemmas} \label{sec:derivatives}

\begin{lem} \label{lem:hullscalarprod}
    If $\mathcal{P} \subseteq \R^2$ is a convex polygon, $S \in \mathcal{P}^\circ$ and $w \in \R^2$ then
    \[
        \langle S ,w \rangle \leq \max_{P \in \PP} \langle P ,w\rangle.
    \]
\end{lem}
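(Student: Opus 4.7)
The plan is to reduce the inequality to the basic fact that a linear functional on a convex hull attains its maximum at an extreme point, which is here automatically one of the original points of $\PP$. Since $\mathcal{P}^\circ \subseteq \overline{\mathcal{P}}$ by definition, and $\overline{\mathcal{P}}$ is exactly the convex hull of the finite set $\mathcal{P}$, the assumption $S \in \mathcal{P}^\circ$ lets us write $S$ as a convex combination of the points of $\PP$. This already encodes essentially everything needed.

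Concretely, I would first enumerate $\PP = \{P_1,\dots,P_n\}$ and invoke the standard characterization of the convex hull: there exist coefficients $\lambda_1,\dots,\lambda_n \geq 0$ with $\sum_{i=1}^n \lambda_i = 1$ such that $S = \sum_{i=1}^n \lambda_i P_i$. Then by bilinearity of the inner product,
\[
\langle S, w \rangle \;=\; \sum_{i=1}^n \lambda_i \langle P_i, w \rangle \;\leq\; \left(\sum_{i=1}^n \lambda_i\right) \max_{P \in \PP} \langle P, w \rangle \;=\; \max_{P \in \PP} \langle P, w \rangle,
\]
where the inequality uses $\lambda_i \geq 0$ together with $\langle P_i, w \rangle \leq \max_{P\in\PP}\langle P,w\rangle$ for every $i$. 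This yields the desired bound.

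There is no real obstacle in this proof: the only thing to be careful about is making sure that the conventions of the paper are applied correctly, namely that $\PP$ itself is the finite vertex set (so taking $\max$ over $P \in \PP$ is legitimate and means maximizing over the original points), and that $\mathcal{P}^\circ$ really is a subset of $\overline{\mathcal{P}} = \mathrm{conv}(\mathcal{P})$, which follows directly from the definitions given in \cref{sec:intro}. One could equivalently appeal to Carathéodory's theorem to bound the number of nonzero $\lambda_i$ by three, but for this inequality no such refinement is needed.
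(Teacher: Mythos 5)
Your proposal is correct and follows essentially the same route as the paper: express $S$ as a convex combination $\sum_i \lambda_i P_i$ of the points of $\PP$ and bound $\langle S, w\rangle = \sum_i \lambda_i \langle P_i, w\rangle$ by the maximum. The only difference is cosmetic (the remark about Carath\'eodory is unnecessary, as you note), so there is nothing to add.
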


\begin{proof}
    Write $S = \sum_i \lambda_i P_i$ for $P_i \in \PP$ the vertices of $\PP$ and $\lambda_i \in [0,1)$ such that $\sum_i \lambda_i = 1$. Moreover, denote $M \coloneqq \max_{P \in \PP} \langle P ,w\rangle $. Then:
    \[
        \langle S ,w \rangle = \sum_i \lambda_i \langle P_i, w \rangle \leq M \cdot \sum_i \lambda_i = M. \qedhere
    \]
\end{proof}

As explained above, we will use derivatives of the matrices encountered so far. Naturally, we write, for example,
\begin{align*}
    M^\theta(\theta, \phi)& \coloneqq \frac{\dd{}}{\dd{}\theta} M(\theta, \phi)  =
    \begin{pmatrix} 
    -\cos(\theta) & -\sin(\theta) & 0 \\ 
    \sin(\theta)\cos(\phi) & -\cos(\theta)\cos(\phi) & 0
    \end{pmatrix} \quad \text{and}\\
    M^\phi(\theta, \phi) & \coloneqq \frac{\dd{}}{\dd{}\phi} M(\theta, \phi) =
    \begin{pmatrix} 
    0 & 0 & 0 \\ 
    \cos(\theta)\sin(\phi) & \sin(\theta)\sin(\phi) & \cos(\phi)
    \end{pmatrix}.
\end{align*}
Note that this, for instance, means that 
\[
    M^\phi(x, y) =
    \begin{pmatrix} 
    0 & 0 & 0 \\ 
    \cos(x)\sin(y) & \sin(x)\sin(y) & \cos(y)
    \end{pmatrix}.
\]
Moreover, in the same way we also write for the derivative of the rotation matrix in two dimensions:
\[
R'(\alpha) \coloneqq \frac{\dd{}}{\dd{}\alpha} R(\alpha) = 
    \begin{pmatrix} 
    -\sin(\alpha) & -\cos(\alpha) \\ 
    \cos(\alpha)& -\sin(\alpha)\\ 
    \end{pmatrix}
\]
and for the derivatives of the three-dimensional rotations along axes we analogously use:
\begin{align*}
R_{x}'(\alpha)\coloneqq 
    \begin{pmatrix} 
    0 & 0 & 0\\ 
    0 & -\sin(\alpha) & -\cos(\alpha)\\
    0 & \cos(\alpha) & -\sin(\alpha)
    \end{pmatrix}, & \qquad R_{y}'(\alpha)\coloneqq 
    \begin{pmatrix} 
    -\sin(\alpha) & 0 & -\cos(\alpha)\\
    0 & 0 & 0\\ 
    \cos(\alpha) & 0 & -\sin(\alpha) 
    \end{pmatrix} \quad \text{and} \\
    R_{z}'(\alpha)\coloneqq &
    \begin{pmatrix} 
    -\sin(\alpha) & -\cos(\alpha) & 0 \\
    \cos(\alpha) & -\sin(\alpha) & 0\\
    0 & 0 & 0
    \end{pmatrix}.
\end{align*}
Similarly we shall denote the second derivatives by $M^{\theta \theta}(\theta, \phi), M^{\theta \phi}(\theta, \phi), R''(\alpha), R_{x}''(\alpha)$, etc.

\bigskip

It is easy to check that all derivatives of the rotation matrices have unit norm, e.g., $\|R_{z}''(\alpha)\|= 1$. Moreover, the following identities follow from the chain rule and (\ref{eq:XMRfromR2}):
\begin{align*}    
    M^\theta(\theta,\varphi) &=-
    \begin{pmatrix} 
        0 & 1 & 0\\ 
        -1 & 0 & 0\\
    \end{pmatrix}
    \cdot R_y(\varphi) \cdot R_z'(-\theta),\\
    M^\varphi (\theta,\varphi) &=
    \begin{pmatrix} 
        0 & 1 & 0\\ 
        -1 & 0 & 0\\
    \end{pmatrix}
    \cdot R_{y}'(\varphi) \cdot R_z(-\theta).
\end{align*}

\begin{lem} \label{lem:leq1}
    Let $S \in \R^3$ and $w \in \R^2$ be unit vectors and set $f(x_1,x_2,x_3) = \langle R(x_3) M(x_1,x_2)S,w \rangle$. Then for all $x_1,x_2,x_3 \in \R$ and any $i,j \in \{1,2,3\}$ it holds that
    \[
        \left|\frac{\dd^2 f}{\dd x_i \dd x_j}(x_1,x_2,x_3)\right|\leq 1.
    \]
\end{lem}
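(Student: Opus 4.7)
The approach is to express $f$ as a matrix product via identity (\ref{eq:XMRfromR3}) and reduce the bound on each second partial derivative to a product of matrices of operator norm at most $1$.

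First I would rewrite $f(x_1,x_2,x_3) = w^t Z R_z(x_3) R_y(x_2) R_z(-x_1) S$ using identity (\ref{eq:XMRfromR3}), where $Z = \bigl(\begin{smallmatrix} 0 & 1 & 0 \\ -1 & 0 & 0 \end{smallmatrix}\bigr)$. Then each partial derivative $\partial/\partial x_i$ acts on exactly one of the three rotation factors (the $x_1$-case picking up an extra sign from the chain rule with $-x_1$, which does not affect norms). Consequently every second partial derivative $\partial^2 f/\partial x_i\partial x_j$ can be written in the form
\[
    \frac{\partial^2 f}{\partial x_i \partial x_j}(x_1,x_2,x_3) = \pm\, w^t \cdot Z \cdot A_3 \cdot A_2 \cdot A_1 \cdot S,
\]
where each $A_k$ is one of $R_z(x_3), R_z'(x_3), R_z''(x_3)$ (for $k=3$), or $R_y(x_2), R_y'(x_2), R_y''(x_2)$ (for $k=2$), or $R_z(-x_1), R_z'(-x_1), R_z''(-x_1)$ (for $k=1$), depending on which factors receive derivatives.

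Next I would apply Cauchy--Schwarz and the sub-multiplicativity of the operator norm to get
\[
    \left|\frac{\partial^2 f}{\partial x_i \partial x_j}\right| \leq \|w\| \cdot \|Z\| \cdot \|A_3\| \cdot \|A_2\| \cdot \|A_1\| \cdot \|S\|.
\]
All the factors on the right are at most $1$: we have $\|w\|=\|S\|=1$ by hypothesis; $\|Z\|=1$ since $Z$ consists of two orthonormal rows (this was already verified in the proof of \cref{lem:RaRalpha}); and each $A_k$ is either a rotation matrix or one of its first or second derivatives, all of which have unit norm as noted in the paragraph just before the lemma statement. Multiplying these bounds yields the desired inequality.

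The only mild subtlety is making sure that the six distinct cases $(i,j) \in \{1,2,3\}^2$ (up to symmetry, the four cases $ii$, $12$, $13$, $23$) really do all fit the template above; but this is purely mechanical since differentiating a product of three factors with respect to $x_i$ only affects the single factor depending on $x_i$, so repeated differentiation in the same variable produces a second-derivative factor while mixed differentiation produces two first-derivative factors, and no new terms appear. Hence there is no real obstacle beyond bookkeeping.
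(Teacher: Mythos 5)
Your proposal is correct and follows essentially the same route as the paper: rewrite $f$ via identity (\ref{eq:XMRfromR3}), note that each variable sits in exactly one rotation factor so every second partial is a product of rotation matrices and their derivatives (all of unit norm), and conclude by Cauchy--Schwarz with $\|S\|=\|w\|=1$. The paper simply works out the case $(i,j)=(1,2)$ explicitly and declares the rest analogous, which is exactly the bookkeeping you describe.
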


\begin{proof}
We shall prove the statement for $(i,j)=(1,2)$ only as the other cases are analogous. First we rewrite $f(x_1,x_2,x_3)$ as
\[
    f(x_1,x_2,x_3) = \left\langle 
        \begin{pmatrix} 
            0 & 1 & 0\\ 
            -1 & 0 & 0\\
        \end{pmatrix} R_z(x_3) \cdot R_y(x_2) \cdot R_z(-x_1)S,w \right\rangle,
\]
then it follows from the chain rule that 
    \[
        \frac{\dd^2 f}{\dd x_1 \dd x_2}(x_1,x_2,x_3) = -\left\langle 
        \begin{pmatrix} 
            0 & 1 & 0\\ 
            -1 & 0 & 0\\
        \end{pmatrix} R_z(x_3) \cdot R_y'(x_2) \cdot R_z'(-x_1)S,w \right\rangle,
    \]
then the statement follows from the Cauchy-Schwarz inequality, the bounds on the matrix norms and $\|S\| = \|w\| = 1$.
\end{proof}

In the proof we will also use the following fact in which we write $\partial_{x_i}f(x_1,\dots,x_n)$ for the derivative of some differentiable $f: \R^n \to \R$ with respect to $x_i$:

\begin{lem} \label{lem:n2}
    Let $f:\R^n\to \R$ be a $C^2$-function and $x_1,\dots,x_n,y_1,\dots,y_n \in \R$ such that $|x_1-y_1|,\dots,|x_n-y_n|\leq \varepsilon$. 
    If 
    \(
        \left|\partial_{x_i}\partial_{x_j}f(x_1,\dots,x_n)\right| \leq 1
    \)
    for all $i,j \in \{1,\dots,n\}$ then
    \[
    |f(x_1,\dots,x_n)-f(y_1,\dots,y_n)|\leq \varepsilon \sum_{i=1}^n |\partial_{x_i} f(x_1,\dots,x_n)| + \frac{n^2}{2}\varepsilon^2.
    \]
\end{lem}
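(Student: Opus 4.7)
The plan is to apply Taylor's theorem with the Lagrange form of the remainder to $f$ along the line segment joining $x := (x_1,\dots,x_n)$ and $y := (y_1,\dots,y_n)$. Concretely, introduce $g(t) := f(x + t(y-x))$ for $t\in[0,1]$, which is $C^2$ on $[0,1]$ since $f$ is $C^2$ on $\R^n$. By the second-order Taylor expansion of $g$ at $t=0$ with mean-value remainder, there is some $\tau\in(0,1)$ such that $g(1) = g(0) + g'(0) + \tfrac{1}{2}g''(\tau)$. Writing out $g'$ and $g''$ via the chain rule at the intermediate point $\xi := x + \tau(y-x)$ gives
\[
f(y) = f(x) + \sum_{i=1}^n (y_i - x_i)\,\partial_{x_i}f(x) + \tfrac{1}{2}\sum_{i,j=1}^n (y_i - x_i)(y_j - x_j)\,\partial_{x_i}\partial_{x_j}f(\xi).
\]

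From there, the rest is a direct application of the triangle inequality together with the two given bounds. Subtracting $f(x)$, taking absolute values, and using $|y_i - x_i|\leq\varepsilon$ yields a first-order contribution of at most $\varepsilon\sum_{i=1}^n |\partial_{x_i}f(x)|$. For the remainder, bounding each of the $n^2$ factors $|\partial_{x_i}\partial_{x_j}f(\xi)|$ by $1$ and each $|y_i - x_i|$, $|y_j - x_j|$ by $\varepsilon$ yields a contribution of at most $\tfrac{n^2}{2}\varepsilon^2$. Adding these two bounds gives precisely the claimed inequality.

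The only mildly delicate point is that Taylor's theorem needs the second-derivative bound to hold at the intermediate point $\xi$ on the segment between $x$ and $y$, not merely at $x$ itself. The hypothesis, as literally stated, is pointwise at $x$; but in the intended reading (and in every application in the paper, where the lemma is invoked through \cref{lem:leq1} with the bound $|\partial^2 f / \partial x_i \partial x_j|\leq 1$ holding globally on $\R^n$), this is automatic. So this will not be a genuine obstacle — the heart of the proof is really just one invocation of second-order Taylor expansion followed by the triangle inequality.
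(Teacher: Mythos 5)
Your proof is correct and takes essentially the same route as the paper: both restrict $f$ to the segment via $g(t)=f(x+t(y-x))$, bound the first-order term by $\varepsilon\sum_{i}|\partial_{x_i}f(x)|$ and the second-order term by $\tfrac{n^2}{2}\varepsilon^2$, the only cosmetic difference being that you use the Lagrange form of the remainder $\tfrac12 g''(\tau)$ where the paper uses the integral form $\int_0^1\int_0^t g''(s)\,\dd s\,\dd t$. Your caveat about needing the Hessian bound along the whole segment applies equally to the paper's own argument, which implicitly reads the hypothesis that way (and the bound is indeed global in the only application, via \cref{lem:leq1}).
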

Note that the factor $n^2/2$ in this lemma is optimal as for the function $f(x_1,\dots,x_n)=(x_1+\cdots + x_n)^2/2$ at $x_1=\cdots=x_n=0$ and $y_1=\cdots=y_n=\varepsilon$ equality holds.

\begin{proof}
    Let $x = (x_1,\dots,x_n)$ and $y = (y_1,\dots,y_n)$. We define $g:[0,1]\to \R$ by
    \[
        g(t) = f\big(x_1(1-t)+y_1t, \dots ,x_n(1-t)+y_nt\big) = f(x(1-t)+yt)
    \]
    and observe that 
    \[
        g(1)-g(0)=\int_0^1g'(t)\text{d}t=\int_0^1\int_0^t g''(s)\text{d}s+g'(0)\text{d}t=g'(0)+\int_0^1\int_0^t g''(s)\text{d}s\text{d}t.
    \]
    Note that $|g(1)-g(0)|$ is exactly the term which needs to be bounded.
    The chain rule implies that 
    \[
        g'(t)=\sum_{i=1}^n (y_i-x_i) \partial_{x_i} f(x(1-t)+yt),
    \]
    thus at $t=0$ we find
    \begin{align*}
    |g'(0)| \leq\sum_{i=1}^n |y_i-x_i| \cdot | \partial_{x_i} f(x_1,\dots,x_n)| \leq \epsilon \sum_{i=1}^n |\partial_{x_i} f(x_1,\dots,x_n)|. 
    \end{align*}
    For the second derivative of $g(t)$ we also get with the chain rule
    $$g''(t)=\sum_{i=1}^n \sum_{j=1}^n(y_i-x_i)(y_j-x_j) \cdot \partial_{x_i} \partial_{x_j} f\big(x(1-t)+y t\big),$$
   which implies that
    $|g''(t)|\leq n^2 \varepsilon^2$.
    Altogether, one obtains
    \begin{align*}
      |f(x_1,\dots,x_n)-f(y_1,\dots,y_n)| & = |g(1)-g(0)| \leq |g'(0)|+\int_0^1\int_0^t |g''(s)|\text{d}s\text{d}t\\
      &\leq \epsilon \sum_{i=1}^n | \partial_{x_i} f(x_1,\dots,x_n)| + \frac{n^2}{2}\varepsilon^2. \qquad \qedhere
    \end{align*}
\end{proof}

\subsection{Proof of the global theorem}

\begin{proof}[Proof of \cref{thm:global}]
    We prove the statement by contradiction: Assume that there exists $S \in \PPP$ with a unit vector $w \in \R^2$ like in the assumption and $(\theta_1,\varphi_1,\theta_2,\varphi_2,\alpha) \in U$ such that Rupert's condition is satisfied. As noted before, Rupert's condition and \cref{lem:hullscalarprod} imply in particular that 
    \[
        \max_{P\in \PPP}\langle R(\alpha) M(\theta_1, \phi_1)P,w \rangle < \max_{P\in \PPP}\langle  M(\theta_2, \phi_2)P,w \rangle.
    \]
    We shall, however, show that the following sequence of inequalities yields the desired contradiction: 
    \[
        \max_{P\in \PPP}\langle R(\alpha) M(\theta_1, \phi_1)P,w \rangle \stackrel{\text{(i)}}{\geq} \langle R(\alpha) M(\theta_1, \phi_1)S,w \rangle\stackrel{\text{(ii)}}{\geq} G \stackrel{\text{(iii)}}{>}H\stackrel{\text{(iv)}}{\geq}\max_{P\in \PPP}\langle  M(\theta_2, \phi_2)P,w \rangle.
    \]

    The inequality (i) is obvious and (iii) is an assumption of the theorem. To show inequality (ii) define the function 
    \[
        f(x_1,x_2,x_3) = \langle R({x_3}) M({x_1,x_2})S/\|S\|,w \rangle = \langle R({x_3}) M({x_1,x_2})S,w \rangle/\|S\|.
    \]
    Then \cref{lem:leq1} guarantees that 
    \[
        \left|\frac{\partial^2f}{\partial x_i \partial x_j}(x_1,x_2,x_3)\right|\leq 1,
    \]
    for all $x_1,x_2,x_3 \in \R$ and any $i,j \in \{1,2,3\}$; therefore \cref{lem:n2} applies and implies that
    \[
        |f(\thetab_1,\phib_1,\alphab)-f(\theta_1,\varphi_1,\alpha)|\leq \varepsilon \sum_{i=1}^3 |\partial_{x_i} f(\thetab_1,\phib_1,\alphab)| + \frac{9}{2}\varepsilon^2,
    \]
    as $|\thetab_1-\theta_1|,|\phib_1-\varphi_1|,|\alphab-\alpha|\leq \varepsilon$ by assumption. Multiplying the inequality with $\|S\|$ we get
    \begin{align*}
        |\langle R(a)  M({\thetab_1,\phib_1})S,w \rangle-\langle R(\alpha) M(\theta_1, \phi_1)S,w \rangle| & \\
       & \hspace{-6.5cm} \leq \varepsilon\cdot\bigg(|\langle R(\alpha)  M({\thetab_1,\phib_1})S,w \rangle|+|\langle R(\alphab)  M^\theta({\thetab_1,\phib_1})S,w \rangle|+|\langle R(\alphab)  M^\varphi({\thetab_1,\phib_1})S,w \rangle|\bigg) +\frac{9}{2}\varepsilon^2\|S\|.
    \end{align*}
    From this and $\|S\|\leq 1$ the inequality (ii) follows.
    Finally, inequality (iv) can be shown in a similar way: for every $P \in \PPP$, again using \cref{lem:n2}, it holds that
    \[ 
        \langle   M({\thetab_2,\phib_2})P,w \rangle + \epsilon\cdot\bigg(|\langle M^\theta({\thetab_2,\phib_2})P,w \rangle|+|\langle  M^\varphi({\thetab_2,\phib_2})P,w \rangle|\bigg)+ 2\epsilon^2 \geq \langle  M(\theta_2, \phi_2)P,w \rangle. \qedhere
    \]
\end{proof}

\section{The local theorem} \label{sec:local_thm}

As explained in \cref{sec:outline}, the global theorem cannot possibly handle all regions of the initial search space $I = [0,2\pi) \times [0, \pi] \times [0,2\pi) \times [0, \pi] \times [-\pi/2, \pi/2)$. For some $\theta \in [0, 2 \pi)$ and $\phi \in [0, \pi]$ we would like to examine whether there exists a solution $(\theta_1,\varphi_1,\theta_2,\varphi_2,\alpha)$ to Rupert's problem for $\PPP$ with $\theta_1,\theta_2 \approx \theta$, $\phi_1,\phi_2 \approx \phi$ and $\alpha \approx 0$, since this is one of such cases. More generally, the \emph{local theorem} will be useful if two the projections $\PP$ and $\QQ$ of $\PPP$ look very similar.

\subsection{Definitions and lemmas}
Given $\theta, \phi$, we shall write $X = X(\theta, \phi)$ for the direction of a projection matrix $M = M(\theta, \phi)$. We start with a simple lemma relating $X$ and $M$:

\begin{lem} \label{lem:pythagoras}
    For any $P \in \mathbb{R}^3$ one has
    $\big\|M(\theta, \phi) P\big\|^2=\|P\|^2-\langle X({\theta,\varphi}),P\rangle^2$.
\end{lem}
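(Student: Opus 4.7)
The plan is to reduce the identity to a straightforward Pythagorean theorem after recognizing that the two rows of $M(\theta,\phi)$ together with $X(\theta,\phi)$ form an orthonormal basis of $\R^3$. Concretely, I would first denote the rows of $M(\theta,\phi)$ by $m_1 = (-\sin\theta, \cos\theta, 0)^t$ and $m_2 = (-\cos\theta\cos\phi, -\sin\theta\cos\phi, \sin\phi)^t$, and observe that $\|Mv\|^2 = \langle m_1, v\rangle^2 + \langle m_2, v\rangle^2$ for every $v \in \R^3$, simply by the definition of the Euclidean norm applied to the vector $Mv \in \R^2$.

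The main step is then to verify that $\{m_1, m_2, X(\theta,\phi)\}$ is an orthonormal set. This amounts to a direct calculation using the Pythagorean identity $\sin^2 + \cos^2 = 1$: one checks $\|m_1\|^2 = \sin^2\theta + \cos^2\theta = 1$, similarly $\|m_2\|^2 = \cos^2\phi(\cos^2\theta + \sin^2\theta) + \sin^2\phi = 1$ and $\|X(\theta,\phi)\|^2 = \sin^2\phi + \cos^2\phi = 1$, and each of the three pairwise inner products vanishes by elementary cancellation (e.g. $\langle m_1, X\rangle = -\sin\theta\cos\theta\sin\phi + \sin\theta\cos\theta\sin\phi + 0 = 0$).

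Once orthonormality is established, the three vectors form an orthonormal basis of $\R^3$, so for any $P \in \R^3$ Parseval's identity (equivalently, writing $P = \langle m_1,P\rangle m_1 + \langle m_2,P\rangle m_2 + \langle X,P\rangle X$ and taking norms) gives
\[
    \|P\|^2 = \langle m_1, P\rangle^2 + \langle m_2, P\rangle^2 + \langle X(\theta,\phi), P\rangle^2 = \|M(\theta,\phi) P\|^2 + \langle X(\theta,\phi), P\rangle^2,
\]
which is exactly the claim after rearranging.

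There is no real obstacle here beyond bookkeeping: the only risk is a sign/labeling mistake in verifying orthonormality, but since $M(\theta,\phi)$ was designed in \cref{def:RXM} precisely to represent the orthogonal projection along $X(\theta,\phi)$, the identity is essentially built in. Alternatively, one could bypass the explicit verification by invoking (\ref{eq:XMRfromR1}) and (\ref{eq:XMRfromR2}) to write both $X(\theta,\phi)$ and $M(\theta,\phi)$ as compositions with the orthogonal rotations $R_y(\phi) R_z(-\theta)$, reducing to the trivial case $\theta = \phi = 0$ where $X = (0,0,1)^t$ and $M$ selects the first two coordinates; this may be the cleaner route if one wants to avoid trigonometric computations.
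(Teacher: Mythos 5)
Your proposal is correct and is essentially the paper's argument: the paper stacks $M(\theta,\phi)$ and $X(\theta,\varphi)^t$ into a $3\times 3$ matrix $A$, checks $AA^t=\id$ (which is exactly your orthonormality verification of the two rows of $M$ together with $X$), and concludes from length preservation, which is the same as your Parseval step. Your alternative remark about reducing to $\theta=\phi=0$ via (\ref{eq:XMRfromR1})--(\ref{eq:XMRfromR2}) is a valid shortcut but not what the paper does.
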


\begin{proof}
    Define the matrix 
    \[
        A({\theta,\varphi}) \coloneqq\begin{pmatrix}
        M(\theta, \phi)\\
        X({\theta,\varphi})^t
        \end{pmatrix} \in \R^{3 \times 3}.
    \]
    Note that $A({\theta,\varphi}) \cdot A({\theta,\varphi})^t = \id$, thus the matrix is orthogonal and hence it is length-preserving. We therefore obtain
    \[
        \|P\|^2=\|A({\theta,\varphi})P\|^2=\left\|\begin{pmatrix}
        M(\theta, \phi)P\\
        \langle X({\theta, \varphi}),P\rangle
        \end{pmatrix}\right\|^2=\big\|M(\theta, \phi) P\big\|^2+\langle X({\theta, \varphi}),P\rangle^2. \qedhere
    \]
\end{proof}

\begin{dfn}
    Given $v_1, \dots, v_n \in \R^n$ write $\mathrm{span}^+(v_1,\dots,v_n)$ for the set (simplicial cone) in $\R^n$ defined by 
    \[
        \mathrm{span}^+(v_1,\dots,v_n) = \Big\{ w \in \R^n \colon \exists \lambda_1,\dots,\lambda_n > 0 \text{ s.t. } w = \sum_{i=1}^n \lambda_i v_i \Big\},
    \]
    which is the natural restriction of $\mathrm{span}(v_1,\dots,v_n)$ to positive weights.
\end{dfn}

The following lemma lies at the core of the main argument in the proof of the local \cref{thm:local}:
\begin{lem} \label{lem:langles}
Let $V_1,V_2,V_3,Y,Z \in \R^3$ with $\|Y \|=\| Z \|$ and $Y,Z \in \mathrm{span}^+(V_1,V_2,V_3)$. Then at least one of the following inequalities does not hold:
\begin{align*}
    \langle V_1, Y \rangle > \langle V_1, Z \rangle,\\
    \langle V_2, Y \rangle > \langle V_2, Z \rangle,\\
    \langle V_3, Y \rangle > \langle V_3, Z \rangle.
\end{align*}
\end{lem}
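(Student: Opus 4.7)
The plan is to argue by contradiction. Suppose all three inequalities $\langle V_i, Y\rangle > \langle V_i, Z\rangle$ hold simultaneously, and set $W := Y - Z$ so that $\langle V_i, W\rangle > 0$ for $i=1,2,3$. The goal will be to derive a contradiction to $\|Y\| = \|Z\|$.

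Next, I would exploit the $\mathrm{span}^+$ hypothesis. By the definition given just above, there exist strictly positive coefficients $\lambda_1,\lambda_2,\lambda_3 > 0$ and $\mu_1,\mu_2,\mu_3 > 0$ with $Y = \sum_i \lambda_i V_i$ and $Z = \sum_i \mu_i V_i$. Pairing these expansions with $W$, every term in the sums $\langle Y, W\rangle = \sum_i \lambda_i \langle V_i, W\rangle$ and $\langle Z, W\rangle = \sum_i \mu_i \langle V_i, W\rangle$ is strictly positive, so both $\langle Y, W\rangle > 0$ and $\langle Z, W\rangle > 0$.

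Finally, I would translate these two strict positivities back into norms: the identity $\langle Y, W\rangle = \|Y\|^2 - \langle Y, Z\rangle$ yields $\|Y\|^2 > \langle Y, Z\rangle$, while $\langle Z, W\rangle = \langle Y, Z\rangle - \|Z\|^2$ yields $\langle Y, Z\rangle > \|Z\|^2$. Chaining the two gives $\|Y\|^2 > \|Z\|^2$, contradicting $\|Y\| = \|Z\|$.

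The proof is short and essentially algebraic, so I do not anticipate a real obstacle; the only subtlety worth highlighting is that the coefficients $\lambda_i, \mu_i$ are \emph{strictly} positive (as built into the definition of $\mathrm{span}^+$). This strictness is exactly what upgrades $\langle Y,W\rangle, \langle Z,W\rangle \geq 0$ to strict positivity, and in turn produces the strict inequality $\|Y\|^2 > \|Z\|^2$ needed to contradict the norm-equality hypothesis.
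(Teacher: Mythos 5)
Your proof is correct and is essentially the paper's argument: the paper writes $Y=V\lambda$, $Z=V\nu$ in matrix form and derives the chain $\|Y\|^2=\lambda^T V^TV\lambda>\lambda^TV^TV\nu=\langle Y,Z\rangle>\nu^TV^TV\nu=\|Z\|^2$, which is exactly your pairing of the strictly positive coefficients with $W=Y-Z$ to get $\|Y\|^2>\langle Y,Z\rangle>\|Z\|^2$. Only the notation differs.
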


\begin{proof}
Let $V = (V_1|V_2|V_3) \in \R^{3 \times 3}$ and assume by contradiction that all inequalities hold. This translates into
\begin{align}\label{ineq}
V^T Y \gg V^T Z,
\end{align}
where we use $x \gg y$ for vectors $x,y\in\R^3$ if $x_1>y_1,x_2>y_2$ and $x_3>y_3.$ Moreover, let $\lambda, \nu \in \R^3_{>0}$ such that $Y = \lambda_1 V_1+\lambda_2V_2+\lambda_3V_3$ and
$Z = \nu_1 V_1+\nu_2V_2+\nu_3V_3$. In other words, we have 
\[
Y =  V \lambda  \quad \text{ and }  \quad Z =  V\nu .
\]
The assumption that $\|Y\| = \|Z\|$ translates therefore into
\[
\lambda^T V^T V \lambda = \nu^T V^T V \nu,
\]
and the inequalities $(\ref{ineq})$ rewrite as
\[
V^T V \lambda \gg V^T V \nu.
\]
Using this and the positivity of $\lambda,\nu \in \R^3$, we find a contradiction in 
\[
    \lambda^T V^T V \lambda > \lambda^T V^T V \nu = (\nu^T V^T V \lambda)^T > (\nu^TV^TV\nu)^T = \lambda^T V^T V \lambda.  \qedhere
\]
\end{proof}

Moreover we will use the following elementary bounds for differences of scalar products:
\begin{lem} \label{lem:scalarprodbars}
    For $A,\overline{A},B,\overline{B}\in \R^{m\times n}$ and $P_1,P_2\in \R^n$ it holds that
    \[
        |\langle AP_1,BP_2\rangle-\langle \overline{A}P_1,\overline{B}P_2\rangle|\leq \|P_1\|\cdot \|P_2\|\cdot \Big( \|A-\overline{A}\|\cdot \|\overline{B}\| +  \|\overline{A}\|\cdot \|B-\overline{B}\|+\|A-\overline{A}\|\cdot \|B-\overline{B}\|\Big).
    \]
\end{lem}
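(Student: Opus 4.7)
The plan is to exploit the bilinearity of the inner product by splitting $A = \overline{A} + (A - \overline{A})$ and $B = \overline{B} + (B - \overline{B})$. Substituting into $\langle AP_1, BP_2 \rangle$ and expanding, the term $\langle \overline{A}P_1, \overline{B}P_2 \rangle$ cancels against the term being subtracted, leaving three ``error'' terms:
\[
    \langle (A-\overline{A})P_1, \overline{B}P_2 \rangle, \qquad \langle \overline{A}P_1, (B-\overline{B})P_2 \rangle, \qquad \langle (A-\overline{A})P_1, (B-\overline{B})P_2 \rangle.
\]

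Next I would apply the triangle inequality to reduce the bound to controlling the absolute value of each of these three inner products separately. For each one I would use the Cauchy-Schwarz inequality $|\langle u, v\rangle| \leq \|u\| \cdot \|v\|$, followed by sub-multiplicativity of the operator norm in the form $\|MP\| \leq \|M\| \cdot \|P\|$. This yields for instance $|\langle (A-\overline{A})P_1, \overline{B}P_2 \rangle| \leq \|A-\overline{A}\| \cdot \|P_1\| \cdot \|\overline{B}\| \cdot \|P_2\|$, and the other two terms produce the remaining summands in the target inequality after factoring out the common $\|P_1\| \cdot \|P_2\|$.

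There is no real obstacle here: the argument is a standard ``splitting trick'' and everything reduces to routine bookkeeping. The only thing to be careful about is making sure the sign conventions and the grouping of the three terms match exactly the form $\|A-\overline{A}\|\cdot \|\overline{B}\| + \|\overline{A}\|\cdot \|B-\overline{B}\| + \|A-\overline{A}\|\cdot \|B-\overline{B}\|$ stated in the lemma, which they do by construction.
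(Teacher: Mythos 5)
Your proposal is correct and matches the paper's own argument: the same decomposition $A = \overline{A} + (A-\overline{A})$, $B = \overline{B} + (B-\overline{B})$, expansion by bilinearity, then the triangle inequality, Cauchy--Schwarz, and sub-multiplicativity of the operator norm. Nothing further is needed.
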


\begin{proof}
    First write
    \begin{align*}
        \langle AP_1,BP_2\rangle&=\langle (A-\overline{A})P_1+\overline{A}P_1,(B-\overline{B})P_2+\overline{B}P_2\rangle\\
        &=\langle (A-\overline{A})P_1,\overline{B}P_2\rangle+\langle \overline{A}P_1,(B-\overline{B})P_2\rangle+\langle (A-\overline{A})P_1,(B-\overline{B})P_2\rangle+\langle \overline{A}P_1,\overline{B}P_2\rangle.
    \end{align*}
    Then the inequality follows from the triangle inequality, the Cauchy-Schwarz inequality and the submultiplicativity of $\|.\|$:
    \begin{align*}
    |\langle AP_1,BP_2\rangle-\langle \overline{A}P_1,\overline{B}P_2\rangle|    &\leq\Big|\langle (A-\overline{A})P_1,\overline{B}P_2\rangle\Big|+\Big|\langle \overline{A}P_1,(B-\overline{B})P_2\rangle\Big|+\Big|\langle (A-\overline{A})P_1,(B-\overline{B})P_2\rangle\Big|\\
    & \hspace{-1cm} \leq\|(A-\overline{A})P_1\|\cdot \|\overline{B}P_2\|+\|\overline{A}P_1\|\cdot \|(B-\overline{B})P_2\|+\|(A-\overline{A})P_1\|\cdot \|(B-\overline{B})P_2\|\\
    & \hspace{-1cm} \leq \|P_1\|\cdot \|P_2\|\cdot \Big( \|A-\overline{A}\|\cdot \|\overline{B}\| +  \|\overline{A}\|\cdot \|B-\overline{B}\|+\|A-\overline{A}\|\cdot \|B-\overline{B}\|\Big). \quad \qedhere
    \end{align*}
\end{proof}

\begin{lem} \label{lem:absscalar}
    For $A,B\in \R^{m\times n}$ and $P_1,P_2\in \R^n$ one has
    $$|\langle AP_1,AP_2\rangle-\langle BP_1,BP_2\rangle|\leq \|P_1\|\cdot \|P_2\|\cdot \|A-B\|\cdot \bigg(\|A\|+\|B\| + \|A-B\|\bigg).$$
\end{lem}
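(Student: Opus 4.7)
The plan is to reduce this directly to Lemma~\ref{lem:scalarprodbars}, which is a more general statement already proven just above. Specifically, I would apply Lemma~\ref{lem:scalarprodbars} with $A$ and $B$ (the current $A,B$) playing the roles of the matrices on the left-hand-side, while setting $\overline{A} = B$ and $\overline{B} = B$ in the notation of that lemma. The right-hand-side then reduces to
\[
    \|P_1\|\cdot \|P_2\|\cdot \Big(\|A-B\|\cdot \|B\| + \|B\|\cdot \|A-B\| + \|A-B\|^2\Big) = \|P_1\|\cdot \|P_2\|\cdot \|A-B\| \cdot \big(2\|B\| + \|A-B\|\big).
\]
By the symmetry of the left-hand-side of the inequality in $A$ and $B$, I could equally well apply Lemma~\ref{lem:scalarprodbars} with the roles of $A$ and $B$ swapped, yielding the analogous bound with $2\|A\|$ in place of $2\|B\|$. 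Taking the arithmetic mean of the two inequalities (or just observing $\min(2\|A\|, 2\|B\|) \leq \|A\|+\|B\|$ and keeping whichever bound is tighter) gives exactly the stated inequality.

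As a sanity check, one can also argue directly without invoking Lemma~\ref{lem:scalarprodbars}: by adding and subtracting $\langle BP_1, AP_2\rangle$,
\[
    \langle AP_1,AP_2\rangle - \langle BP_1, BP_2\rangle = \langle (A-B)P_1, AP_2\rangle + \langle BP_1, (A-B)P_2\rangle,
\]
and then Cauchy--Schwarz together with submultiplicativity of the operator norm gives
\[
    |\langle AP_1,AP_2\rangle - \langle BP_1, BP_2\rangle| \leq \|P_1\|\cdot\|P_2\|\cdot\|A-B\|\cdot(\|A\|+\|B\|),
\]
which is actually slightly sharper than the stated bound and so trivially implies it (since $\|A-B\|\geq 0$).

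There is no real obstacle here: the statement is a routine consequence of the bilinearity of the inner product and the submultiplicativity of $\|\cdot\|$. The only minor choice is whether to derive the claim via a single direct decomposition or as an immediate corollary of Lemma~\ref{lem:scalarprodbars}; stylistically, since the preceding lemma was just proved in full generality, invoking it is the cleanest presentation.
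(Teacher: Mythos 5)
Your main argument is essentially identical to the paper's proof: the paper also first obtains the asymmetric bound $\|P_1\|\cdot\|P_2\|\cdot\|A-B\|\cdot(2\|B\|+\|A-B\|)$ (by "exactly the same proof" as \cref{lem:scalarprodbars}, which is what your substitution $\overline{A}=\overline{B}=B$ amounts to), then swaps $A$ and $B$ and takes the arithmetic mean of the two bounds. Your direct decomposition $\langle (A-B)P_1,AP_2\rangle+\langle BP_1,(A-B)P_2\rangle$ in the sanity check is also correct and in fact gives a slightly sharper bound, but the route you actually propose coincides with the paper's.
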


\begin{proof}
    Exactly the same proof as the one for \cref{lem:scalarprodbars} yields
    \begin{align*}
    |\langle AP_1,AP_2\rangle-\langle BP_1,BP_2\rangle|  \leq  \|P_1\|\cdot \|P_2\|\cdot \|A-B\|\cdot \bigg( 2\|B\| + \|A-B\|\bigg)
    \end{align*}
    Exchanging $A$ and $B$, however, also gives the same inequality with $2\|A\|$ instead of $2\|B\|$. Taking the arithmetic mean of these two upper bounds produces the desired symmetric inequality. 
\end{proof}

\subsubsection{Spanning points} \label{sec:spanning}

In order to motivate the definition of spanning points we first prove a simple-to-check algebraic condition for a point to be strictly inside a triangle $ABC$ in $\R^2$. For simplicity, we will only consider the case where the point of interest is the origin. 
Hence, we wish for a criterion on $A,B,C \in \R^2$ such that $(0,0) \in \R^2$ is strictly inside their convex hull. Note that for a non-degenerate triangle this is equivalent to the existence of $a,b,c > 0$ such that
\[
    aA+bB+cC=
    \begin{pmatrix}
        0\\0
    \end{pmatrix}.
\]

\begin{lem} \label{lem:origintriangle}
    Let $A,B,C\in \mathbb{R}^2$ be such that $
    \langle R({\pi/2}) A,B\rangle,
    \langle R({\pi/2}) B,C\rangle,
    \langle R({\pi/2}) C,A\rangle >0$. Then the origin lies strictly in the triangle $ABC$.
\end{lem}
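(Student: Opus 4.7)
The plan is to write the origin explicitly as a strict convex combination of $A, B, C$ using the three given positive quantities as (unnormalized) barycentric weights. The key observation is that $\langle R(\pi/2) v, w\rangle$ is exactly the $2\times 2$ determinant $v_1 w_2 - v_2 w_1$, i.e.\ the signed area form on $\R^2$. I will denote it by $[v,w]$ for brevity in this sketch.

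First I would set
\[
    a := [B,C] = \langle R(\pi/2) B, C \rangle, \quad b := [C,A] = \langle R(\pi/2) C, A\rangle, \quad c := [A,B] = \langle R(\pi/2) A, B\rangle,
\]
all of which are strictly positive by the hypothesis (after noting that $[B,C]=-\langle R(\pi/2)C,B\rangle=\langle R(\pi/2)B,C\rangle$, using antisymmetry). Then the main computation is to verify the identity
\[
    aA + bB + cC \;=\; [B,C]\,A + [C,A]\,B + [A,B]\,C \;=\; 0 \in \R^2,
\]
which is a direct coordinate-wise expansion: the first coordinate is $(B_1C_2-B_2C_1)A_1 + (C_1A_2-C_2A_1)B_1 + (A_1B_2-A_2B_1)C_1$ which cancels to zero, and analogously for the second coordinate. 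This identity is just the standard linear dependence of any three vectors in $\R^2$, with the specific dependence coefficients made explicit.

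Finally I set $s := a + b + c > 0$ and rewrite the identity as
\[
    0 \;=\; \frac{a}{s}\,A + \frac{b}{s}\,B + \frac{c}{s}\,C,
\]
which expresses the origin as a convex combination of $A, B, C$ with strictly positive coefficients summing to $1$. This is precisely the statement that the origin lies in the (relative) interior of the triangle $ABC$; non-degeneracy of the triangle is automatic, since if $A, B, C$ were collinear with $0$ then at least one of $[B,C], [C,A], [A,B]$ would vanish, contradicting the hypothesis. I expect no real obstacle here: the only step requiring care is the algebraic verification of the vanishing identity, which is a short direct calculation.
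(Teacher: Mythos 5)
Your proposal is correct and follows essentially the same route as the paper: both take $a=\langle R(\pi/2)B,C\rangle$, $b=\langle R(\pi/2)C,A\rangle$, $c=\langle R(\pi/2)A,B\rangle$ as the positive weights and verify the identity $aA+bB+cC=0$, the only cosmetic difference being that the paper obtains this identity from the orthogonality of the cross product $(A_1,B_1,C_1)^t\times(A_2,B_2,C_2)^t$ while you expand the coordinates directly. Your extra normalization by $s=a+b+c$ and the non-degeneracy remark are fine additions but not a different argument.
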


\begin{proof}
    Write $A=(A_1,A_2)^t$, $B=(B_1,B_2)^t$, $C=(C_1,C_2)^t$ and consider the two vectors $v_1=(A_1,B_1,C_1)^t \in \R^3$ and $v_2=(A_2,B_2,C_2)^t \in \R^3$. Now set $(a,b,c)^t \coloneqq v_1\times v_2 \in \R^3$ to be the cross product of $v_1$ and $v_2$. By the orthogonality property of the cross product we obtain
    \[
        aA+bB+cC=\big(A| B| C\big)\cdot (v_1\times v_2)=\begin{pmatrix}
            v_1^t\\ v_2^t
        \end{pmatrix} (v_1\times v_2)=\begin{pmatrix}
            0\\0
        \end{pmatrix}.
    \]
    Additionally observe that
    \[
        \begin{pmatrix}
            a\\b\\c
        \end{pmatrix}= v_1\times v_2 = \begin{pmatrix}
            A_1\\B_1\\C_1
        \end{pmatrix} \times \begin{pmatrix}
            A_2\\B_2\\C_2
        \end{pmatrix}=\begin{pmatrix}
            B_1C_2-C_1B_2\\C_1A_2-A_1C_2\\A_1B_2-B_1A_2
        \end{pmatrix} =\begin{pmatrix}
            \langle R(\pi/2) B,C\rangle\\
            \langle R(\pi/2) C,A\rangle\\
            \langle R(\pi/2) A,B\rangle
        \end{pmatrix},
    \]
    thus $a,b,c >0$.
\end{proof}
This lemma motivates the following definition:

\begin{dfn} \label{def:eps-spanning}
    Let $\theta, \phi \in \R$, $\epsilon>0$ and set $M \coloneqq M(\theta, \phi)$. Three points $P_1, P_2, P_3 \in \R^3$ with $\|P_1\|,\|P_2\|,\|P_3\| \leq 1$ are called \emph{$\epsilon$-spanning for $(\theta, \phi)$} if it holds that:
\begin{align*}
    \langle R(\pi/2) M P_1,M P_{2}\rangle > 2 \epsilon(\sqrt{2} + \epsilon),\\
    \langle R(\pi/2) M P_2,M P_{3}\rangle > 2 \epsilon(\sqrt{2} + \epsilon),\\
    \langle R(\pi/2) M P_3,M P_{1}\rangle > 2 \epsilon(\sqrt{2} + \epsilon).
\end{align*}
\end{dfn}

Then the following lemma translates the $\epsilon$-spanning condition into a condition on $X(\theta, \phi)$:

\begin{lem} \label{lem:eps-spanning}
    Let $P_1, P_2, P_3 \in \R^3$ with $\|P_1\|,\|P_2\|,\|P_3\| \leq 1$ be $\epsilon$-spanning for $(\thetab, \phib)$ and let $\theta, \phi \in \R$ such that $|\theta - \overline{\theta}|, |\phi - \overline{\phi}| \leq \epsilon$. Assume that $\langle X(\theta, \phi), P_i \rangle > 0$ for $i=1,2,3$. Then 
    \[
        X(\theta, \phi) \in \spanp(P_1, P_2, P_3).
    \]
\end{lem}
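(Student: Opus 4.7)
The plan is to reduce the statement to Lemma \ref{lem:origintriangle} applied to the projected points $M P_1, M P_2, M P_3$ in $\R^2$, with $M = M(\theta, \phi)$, and then to lift the resulting origin-in-triangle conclusion back to $\R^3$ using that $\ker M(\theta, \phi) = \mathrm{span}(X(\theta, \phi))$. A direct computation shows $M(\theta, \phi) X(\theta, \phi) = 0$, and since $M$ has rank two, its kernel is exactly this one-dimensional line.

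The main technical step is to verify
\[
    \langle R(\pi/2) M P_i, M P_{j}\rangle > 0 \qquad \text{for } (i,j) \in \{(1,2),(2,3),(3,1)\}.
\]
The $\epsilon$-spanning hypothesis gives each of these inequalities with the larger lower bound $2\epsilon(\sqrt{2}+\epsilon)$ when $M$ is replaced by $\overline{M} = M(\overline{\theta}, \overline{\phi})$, so it suffices to prove
\[
    \bigl|\langle R(\pi/2) M P_i, M P_j\rangle - \langle R(\pi/2) \overline{M} P_i, \overline{M} P_j\rangle\bigr| \leq 2\epsilon(\sqrt{2}+\epsilon).
\]
I would apply \cref{lem:scalarprodbars} with $A = R(\pi/2) M$, $\overline{A} = R(\pi/2) \overline{M}$, $B = M$, $\overline{B} = \overline{M}$. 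By \cref{lem:sqrt2} and the fact that left-multiplication by the orthogonal matrix $R(\pi/2)$ preserves the operator norm, $\|A-\overline{A}\|,\|B-\overline{B}\| \leq \sqrt{2}\epsilon$; by \cref{lem:RaRalpha}, $\|\overline{A}\|=\|\overline{B}\|=1$; and $\|P_i\|,\|P_j\| \leq 1$ by hypothesis. Plugging these in produces the bound $\sqrt{2}\epsilon + \sqrt{2}\epsilon + 2\epsilon^2 = 2\epsilon(\sqrt{2}+\epsilon)$, as required. This is the only non-routine part of the argument; everything else follows easily from the earlier lemmas.

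With the three positivity conditions in hand, \cref{lem:origintriangle} produces $\lambda_1, \lambda_2, \lambda_3 > 0$ with $\lambda_1 M P_1 + \lambda_2 M P_2 + \lambda_3 M P_3 = 0$, equivalently $\lambda_1 P_1 + \lambda_2 P_2 + \lambda_3 P_3 \in \ker M = \mathrm{span}(X(\theta, \phi))$. Writing $\lambda_1 P_1 + \lambda_2 P_2 + \lambda_3 P_3 = c\, X(\theta, \phi)$ and taking the inner product with the unit vector $X(\theta, \phi)$, I obtain $c = \sum_{i=1}^3 \lambda_i \langle X(\theta, \phi), P_i\rangle$, which is strictly positive since each $\lambda_i > 0$ and each $\langle X(\theta, \phi), P_i\rangle > 0$ by hypothesis. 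Dividing through by $c$ expresses $X(\theta, \phi)$ as a strictly positive combination of $P_1, P_2, P_3$, which is exactly the statement $X(\theta, \phi) \in \spanp(P_1, P_2, P_3)$.
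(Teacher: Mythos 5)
Your proposal is correct and follows essentially the same route as the paper: the same application of \cref{lem:scalarprodbars} together with \cref{lem:sqrt2} to transfer the $\epsilon$-spanning inequalities from $\overline{M}$ to $M$, then \cref{lem:origintriangle} to place the origin in the projected triangle, and the same kernel-lifting argument with the inner product against $X(\theta,\phi)$ to get a positive coefficient. Your explicit remark that $\ker M(\theta,\phi)$ is exactly the line spanned by $X(\theta,\phi)$ is a small clarification the paper leaves implicit, but otherwise the two proofs coincide.
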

\begin{proof}
    It follows from the definition of $\epsilon$-spanning and \cref{lem:sqrt2} that 
    \begin{align} \label{eq:originintraiangle}
        \langle R(\pi/2) M(\theta, \phi) P_1,M(\theta, \phi) P_{2}\rangle > 0, \nonumber \\
        \langle R(\pi/2) M(\theta, \phi) P_2,M(\theta, \phi) P_{3}\rangle > 0, \\
        \langle R(\pi/2) M(\theta, \phi) P_3,M(\theta, \phi) P_{1}\rangle > 0, \nonumber
    \end{align}
    using \cref{lem:scalarprodbars} on $A=R(\pi/2) M(\theta, \phi), B =  M(\theta, \phi)$ analogously for $\overline{A}, \overline{B}$ with the fact that $\|R(\pi/2)\|=1$ and $\|P_1\|,\|P_2\|,\|P_3\| \leq 1$. Thus, \cref{lem:origintriangle} implies that the origin $O$ is strictly inside the triangle $\Delta(M(\theta, \phi) P_1, M(\theta, \phi) P_2, M(\theta, \phi) P_3)$.
    It follows that there exist $a,b,c >0$ such~that 
    \begin{align} \label{eq:Ybarkernel}
        aM(\theta,\phi)P_1 + bM(\theta,\phi)P_2 + cM(\theta,\phi)P_3 = O.
    \end{align}
    Consider the point $S = aP_1 + bP_2 + cP_3\in \R^3$. From (\ref{eq:Ybarkernel}) it follows that $S \in \ker M(\theta,\phi)$. However, since $X \coloneqq X(\theta, \phi) \in \ker M(\theta,\phi)$ as well, we get that $S = \lambda X(\theta, \phi)$ for some $\lambda \in \R$. It follows that
    \[
        \lambda = \langle X, \lambda X \rangle = \langle X, S \rangle = a \langle X, P_1\rangle + b \langle X, P_2\rangle + c \langle X, P_3\rangle > 0,
    \]
    and, consequently, that $X(\theta, \phi) = \frac{1}{\lambda}S \in \spanp(P_1, P_2, P_3)$. 
\end{proof}

\subsubsection{Locally maximally distant points} \label{sec:lmds}

In the situation of \cref{lem:eps-spanning}, the combination of \cref{lem:pythagoras} and \cref{lem:langles} allows us to show that the distances of the projections $\epsilon$-spanning points cannot all simultaneously increase. 
Intuitively, this is almost enough to exclude Rupert's property for a region $[\thetab_1 \pm \epsilon,\phib_1 \pm \epsilon, \thetab_1 \pm \epsilon, \phib_1 \pm \epsilon, 0]$, as if there was a solution, the ``smaller'' projection should have all three distances smaller. This is, however, not true in general, as shown in Figure~\ref{fig:LMD2} -- it holds that $\|OA\| > \|O Q_2\|$, even though $A \in \PP^\circ$. 
Therefore, we introduce the concept of \emph{locally maximally distant points} -- vertices of a polygon which are locally the farthest away from the origin. 
The precise \cref{def:LMD} is more technical, since we need to take care of the $\epsilon$-region.
Moreover, in \cref{lem:LMD} we will show that a vertex $P \in \PP$ is locally maximally distant if all the angles $\angle(O,P,P_j)$ are acute (for all $P_j \in \PP \setminus P$) -- evidently this is exactly the condition that is not fulfilled for $Q_2$ in Figure~\ref{fig:LMD2}.

\begin{dfn} 
    For $\delta > 0$ and $\overline{Q} \in \R^2$ we define $\Circ_{\delta}(\overline{Q})$ to be the open disk with center $\overline{Q}$ and radius~$\delta$. 
\end{dfn}

\begin{lem} \label{lem:inCirc}
    Let $P, Q \in \R^3$ with $\|P\|, \|Q\| \leq 1$. Let $\epsilon>0$ and $\thetab_1,\phib_1,\thetab_2,\phib_2,\alphab \in \R$, then set
    \[
        T \coloneqq \left(R(\alphab) M(\thetab_1, \phib_1) P + M(\thetab_2, \phib_2) Q\right)/2 \in \R^2,
    \]
    and $\delta \geq \|T - M(\thetab_2, \phib_2) Q\|$. Finally, let $\theta_1, \phi_1, \theta_2, \phi_2, \alpha \in \R$ with $|\thetab_1-\theta_1|, |\phib_1 - \phi_1|, |\thetab_2-\theta_2|, |\phib_2-\phi_2|, |\alphab - \alpha| \leq \epsilon$. Then $R(\alpha)M(\theta_1, \phi_1) P, M(\theta_2, \phi_2) Q \in \Circ_{\delta + \sqrt{5} \epsilon}(T)$.
\end{lem}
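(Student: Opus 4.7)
The plan is to bound the distance from $T$ to each of the two points $R(\alpha)M(\theta_1, \phi_1) P$ and $M(\theta_2, \phi_2) Q$ separately via the triangle inequality, using $T$ as a pivot. The key observation, built into the very definition of $T$, is that $T$ is the midpoint of $R(\alphab) M(\thetab_1, \phib_1) P$ and $M(\thetab_2, \phib_2) Q$, so both of these ``barred'' points lie at distance exactly $\|T - M(\thetab_2, \phib_2) Q\|\leq \delta$ from $T$.

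For the first inclusion, I would split
\[
\|T - R(\alpha)M(\theta_1, \phi_1) P\| \leq \|T - R(\alphab)M(\thetab_1, \phib_1) P\| + \|R(\alphab)M(\thetab_1, \phib_1) P - R(\alpha)M(\theta_1, \phi_1) P\|,
\]
where the first summand is at most $\delta$ by the midpoint observation above, and the second summand is strictly smaller than $\sqrt{5}\,\epsilon$ by \cref{lem:sqrt5}, the sub-multiplicativity of the operator norm and $\|P\| \leq 1$. This immediately places $R(\alpha)M(\theta_1,\phi_1)P$ inside $\Circ_{\delta + \sqrt{5}\epsilon}(T)$.

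For the second inclusion, I would proceed entirely analogously, splitting around $M(\thetab_2,\phib_2) Q$ and invoking \cref{lem:sqrt2} together with $\|Q\| \leq 1$; this actually yields the tighter bound $\delta + \sqrt{2}\,\epsilon$, which is in particular less than $\delta + \sqrt{5}\,\epsilon$, so $M(\theta_2,\phi_2) Q$ lies in the same disk $\Circ_{\delta + \sqrt{5}\epsilon}(T)$.

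There is no real obstacle to overcome: once the role of $T$ as a midpoint is recognized, the two quantitative lemmas from \cref{sec:bounds} each finish one half of the statement in a single triangle-inequality step. The mild asymmetry between the bounds (one of size $\sqrt{5}\,\epsilon$, the other only $\sqrt{2}\,\epsilon$) is precisely why the lemma is phrased with the common, slightly weaker constant $\sqrt{5}\,\epsilon$.
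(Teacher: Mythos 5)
Your proposal is correct and follows essentially the same argument as the paper: one triangle-inequality split around each barred point, with \cref{lem:sqrt5} (and $\|P\|\leq 1$) handling the first inclusion and \cref{lem:sqrt2} (and $\|Q\|\leq 1$) giving the tighter $\delta+\sqrt{2}\,\epsilon$ bound for the second. The midpoint observation you make explicit is exactly what the paper uses implicitly to bound $\|R(\alphab)M(\thetab_1,\phib_1)P - T\|$ by $\delta$.
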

\begin{proof}
    By the triangle inequality and \cref{lem:sqrt5} we have
    \begin{align*}
        \| R(\alpha) M(\theta_1, \phi_1) P - T \| & \leq \| R(\alpha) M(\theta_1, \phi_1) P - R(\alphab) M(\thetab_1, \phib_1) P \| + \| R(\alphab) M(\thetab_1, \phib_1) P - T \|\\
        & \leq \| R(\alpha) M(\theta_1, \phi_1) - R(\alphab) M(\thetab_1, \phib_1) \| \cdot \|P\| + \delta < \sqrt{5} \epsilon + \delta.
    \end{align*}
    Similarly, using \cref{lem:sqrt2} we also find that 
    \[
        \| M(\theta_2, \phi_2) Q - T \| < \sqrt{2} \epsilon + \delta < \sqrt{5} \epsilon + \delta. \qedhere
    \]
\end{proof}

\begin{dfn} \label{def:LMD}
    Let $\PP \subset \R^2$ be a convex polygon and $Q \in \PP$ one of its vertices. Assume that for some $\overline{Q} \in \R^2$ it holds that $Q \in \Circ_{\delta}(\overline{Q})$, i.e. $\|Q - \overline{Q}\| < \delta$. Define $\Sect_\delta(\overline{Q}) \coloneqq \Circ_{\delta}(\overline{Q}) \cap \PP^\circ$ as the intersection between $\Circ_{\delta}(\overline{Q})$ and the interior of the convex hull of $\PP$.
    
    Moreover, $Q \in \PP$ is called \emph{$\delta$-locally maximally distant with respect to $\overline{Q}$ ($\delta$-LMD$(\overline{Q})$)} if for all $A \in \Sect_\delta(\overline{Q})$ it holds that $\|Q\| > \|A\|$.
\end{dfn}

\begin{figure}[h]
    \centering
    \includegraphics[width=1\linewidth]{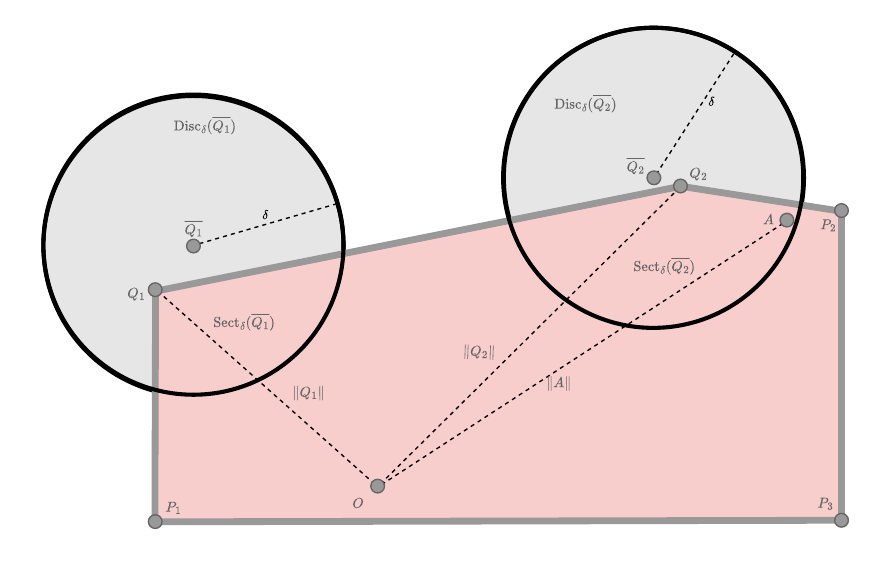}
    \caption{A polygon $\PP = \{P_1, Q_1, Q_2, P_2, P_3\}  \subset \R^2$. Here $O$ is the origin. Note that $Q_1$ is $\delta$-LMD but $Q_2$ is not, since $\|A\| > \|Q_2\|$ and $A \in \Sect_\delta(\overline{Q_2})$. As it is apparent from the proof of \cref{lem:LMD} this is because $\angle(O Q_2 P_2)$ is obtuse.}
    \label{fig:LMD2}
\end{figure}

Figure~\ref{fig:LMD2} is a visualization of this definition, in particular of $\Sect_\delta(Q)$. The figure indicates that the $\delta$-LMD condition is not trivial to check. For that purpose we introduce the following lemma:

\begin{lem} \label{lem:LMD}
    Let $\PP$ be a convex polygon and $Q \in \PP$ be one of its vertices. Let $\overline{Q} \in \R^2$ with $\|Q - \overline{Q}\| < \delta$ for some $\delta>0$. Assume that for some $r > 0 $ such that $\|Q\| > r$ it holds that 
    \[
        \frac{\langle Q, Q - P_j \rangle}{\|Q\|\|Q - P_j\|} \geq \delta/r,
    \]
    for all other vertices $P_j \in \PP \setminus Q$. Then $Q \in \PP$ is $\delta$-locally maximally distant with respect to $\overline{Q}$.
\end{lem}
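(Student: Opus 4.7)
The plan is, for any $A \in \Sect_\delta(\overline{Q})$, to show directly that $\|A\| < \|Q\|$ by travelling along the chord of $\PP$ through $Q$ and $A$. Since $A$ lies in $\PP^\circ$ and $Q$ on the boundary of $\PP$, this chord must exit $\PP$ at a second point $B$ lying on some edge $[P_j, P_k]$ with $P_j, P_k \neq Q$. I would parametrize $B = (1-s)P_j + sP_k$ for some $s \in [0,1]$ and $A = Q + t(B - Q)$ for some $t \in (0,1)$; note that $t \|Q - B\| = \|A - Q\|$.

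Expanding squared norms gives
\[
\|A\|^2 - \|Q\|^2 \;=\; t\bigl(-2\langle Q, Q-B\rangle + t\|Q-B\|^2\bigr),
\]
so $\|A\| < \|Q\|$ becomes equivalent to $2\langle Q, Q - B\rangle > t\|Q-B\|^2$. The left-hand side can be bounded from below using the hypothesis and convexity: by linearity in $B$,
\begin{align*}
\langle Q, Q - B \rangle &= (1-s)\langle Q, Q - P_j \rangle + s \langle Q, Q - P_k\rangle\\
&\geq \frac{\delta}{r}\,\|Q\|\bigl[(1-s)\|Q - P_j\| + s\|Q - P_k\|\bigr]\\
&\geq \frac{\delta}{r}\,\|Q\|\cdot \|Q - B\|,
\end{align*}
where the final step uses the triangle inequality on $Q - B = (1-s)(Q-P_j) + s(Q-P_k)$. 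Dividing the desired inequality by $\|Q - B\|$ and using $t\|Q-B\| = \|A - Q\|$ reduces the task to proving $\|A - Q\| < 2\delta \|Q\|/r$.

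To close the proof, I would combine two elementary bounds. First, $\|Q\| > r$ gives $2\delta\|Q\|/r > 2\delta$. Second, the triangle inequality together with $\|Q - \overline{Q}\| < \delta$ and $\|A - \overline{Q}\| < \delta$ (the latter since $A \in \Circ_\delta(\overline{Q})$) yields $\|A - Q\| < 2\delta$. Chaining these gives exactly $\|A - Q\| < 2\delta \|Q\|/r$, and hence $\|A\| < \|Q\|$.

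The main obstacle I anticipate is not the algebra but the geometric setup: one must justify that the ray from $Q$ through $A$ really exits through an edge $[P_j, P_k]$ with both endpoints different from $Q$, and that $t \in (0,1)$ strictly. Both are automatic because $A$ is an interior point while $Q$ and $B$ lie on $\partial \overline{\PP}$. The degenerate case $s \in \{0,1\}$, where $B$ itself is a vertex, is harmless since the convex-combination estimate collapses and the hypothesis on cosines applies directly at that vertex.
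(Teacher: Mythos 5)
Your proof is correct. At its core it is the same computation as the paper's: both expand $\|A\|^2-\|Q\|^2$ as a quadratic along the segment from $Q$ towards $A$ (your identity with $t$ and $B$ is the law of cosines in the paper's triangle $\Delta OQA$), and both conclude from the three facts $\cos(\angle(O,Q,A))\geq \delta/r$, $\|A-Q\|<2\delta$ and $\|Q\|>r$. The one place where you genuinely diverge is how the hypothesis on the vertices is transferred to the arbitrary point $A\in\Sect_\delta(\overline{Q})$: the paper simply asserts that $\angle(O,Q,A)$ is dominated by $\angle(O,Q,P_j)$ for some vertex $P_j$ because $A\in\PP^\circ$, whereas you extend the chord to its exit point $B$ on an edge $[P_j,P_k]$ with $P_j,P_k\neq Q$ and deduce $\langle Q,Q-B\rangle\geq \tfrac{\delta}{r}\|Q\|\,\|Q-B\|$ by linearity plus the triangle inequality (using that $\delta/r>0$). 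Your route makes the least obvious step of the paper's proof fully explicit at the cost of a small amount of extra setup (existence of $B$, $t\in(0,1)$, and the edge not being incident to $Q$, all of which you justify correctly since $A$ is interior while $Q$ and $B$ are boundary points); the paper's version is shorter but leaves that cone/angle comparison as a one-line observation.
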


\begin{proof}
    Let $A \in \Sect_\delta(\overline{Q}) = \Circ_\delta(\overline{Q}) \cap \PP^\circ$. We need to show that $\|A\| < \|Q\|$. By the cosine theorem in the triangle $\Delta OQA$ it holds that 
    \[
        \| A \|^2 = \|Q\|^2 + \|Q - A\|^2 - 2\|Q\|\|Q - A\|\cos(\angle(O,Q,A)).
    \]
    Or, equivalently, that 
    \[
        \|A\|^2 - \|Q\|^2 = \|Q - A\| \cdot (\|Q - A\| - 2\|Q\|\cos(\angle(O,Q,A))).
    \]
    Now observe that $\angle(O,Q,A) < \angle(O,Q,P_j)$ for some vertex $P_j \in \PP \setminus Q$ because $A \in \PP^\circ$. Moreover, recall that 
    \[
    \cos(\angle(O,Q,P_j)) = \frac{\langle Q, Q - P_j \rangle}{\|Q\|\|Q - P_j\|} \geq \delta/r,
    \]
    thus $\cos(\angle(O,Q,A)) \geq \delta/r$. We also have that $\|Q\| \geq r$ by assumption and $\|Q - A\| < 2\delta$ because both $Q, A \in \Circ_\delta(\overline{Q})$. We conclude that 
    \[
        \|Q - A\| - 2\|Q\|\cos(\angle(O,Q,A)) < 2\delta - 2r\cdot\delta/r = 0,
    \]
    therefore $\|A\|^2 - \|Q\|^2 < 0$ and $\|A\| < \|Q\|$
\end{proof}

We will also use the following fact:
\begin{lem} \label{lem:coss}
    Let $\epsilon>0$ and $\theta,\thetab, \phi, \phib \in \R$ with $|\theta - \overline{\theta}|, |\phi - \overline{\phi}| \leq \epsilon$. Define $M = M(\theta, \phi)$ and $\overline{M} = M(\thetab, \phib)$ and let $P, Q \in \R^3$ with $\|P\|, \|Q\| \leq 1$. Assume\footnote{In a first version of this manuscript we were lacking this assumption and we thank \href{https://dwrensha.ws/}{David Renshaw} for pointing~it~out.} that 
    \[
    \frac{\langle \overline{M} P,\overline{M} (P-Q)\rangle - 2 \epsilon \|P-Q\| \cdot  (\sqrt{2}+\varepsilon)}{ \big(\|\overline{M} P\|+\sqrt{2} \varepsilon \big) \cdot \big(\|\overline{M}(P-Q)\|+2 \sqrt{2} \varepsilon\big)} > 0, 
    \]
    then:
    \[
        \frac{\langle {M} P,{M} (P-Q)\rangle}{\|{M} P\| \cdot \|{M}(P-Q)\|} \geq 
        \frac{\langle \overline{M} P,\overline{M} (P-Q)\rangle - 2 \epsilon \|P-Q\| \cdot  (\sqrt{2}+\varepsilon)}{ \big(\|\overline{M} P\|+\sqrt{2} \varepsilon \big) \cdot \big(\|\overline{M}(P-Q)\|+2 \sqrt{2} \varepsilon\big)}. 
    \]
\end{lem}
\begin{proof}
We will show that the numerator of the left-hand side is bigger than the one of the right-hand side, and that the factors in the denominator are smaller. We also show that the left-hand side is well-defined as $MP, M(P-Q) \neq 0$.

We apply \cref{lem:absscalar} to $A = M, B = \overline{M}$ and $P_1 = P, P_2 = P-Q$:
\begin{align*}
|\langle {M} P,{M} (P-Q) \rangle - \langle \overline{M} P,\overline{M} (P-Q)\rangle | & \leq \|P\| \cdot \|P - Q\| \cdot \| M - \overline{M} \| \cdot (\|M\| + \|\overline{M}\| + \| M - \overline{M} \|) \\
& \leq 1 \cdot \| P - Q \| \cdot \sqrt{2}\epsilon \cdot (1 + 1 + \sqrt{2} \epsilon),
\end{align*}
and observe the claim for the numerator using the triangle inequality. In particular, $\langle {M} P,{M} (P-Q)\rangle > 0$, thus $MP$ and $M(P-Q)$ are non-zero, and the denominator does not~vanish.

For the first factor in the denominator we simply have 
\[
\| \overline{M} P \| \geq \| M P \| - \| MP - \overline{M}P \| \geq \| M P \| - \| M - \overline{M} \| \cdot \|P\| \geq \|MP\| - \sqrt{2} \epsilon,
\]
with the triangle inequality and \cref{lem:sqrt2}. For the other factor we obtain similarly 
\[
\| \overline{M} (P-Q) \| \geq \| M (P-Q) \| - \| M - \overline{M} \| \cdot \|P-Q\| \geq \|M(P-Q)\| - 2 \sqrt{2} \epsilon,
\]
using that $\|P-Q\| \leq 2$.
\end{proof}

\subsubsection{Congruent points}
\begin{dfn}
Two triples of points $P_1, P_2, P_3 \in \R^3$ and $Q_1, Q_2, Q_3 \in \R^3$ are called \emph{congruent} if there exists an orthonormal matrix $L$ such that $P_i = LQ_i$ for $i=1,2,3$.
\end{dfn}

\begin{lem} \label{lem:congruent}
    Let $P_1,P_2,P_3, Q_1,Q_2,Q_3 \in \R^3$. Define the $3 \times 3$ matrices $P \coloneqq (P_1|P_2|P_3)$ and $Q \coloneqq (Q_1|Q_2|Q_3)$ and assume that $Q$ is invertible. 
    Then $P_1, P_2, P_3$ and $Q_1, Q_2, Q_3$ are congruent if and only if $P^t P = Q^t Q$. 
\end{lem}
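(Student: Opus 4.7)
The statement is a standard linear-algebra fact, and both directions follow quickly from manipulating the defining equation $P = LQ$ (using the column-stacking convention). The main idea is that the Gram matrix $P^t P$ has $(i,j)$-entry $\langle P_i, P_j\rangle$, and orthogonal matrices are exactly the isometries, so an equality of Gram matrices is exactly an equality of pairwise inner products, which in turn corresponds to existence of an isometry mapping one triple onto the other.

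For the forward implication, I would start by observing that $P_i = L Q_i$ for $i=1,2,3$ is the same as the matrix identity $P = LQ$. Then
\[
    P^t P = (LQ)^t(LQ) = Q^t L^t L Q = Q^t Q,
\]
since $L$ orthogonal means $L^t L = \id$. This direction does not need invertibility of $Q$.

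For the backward implication, I would use the invertibility of $Q$ to define
\[
    L \coloneqq P Q^{-1} \in \R^{3 \times 3},
\]
which obviously satisfies $P = LQ$ and thus $P_i = L Q_i$ for $i=1,2,3$. The only thing left to check is that $L$ is orthogonal, which follows from
\[
    L^t L = (PQ^{-1})^t (P Q^{-1}) = (Q^{-1})^t P^t P Q^{-1} = (Q^{-1})^t Q^t Q Q^{-1} = (Q Q^{-1})^t (Q Q^{-1}) = \id,
\]
using the assumption $P^t P = Q^t Q$ at the third step.

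There is no real obstacle: the proof is essentially one computation in each direction. The only thing worth flagging is that the hypothesis ``$Q$ invertible'' is genuinely needed in the backward direction in order to define the candidate $L$ (otherwise there can be degenerate configurations where the $Q_i$'s lie in a proper subspace but the $P_i$'s don't, in which case no orthogonal $L$ exists); the forward direction does not use it.
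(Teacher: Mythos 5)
Your proof is correct and follows essentially the same route as the paper: the forward direction is the Gram-matrix computation $P^tP=(LQ)^t(LQ)=Q^tQ$, and the backward direction takes $L=PQ^{-1}$ and verifies $L^tL=\id$ using $P^tP=Q^tQ$, exactly as in the paper's argument. Your added remark on where invertibility of $Q$ is needed matches the paper's surrounding comment and is accurate.
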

Note that $P^t P = Q^t Q$ is equivalent to saying that $\langle P_i,P_j\rangle = \langle Q_i,Q_j\rangle$ for all $1 \leq i,j \leq 3$. Moreover, the condition on invertibility of $Q$ can be dropped, however then the proof becomes somewhat less straightforward.

\begin{proof}
    If $P_1, P_2, P_3$ and $Q_1, Q_2, Q_3$ are congruent then $\langle P_i,P_j\rangle = \langle LQ_i,LQ_j\rangle = \langle Q_i,Q_j\rangle$, thus $P^t P = Q^t Q$.
    For the other direction, we claim that $L \coloneqq PQ^{-1}$ is orthonormal and satisfies that $P_i = LQ_i$ for all $i=1,2,3$.  
    Indeed,  $L^t L = (PQ^{-1})^t (PQ^{-1}) = (Q^t)^{-1} P^t P Q^{-1} = \mathrm{Id}$ and it holds that $LQ=PQ^{-1}Q=P$, thus $LQ_i = P_i$.
\end{proof}

\subsection{Statement and proof of the local theorem}

\begin{thm}[Local Theorem] \label{thm:local}
    Let $\PPP$ be a polyhedron with radius $\rho=1$ and $P_1, P_2, P_3, Q_1, Q_2, Q_3 \in \PPP$ be not necessarily distinct. Assume that $P_1, P_2, P_3$ and $Q_1, Q_2, Q_3$ are congruent.

    Let $\epsilon>0$ and $\thetab_1,\phib_1,\thetab_2,\phib_2,\alphab \in \R$, then set $\Xib \coloneqq X(\thetab_1,\phib_1), \Xiib \coloneqq X(\thetab_2,\phib_2)$ as well as $\Mib \coloneqq M(\thetab_1,\phib_1), \Miib \coloneqq M(\thetab_2,\phib_2)$.
    Assume that there exist $\sigma_P, \sigma_Q \in \{0,1\}$ such that 
    \[
        (-1)^{\sigma_P} \langle \Xib,P_i\rangle>\sqrt{2}\varepsilon \quad \text{and} \quad
        (-1)^{\sigma_Q} \langle \Xiib , Q_i\rangle>\sqrt{2}\varepsilon, \tag{A$_\epsilon$}
    \]
    for all $i=1,2,3$.
    Moreover, assume that $P_1,P_2,P_3$ are $\epsilon$-spanning for $(\thetab_1,\phib_1)$ and that $Q_1,Q_2,Q_3$ are $\epsilon$-spanning for $(\thetab_2,\phib_2)$.
    Finally, assume that for all $i = 1,2,3$ and any $Q_j \in \PPP \setminus Q_i$ it holds~that 
    \[
        \frac{\langle \Miib Q_i,\Miib (Q_i-Q_j)\rangle - 2 \epsilon \|Q_i-Q_j\| \cdot  (\sqrt{2}+\varepsilon)}{ \big(\|\Miib Q_i\|+\sqrt{2} \varepsilon \big) \cdot \big(\|\Miib(Q_i-Q_j)\|+2 \sqrt{2} \varepsilon\big)} > \frac{\sqrt{5} \epsilon + \delta}{r}, \tag{B$_\epsilon$}
    \]
    for some $r >0$ such that $\min_{i=1,2,3}\| \Miib Q_i \| > r + \sqrt{2} \epsilon$ and for some $\delta \in \R$ with 
    \[
        \delta \geq \max_{i=1,2,3}\left\|R(\alphab) \Mib P_i - \Miib Q_i\right\|/2.
    \]
    Then there exists no solution to Rupert's problem $R(\alpha) M(\theta_1,\phi_1)\PPP \subset  M(\theta_2,\phi_2)\PPP^\circ$ with 
    \[
        (\theta_1, \phi_1, \theta_2, \phi_2, \alpha) \in [\thetab_1\pm\epsilon,\phib_1\pm\epsilon,\thetab_2\pm\epsilon,\phib_2\pm\epsilon,\alphab\pm\epsilon] \coloneqq U \subseteq \R^5.
    \]
\end{thm}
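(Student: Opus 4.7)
The plan is to argue by contradiction using \cref{lem:langles} in $\R^3$, with the vectors $V_1,V_2,V_3$ taken to be $P_1,P_2,P_3$, and with two unit vectors $Y,Z$ that both lie in $\spanp(P_1,P_2,P_3)$ and satisfy $\langle Y,P_i\rangle>\langle Z,P_i\rangle$ for $i=1,2,3$. Concretely, suppose $(\theta_1,\phi_1,\theta_2,\phi_2,\alpha)\in U$ is a solution of Rupert's problem and let $L$ be the orthonormal matrix from the congruence hypothesis, so that $P_i=LQ_i$. I would then set
\[
    Y\coloneqq(-1)^{\sigma_P}X(\theta_1,\phi_1), \qquad Z\coloneqq(-1)^{\sigma_Q}L\,X(\theta_2,\phi_2),
\]
which are unit vectors since $L$ is orthonormal.

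The first task is to place $Y$ and $Z$ inside $\spanp(P_1,P_2,P_3)$. Using (A$_\epsilon$) together with \cref{lem:XP>0} (and $|\thetab_i-\theta_i|,|\phib_i-\phi_i|\leq\epsilon$) one gets $(-1)^{\sigma_P}\langle X(\theta_1,\phi_1),P_i\rangle>0$ and similarly $(-1)^{\sigma_Q}\langle X(\theta_2,\phi_2),Q_i\rangle>0$. Since the $\epsilon$-spanning condition is invariant under simultaneously negating all three points, \cref{lem:eps-spanning} applied to $(-1)^{\sigma_P}P_1,(-1)^{\sigma_P}P_2,(-1)^{\sigma_P}P_3$ shows $Y\in\spanp(P_1,P_2,P_3)$; the analogous statement for $Q_1,Q_2,Q_3$ together with the linearity of $L$ and the identity $\spanp(LQ_1,LQ_2,LQ_3)=\spanp(P_1,P_2,P_3)$ places $Z\in\spanp(P_1,P_2,P_3)$ too.

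The second and technically heavier task is to show $\|M(\theta_1,\phi_1)P_i\|<\|M(\theta_2,\phi_2)Q_i\|$ for each $i$, which via \cref{lem:pythagoras} and the congruence $\|P_i\|=\|Q_i\|$ translates to $\langle X(\theta_1,\phi_1),P_i\rangle^2>\langle X(\theta_2,\phi_2),Q_i\rangle^2$, and hence (using the already established positive signs and that $\langle Z,P_i\rangle=\langle LX(\theta_2,\phi_2),LQ_i\rangle=\langle X(\theta_2,\phi_2),Q_i\rangle$ up to sign) to $\langle Y,P_i\rangle>\langle Z,P_i\rangle$. For the distance comparison, set $T_i\coloneqq (R(\alphab)\Mib P_i+\Miib Q_i)/2$. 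By the choice of $\delta$ and \cref{lem:inCirc}, both $R(\alpha)M(\theta_1,\phi_1)P_i$ and $M(\theta_2,\phi_2)Q_i$ lie in $\Circ_{\delta+\sqrt5\epsilon}(T_i)$. Using Rupert's inclusion, the first of these points lies in $M(\theta_2,\phi_2)\PPP^\circ$, hence in $\Sect_{\delta+\sqrt5\epsilon}(T_i)$ relative to the polygon $M(\theta_2,\phi_2)\PPP$. I would then show that $M(\theta_2,\phi_2)Q_i$ is $(\delta+\sqrt5\epsilon)$-LMD with respect to $T_i$: the lower bound on $\|M(\theta_2,\phi_2)Q_i\|$ follows from $\|\Miib Q_i\|>r+\sqrt2\epsilon$ and \cref{lem:sqrt2}, and the angular condition needed by \cref{lem:LMD} is exactly what condition (B$_\epsilon$) provides after \cref{lem:coss} is applied with $M=M(\theta_2,\phi_2)$, $\overline M=\Miib$, $P=Q_i$, $Q=Q_j$. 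The LMD property then yields $\|R(\alpha)M(\theta_1,\phi_1)P_i\|<\|M(\theta_2,\phi_2)Q_i\|$, and since $R(\alpha)$ is an isometry this gives the desired comparison.

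With steps one and two in place, \cref{lem:langles} applied to $V_i=P_i$, $Y$, $Z$ is violated, so no such solution $(\theta_1,\phi_1,\theta_2,\phi_2,\alpha)\in U$ can exist. The main obstacle I expect is step two: chaining \cref{lem:inCirc}, \cref{lem:coss}, and \cref{lem:LMD} correctly so that the ``radius'' of the LMD disc at $T_i$ is large enough to contain the perturbed projection $R(\alpha)M(\theta_1,\phi_1)P_i$ (hence the factor $\sqrt5\epsilon+\delta$ in condition (B$_\epsilon$)), while the lower bound $r$ on $\|\Miib Q_i\|$ is kept compatible with \cref{lem:sqrt2}. Everything else is bookkeeping around the signs $\sigma_P,\sigma_Q$ and the orthonormal change of basis by $L$.
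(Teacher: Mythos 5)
Your proposal follows essentially the same argument as the paper: the same contradiction via \cref{lem:langles} combined with \cref{lem:pythagoras}, the same midpoints $T_i$ with \cref{lem:inCirc}, \cref{lem:coss} and \cref{lem:LMD} giving the $(\delta+\sqrt{5}\epsilon)$-LMD property, and the same use of \cref{lem:XP>0} and \cref{lem:eps-spanning} for the spanning step. The only difference is cosmetic bookkeeping — you absorb the signs $(-1)^{\sigma_P},(-1)^{\sigma_Q}$ into $Y$ and $Z$ while the paper absorbs them into the points $\overline{P_i}=(-1)^{\sigma_P}P_i$, $\overline{Q_i}=(-1)^{\sigma_Q}Q_i$ and the matrix $K=(-1)^{\sigma_P+\sigma_Q}L$ — which is equivalent.
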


Before we prove this statement, we wish to provide an example. 
\begin{example}
Consider the Octahedron $\OOO = \{ (\pm 1, 0,0), (0, \pm1,0),(0,0,\pm1) \} \subseteq \R^3$ and the projection direction $(\thetab, \phib) = (\pi/4,\tan^{-1}(\sqrt{2}))$. The polygon $M(\thetab, \phib) \OOO$ is depicted in Figure~\ref{fig:octahedron}. We enumerate the vertices $O_1,\dots,O_6$ of $\OOO$ in such a way that $O_1,O_2,O_3$ have nonnegative coordinates. The goal of this example is to use \cref{thm:local} to show that there exists an $\epsilon>0$ such that there is no solution $\Psi$ to Rupert's problem for $\OOO$ with 
\[
\Psi = (\theta_1, \phi_1, \theta_2, \phi_2, \alpha) \in U = [\phib \pm \epsilon, \thetab \pm \epsilon, \phib \pm \epsilon, \thetab \pm \epsilon, \pm \epsilon].
\]

\begin{figure}
    \centering
    \includegraphics[width=\linewidth]{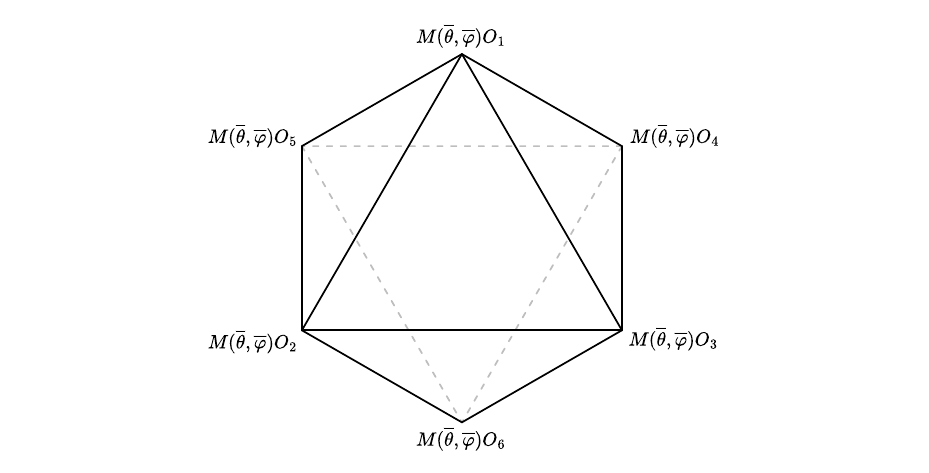}
    \caption{The projection of the octahedron $\OOO$ in direction $(\thetab, \phib) = (\pi/4,\tan^{-1}(\sqrt{2})$.}
    \label{fig:octahedron}
\end{figure}

In order to apply \cref{thm:local} we choose $P_i=Q_i = O_i$ for $i=1,2,3$ as well as $\thetab_1 = \thetab_2 = \thetab = \pi/4$, $\phib_1 = \phib_2 = \phib = \tan^{-1}(\sqrt{2})$ and $\alphab=0$. This means that
\[
\Mib = \Miib = \begin{pmatrix}
    -\sqrt{2}/2 & \sqrt{2}/2 & 0 \\
    -\sqrt{6}/6 & -\sqrt{6}/6 & \sqrt{6}/3
\end{pmatrix} \quad \text{ and } \quad \Xib = \Xiib = \begin{pmatrix} 1 & 1 & 1 \end{pmatrix}^{t}/\sqrt{3}.
\]
Since $\langle \Xib, O_i \rangle = 1/\sqrt{3}$ for $i=1,2,3$ it follows that $\sigma_P = \sigma_Q=0$ and $(A_\epsilon)$ is fulfilled for epsilon small enough. In Figure~\ref{fig:octahedron} this simply means that the points $\Mib O_i$ for $i=1,2,3$ are ``in front'' of the projection. 

Next we need to check that $O_1,O_2,O_3$ are $\epsilon$-spanning for $(\thetab, \phib)$. Heuristically speaking, this simply means that the origin is inside the triangle $\Delta(\Mib O_1,\Mib O_2,\Mib O_3)$ even after exchanging $\thetab,\phib$ with any possible $(\theta, \phi) \in [\thetab \pm \epsilon,\phib \pm \epsilon]$. To verify this rigorously, we compute
\[
\langle R(\pi/2) \Mib O_1, \Mib O_2\rangle = 1/\sqrt{3},
\]
and similarly for the other two conditions in \cref{def:eps-spanning}. Thus, the $\epsilon$-spanning condition is satisfied as long as $1/\sqrt{3} > 2 \epsilon(\sqrt{2} + \epsilon)$.

For the last condition (B$_\epsilon$) if we fix without loss of generality $Q_i = O_1$ the left-hand side becomes:
\begin{align*}
  \frac{\langle \Miib O_1,\Miib (O_1-O_j)\rangle - 2 \epsilon \|O_1-O_j\| \cdot  (\sqrt{2}+\varepsilon)}{ \big(\|\Miib O_1\|+\sqrt{2} \varepsilon \big) \cdot \big(\|\Miib(O_1-O_j)\|+2 \sqrt{2} \varepsilon\big)}  = 
  \begin{cases}
  \frac{1-2 \epsilon  \sqrt{2}\, (\sqrt{2}+\epsilon )}{(\sqrt{6}/{3}+\epsilon  \sqrt{2}) (\sqrt{2}+2 \epsilon  \sqrt{2})} \quad \text{ for } j=2,3,\\
  \frac{{1}/{3}-2 \epsilon  \sqrt{2}\, (\sqrt{2}+\epsilon )}{({\sqrt{6}}/{3}+\epsilon  \sqrt{2}) \left({\sqrt{6}}/{3}+2 \epsilon  \sqrt{2}\right)} \quad \text{ for } j=4,5,\\
  \frac{{4}/{3}-4 \epsilon  (\sqrt{2}+\epsilon )}{({\sqrt{6}}/{3}+\epsilon  \sqrt{2}) ({2 \sqrt{6}}/{3}+2 \epsilon  \sqrt{2})} \quad \text{ for } j=6.
  \end{cases}
\end{align*}
Note that for $\epsilon$ small these expressions are approximately $\sqrt{3}/2, 1/2$ and $1$ respectively. For the right-hand side note that $\delta=0$ and if we choose $r < \|\Mib O_i\| = \sqrt{6}/3$ there will exist $\epsilon>0$ which satisfies $\|\Mib O_i\| > r + \sqrt{2}\epsilon$. Moreover, since $\sqrt{5}\epsilon/r$ becomes arbitrarily small, (B$_\epsilon$) holds for small enough $\epsilon$. Following \cref{sec:lmds} this computation simply shows that $\Mib O_i$ are locally maximally distant points (also after exchanging $\thetab,\phib$ with any possible $\theta, \phi \in [\thetab \pm \epsilon, \phib \pm \epsilon]$), because the angles $\angle (O,O_1,O_j)$ for $j=2,3,4,5,6$ in Figure~\ref{fig:octahedron} are approximately $60^\circ, 30^\circ$, or $0^\circ$, in particular acute.

Altogether we have have proven that there is no ``local'' solution to Rupert's problem for the Octahedron from the direction $(\thetab, \phib) = (\pi/4,\tan^{-1}(\sqrt{2})$. We can also effectively choose, for example, $r=0.7$ and $\epsilon=0.05$ to satisfy all appearing inequalities.

We also mention that if $(\thetab_1, \phib_1) = (\pi/4, \tan^{-1}(\sqrt{2}))$ but $(\thetab_2, \phib_2) = (\pi/4, \pi - \tan^{-1}(\sqrt{2}))$ then the two projections look still like in Figure~\ref{fig:octahedron}, however then we would need to take $P_i = O_i$ for $i=1,2,3$ but $(Q_1,Q_2,Q_3) = (O_1, O_5, O_4)$ and verify that these points are congruent. Also, in this case, $\sigma_Q = 1$.

Finally, note that the proof of \cref{thm:local} crucially uses \cref{lem:langles} in combination with \cref{lem:pythagoras}, which in the context of this example simply state that the three distances $\|MO_1\|$, $\|MO_2\|$, $\|MO_3\|$ cannot all (locally) simultaneously increase. An even simpler way to see this exists, since $O_1, O_2, O_3$ are orthogonal to each other and of equal length -- in this case it holds that 
\[
    \|MO_1\|^2 + \|MO_2\|^2 + \|MO_3\|^2 = 2.
\]
\end{example}

\bigskip

\begin{proof}[Proof of \cref{thm:local}]
    Assume by contradiction that there exists a $\Psi = (\theta_1, \phi_1, \theta_2, \phi_2, \alpha) \in U$ which solves Rupert's problem. By assumption $(P_1,P_2,P_3)$ and $(Q_1,Q_2,Q_3)$ are congruent, thus there exists an orthonormal matrix $L \in \R^{3 \times 3}$ with $P_i  = LQ_i$ for all $i=1,2,3$. Set $Y \coloneqq X(\theta_1, \phi_1)$ and $Z \coloneqq K X(\theta_2, \phi_2)$, where $K \coloneqq (-1)^{\sigma_P + \sigma_Q}L$, so it is easy to see that $\|Y\|=\|Z\|=1$. Moreover, let $\overline{P_i} = (-1)^{\sigma_P} P_i$ and $\overline{Q_i} = (-1)^{\sigma_Q} Q_i$, so that $\overline{P_i} = K\overline{Q_i}$. 
    
    We shall show that 
        \begin{align} \label{eq:YZinspanp_fin}
            Y,Z \in \spanp(\overline{P_1},\overline{P_2},\overline{P_3})
        \end{align}
        and that
        \begin{align} \label{eq:angles_fin} 
            \langle Y, \overline{P_i} \rangle > \langle Z, \overline{P_i} \rangle \text{ for each } i=1,2,3,
        \end{align}
        then \cref{lem:langles} yields the desired contradiction. \\    

    First observe that (A$_\epsilon$) and \cref{lem:XP>0} imply that
    \begin{equation} \label{eq:YPiZPi>0_fin}
        \langle X(\theta_1, \phi_1), \overline{P_i} \rangle, \langle X(\theta_2, \phi_2), \overline{Q_i} \rangle > 0.
    \end{equation}
    Now (\ref{eq:YZinspanp_fin}) for $Y = X(\theta_1, \phi_1)$ follows from \cref{lem:eps-spanning}. By the same argument we also get that 
    \[
        X(\theta_2, \phi_2) \in \spanp(\overline{Q_1},\overline{Q_2},\overline{Q_3}).
    \]
    However, then (\ref{eq:YZinspanp_fin}) also immediately follows for $Z = K X(\theta_2, \phi_2)$ as $\overline{P_i} = K \overline{Q_i}$.\\

    We proceed with proving (\ref{eq:angles_fin}). First note that by assumption $\| \Miib Q_i \| > r + \sqrt{2} \epsilon$, thus  \cref{lem:MP>r} shows that 
    \(
        \| M(\theta_2, \phi_2) Q_i \| > r.
    \)
    Define 
    \[
        T_i \coloneqq \left(R(\alphab) \Mib P_i + \Miib Q_i\right)/2 \in \R^2,
    \]
    for $i=1,2,3$ so that by assumption $\|T_i - \Miib Q_i\| \leq \delta$. Then also set
    \[
        \Circ_i \coloneqq \Circ_{\delta + \sqrt{5} \epsilon}(T_i) = \{ x \in \R^2: \| x - T_i \| < \delta + \sqrt{5} \epsilon \}.
    \] 
    By \cref{lem:inCirc} it holds that $R(\alpha)M(\theta_1, \phi_2) P_i, M(\theta_2, \phi_2) Q_i \in \Circ_i$.
    Moreover, as the right-hand side in (B$_\epsilon$) is positive, \cref{lem:coss} implies that 
    \[
    \frac{\langle M(\theta_2, \phi_2) Q_i,M(\theta_2, \phi_2) (Q_i-Q_j)\rangle}{\|M(\theta_2, \phi_2) Q_i\| \cdot \|M(\theta_2, \phi_2)(Q_i-Q_j)\|} \geq 
    \frac{\langle \Miib Q_i,\Miib (Q_i-Q_j)\rangle - 2 \epsilon \|Q_i-Q_j\| \cdot  (\sqrt{2}+\varepsilon)}{ \big(\|\Miib Q_i\|+\sqrt{2} \varepsilon \big) \cdot \big(\|\Miib(Q_i-Q_j)\|+2 \sqrt{2} \varepsilon\big)},
    \]
    Thus, using assumption (B$_\epsilon$), the left-hand side of this inequality is also larger than $(\delta + \sqrt{5}\epsilon)/r$. \cref{lem:LMD} now implies that $M(\theta_2, \phi_2) Q_i \in M(\theta_2, \phi_2)\PPP$ is $(\delta + \sqrt{5}\epsilon$)-LMD with respect to $T_i$. We obtain that for any $A \in \Circ_i \cap (M(\theta_2, \phi_2) \PPP)^\circ$ it holds that $\|A\| < \|M(\theta_2,\varphi_2)Q_i\|$.

    Recall that by contradiction we assumed that $\Psi$ is a solution to Rupert's problem, thus, in particular, $R(\alpha)M(\theta_1,\varphi_1)P_i \in (M(\theta_2, \phi_2) \PPP)^\circ$ and it follows that $A = R(\alpha)M(\theta_1,\varphi_1)P_i \in \Circ_i \cap (M(\theta_2, \phi_2) \PPP)^\circ$.
    We conclude that $ \| M(\theta_1,\varphi_1)P_i \| < \|M(\theta_2,\varphi_2)Q_i\|$. Now \cref{lem:pythagoras} implies that $ \langle X(\theta_1, \phi_1), P_i \rangle^2 > \langle X(\theta_2, \phi_2), Q_i \rangle^2$ and (\ref{eq:YPiZPi>0_fin}) implies that 
    \[
        \langle X(\theta_1, \phi_1), \overline{P_i} \rangle > \langle X(\theta_2, \phi_2), \overline{Q_i} \rangle.
    \]
    We conclude that (\ref{eq:angles_fin}) holds because
    \[
        \langle Y, \overline{P_i} \rangle = \langle X(\theta_1, \phi_1), \overline{P_i} \rangle>
        \langle X(\theta_2, \phi_2), \overline{Q_i} \rangle
        = 
        \langle K X(\theta_2, \phi_2), K\overline{Q_i} \rangle
        = 
        \langle Z, \overline{P_i} \rangle. \qedhere
    \]
\end{proof}

\section{Rational versions} \label{sec:rational}

We explained in \cref{sec:outline} that \cref{thm:global} and \cref{thm:local} cannot be used directly on a computer with floating point arithmetic. This is because the appearing inequalities in the assumptions are in practice very tight and finite machine precision on 64 bits cannot rigorously guarantee their validity. For this reason in this section we will prove adaptations of the two theorems in which we will approximate all real expressions with rational numbers. We set $\kappa \coloneqq 10^{-10}$ that  will be used throughout the remaining text, and we will write $\widetilde{x}\in \mathbb{Q}^{m\times n}$ for a rational approximation of $x\in \mathbb{R}^{m \times n}$ meaning that $\|x-\widetilde{x}\|\leq \kappa$. Moreover, $\widetilde{\PPP} \subseteq \Q^3$ is a $\kappa$-rational approximation of $\PPP \subseteq \R^3$ if each $\widetilde{P}_i \in \widetilde{\PPP}$ is a $\kappa$-rational approximation of the corresponding ${P}_i \in {\PPP}$.

\subsection{Definitions and lemmas}

\begin{dfn}
    We define the two functions $\ssin, \scos: \R \to \R$ by:
    \begin{align*}
    \ssin(x) & \coloneqq x-\frac{x^3}{3}+\frac{x^5}{5!}\mp \dots +\frac{x^{25}}{25!},\\
    \scos(x) & \coloneqq 1-\frac{x^2}{2}+\frac{x^4}{4!}\mp\dots +\frac{x^{24}}{24!}.
    \end{align*}
    Further, by replacing $\sin,\cos$ with $\ssin,\scos$ we define the functions 
\[
    R_{\Q}(\alpha), R'_{\Q}(\alpha), X_{\Q}(\theta, \phi), M_{\Q}(\theta, \phi), M_{\Q}^{\theta}(\theta,\varphi),M_{\Q}^{\phi}(\theta,\varphi).
\]
\end{dfn}
Note that $\ssin, \scos$ map $\Q$ to $\Q$. Moreover, by the following lemma they are good approximations for $\sin, \cos$ for input values in $[-4,4]$:

\begin{lem} \label{lem:kappa/7}
    For every $x\in [-4,4]$ it holds that
    \[
    |\ssin(x)-\sin(x)| \leq \frac{\kappa}{7} \quad \text{and} \quad |\scos(x)-\cos(x)|\leq \frac{\kappa}{7}.
    \]
\end{lem}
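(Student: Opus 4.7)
The plan is to use the alternating-series remainder estimate. Both $\ssin$ and $\scos$ are truncated Taylor polynomials of $\sin$ and $\cos$ around $0$, so I would write
\[
    \sin(x) - \ssin(x) = \sum_{k=13}^{\infty} \frac{(-1)^{k} x^{2k+1}}{(2k+1)!},
    \qquad
    \cos(x) - \scos(x) = \sum_{k=13}^{\infty} \frac{(-1)^{k} x^{2k}}{(2k)!},
\]
and show that for $|x| \leq 4$ each series is Leibniz-type, hence bounded in absolute value by its first term.

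First I would verify monotonic decrease of the magnitudes of the tail terms. For the sine tail the ratio of the $(k+1)$-st to the $k$-th term in absolute value is $|x|^{2}/((2k+2)(2k+3))$, which for $|x|\leq 4$ and $k\geq 13$ is at most $16/(28\cdot 29) < 1$; the same computation works for the cosine tail with denominator $(2k+1)(2k+2)$. Since the terms also alternate in sign and tend to $0$, the standard alternating series estimation theorem yields
\[
    |\sin(x) - \ssin(x)| \leq \frac{|x|^{27}}{27!} \leq \frac{4^{27}}{27!},
    \qquad
    |\cos(x) - \scos(x)| \leq \frac{|x|^{26}}{26!} \leq \frac{4^{26}}{26!}.
\]

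Finally, I would check that both upper bounds are at most $\kappa/7 = 10^{-10}/7$. Since $4^{27} = 2^{54}$ and $27! > 10^{28}$, one has $4^{27}/27! < 2\cdot 10^{-12} < \kappa/7$; similarly $4^{26}/26! < 2\cdot 10^{-11}/7$ after an explicit numerical estimate (or one notes $4^{26}/26! = 27 \cdot 4^{27}/(27!\cdot 4)$ and compares directly). The only potentially subtle point is the monotonicity verification at the first index $k=13$, but the bound $16/(28\cdot 29) < 1$ is obvious, so there is no real obstacle; the rest is a direct numerical comparison which I would just record at the end of the proof.
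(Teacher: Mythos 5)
Your proposal follows essentially the same route as the paper: the paper simply quotes the remainder bounds $|\ssin(x)-\sin(x)|\leq |x|^{27}/27!$ and $|\scos(x)-\cos(x)|\leq |x|^{26}/26!$ as well known and then compares numbers, while you justify these bounds via the alternating-series estimate, correctly verifying that the tail terms decrease in magnitude for $|x|\leq 4$ from the first omitted term on (a check that is genuinely needed since $|x|$ may exceed $1$). One correction, though: your intermediate claim $4^{26}/26! < 2\cdot 10^{-11}/7$ is numerically false, since $4^{26}/26!\approx 1.12\cdot 10^{-11}$ while $2\cdot 10^{-11}/7\approx 2.9\cdot 10^{-12}$. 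What you actually need, and what does hold, is $4^{26}/26!\approx 1.12\cdot 10^{-11} < \kappa/7\approx 1.43\cdot 10^{-11}$; alternatively, your parenthetical route is fine: $4^{26}/26! = \tfrac{27}{4}\cdot 4^{27}/27! < \tfrac{27}{4}\cdot 2\cdot 10^{-12} = 1.35\cdot 10^{-11} < \kappa/7$. Since the cosine estimate is the binding one (the margin is only roughly $20\%$), the final numerical comparison there should be stated exactly rather than waved at.
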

\begin{proof}
    It is well known and not difficult to prove that for any $x \in \mathbb{R}$ one has
    \[
        |\ssin(x)-\sin(x)|\leq \frac{|x|^{27}}{27!} \quad \text{as well as} \quad
    |\scos(x)-\cos(x)|\leq \frac{|x|^{26}}{26!}.
    \]
    As 
    ${4^{26}/26!}\approx 1.1\cdot 10^{-11}$ and ${4^{27}/27!}\approx 1.7\cdot 10^{-12}$ and $\kappa/7\approx 1.4\cdot 10^{-11}$ the claim is proven.
\end{proof}

The following useful lemma is an elementary bound on $\|A\|$ in terms of the entries of $A$:

\begin{lem} \label{lem:A<deltamn}
Let $A = (a_{i,j})_{1 \leq i \leq m,\ 1 \leq j \leq n} \in \R^{m \times n}$ and $\delta >0$. Assume that $|a_{i,j}| \leq \delta$. Then it holds that $\|A\| \leq \delta \sqrt{mn}.$
\end{lem}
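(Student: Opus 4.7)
The plan is to bound the operator norm by the Frobenius norm and then bound the Frobenius norm using the entrywise bound. This is a standard computation and should present no real obstacle.

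First I would take an arbitrary unit vector $v = (v_1,\dots,v_n)^t \in \R^n$ and write out the $i$-th coordinate of $Av$ as $(Av)_i = \sum_{j=1}^n a_{i,j} v_j$. By the Cauchy--Schwarz inequality,
\[
    |(Av)_i|^2 \leq \Big(\sum_{j=1}^n a_{i,j}^2\Big)\Big(\sum_{j=1}^n v_j^2\Big) = \sum_{j=1}^n a_{i,j}^2,
\]
since $v$ is a unit vector. Summing over $i$ yields
\[
    \|Av\|^2 = \sum_{i=1}^m |(Av)_i|^2 \leq \sum_{i=1}^m \sum_{j=1}^n a_{i,j}^2.
\]

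Next I would use the assumption $|a_{i,j}| \leq \delta$ to bound each of the $mn$ summands by $\delta^2$, giving $\|Av\|^2 \leq mn \delta^2$ and hence $\|Av\| \leq \delta \sqrt{mn}$. Since this holds for every unit vector $v$, taking the supremum yields $\|A\| \leq \delta \sqrt{mn}$, as claimed. In effect this is just the well-known inequality $\|A\|_{\mathrm{op}} \leq \|A\|_F$ combined with the trivial bound $\|A\|_F^2 \leq mn \delta^2$, so the whole argument fits into a few lines.
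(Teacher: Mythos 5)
Your proof is correct and essentially the same as the paper's: both bound $\|Av\|^2$ for a unit vector $v$ using the entrywise bound together with the Cauchy--Schwarz inequality. The only cosmetic difference is the order of the two steps — you apply Cauchy--Schwarz to each row first (the operator-norm-versus-Frobenius-norm route), while the paper first replaces each $a_{i,j}$ by $\delta$ and then uses Cauchy--Schwarz in the form $\bigl(\sum_{j}|v_j|\bigr)^2 \leq n\|v\|^2$; both yield the identical bound $\delta\sqrt{mn}$.
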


\begin{proof}
    For any $v\in \R^n$ we have
    \begin{align*}
        \|Av\|^2 &=\sum_{i=1}^m \left(\sum_{j=1}^na_{i,j}v_j\right)^2 \leq \sum_{i=1}^m\left(\sum_{j=1}^n \delta |v_j|\right)^2 = \delta^2 m\left(\sum_{j=1}^n |v_j|\right)^2 \leq \delta^2 m n \|v\|^2
    \end{align*}
    using the Cauchy-Schwarz inequality. Dividing by $\|v\|$ and taking the square root proves the claim.
\end{proof}
    
\begin{lem}\label{lem:dist<kappa}
    Let $A(x,y)$ be an $m\times n$ matrix with $1 \leq m,n\leq 3$ such that every entry is of the form $a_1(x)\cdot a_2(y)$ where 
    $a_i(z)\in \{0,1,-1,\pm\sin(z),\pm\cos(z)\}.$
    Define $A_{\mathbb{Q}}(x,y)$ by replacing $\sin$ with $\ssin$ and $\cos$ with $\scos$. Then for every $x,y\in[-4,4]$ it holds that
    $\|A(x,y)-A_\mathbb{Q}(x,y)\|\leq\kappa$.
\end{lem}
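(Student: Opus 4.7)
The plan is a straightforward entry-by-entry comparison, combined with the operator norm bound in terms of matrix entries supplied by \cref{lem:A<deltamn}. First I would fix an arbitrary entry of $A(x,y)$, which by hypothesis has the form $a_1(x) a_2(y)$ with each $a_i(z) \in \{0, 1, -1, \pm\sin(z), \pm\cos(z)\}$. Writing $\widetilde{a_i}$ for the function obtained by replacing $\sin$ with $\ssin$ and $\cos$ with $\scos$ (so $\widetilde{a_i}=a_i$ whenever $a_i$ is constant), the corresponding entry of $A_\Q(x,y)$ is exactly $\widetilde{a_1}(x)\widetilde{a_2}(y)$.

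Next I would estimate each entrywise difference with the standard add-and-subtract identity
\[
a_1(x) a_2(y) - \widetilde{a_1}(x)\widetilde{a_2}(y) = a_1(x)\bigl(a_2(y) - \widetilde{a_2}(y)\bigr) + \bigl(a_1(x) - \widetilde{a_1}(x)\bigr)\widetilde{a_2}(y).
\]
By \cref{lem:kappa/7}, for $x,y \in [-4,4]$ each of the factors $a_i(z)-\widetilde{a_i}(z)$ has magnitude at most $\kappa/7$ (and is exactly zero when $a_i$ is constant). Since $|a_1(x)| \le 1$ and $|\widetilde{a_2}(y)| \le 1 + \kappa/7$, the triangle inequality gives
\[
|a_1(x) a_2(y) - \widetilde{a_1}(x)\widetilde{a_2}(y)| \le \frac{\kappa}{7} + \frac{\kappa}{7}\left(1 + \frac{\kappa}{7}\right) = \frac{2\kappa}{7} + \frac{\kappa^2}{49}.
\]

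Finally I would apply \cref{lem:A<deltamn} with $\delta = 2\kappa/7 + \kappa^2/49$ and $mn \le 9$ to conclude
\[
\|A(x,y) - A_\Q(x,y)\| \le \left(\frac{2\kappa}{7} + \frac{\kappa^2}{49}\right)\sqrt{mn} \le \frac{6\kappa}{7} + \frac{3\kappa^2}{49} \le \kappa,
\]
where the last inequality uses $\kappa = 10^{-10}$, so the quadratic correction $3\kappa^2/49$ is vastly smaller than the remaining slack $\kappa/7$. There is no real obstacle; the only point to check is that the numerical margin $1 - 6/7 = 1/7$ comfortably swallows the $O(\kappa^2)$ term, which it does by many orders of magnitude.
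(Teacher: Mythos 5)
Your proposal is correct and follows essentially the same route as the paper: an entrywise add-and-subtract decomposition bounded via \cref{lem:kappa/7}, followed by \cref{lem:A<deltamn} with $\sqrt{mn}\leq 3$. The only difference is bookkeeping — the paper rounds the entrywise bound up to $\kappa/3$ before multiplying by $3$, while you carry the exact $2\kappa/7+\kappa^2/49$ and check the final margin, which amounts to the same argument.
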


\begin{proof} 
    For fixed $x,y$ every entry of $A(x,y)-A_{\mathbb{Q}}(x,y)$ is of the form
    $a b - \widetilde{a}\widetilde{b}$ for some $a,b\in[-1,1]$ and $|a-\widetilde{a}|,|b-\widetilde{b}|\leq \kappa/7$ by \cref{lem:kappa/7}. This implies that
    \begin{align*}
      |ab-\widetilde{a}\widetilde{b}|&\leq |a b-a\widetilde{b}|+|a \widetilde{b}-\widetilde{a}\widetilde{b}|
      =|a|\cdot |b-\widetilde{b}|+|\widetilde{b}|\cdot |a-\widetilde{a}| \leq 1\cdot \kappa/7+(1+\kappa/7) \cdot \kappa/7 <\kappa/3.
    \end{align*}
    So we can apply \cref{lem:A<deltamn} and obtain that $\|A(x,y)-A_{\Q}(x,y)\|<\kappa/3\cdot \sqrt{3\cdot 3}=\kappa$.
\end{proof}

Now note that all the functions 
\[
    R_{\Q}(\alpha), R'_{\Q}(\alpha), X_{\Q}(\theta, \phi), M_{\Q}(\theta, \phi), M_{\Q}^{\theta}(\theta,\varphi),M_{\Q}^{\phi}(\theta,\varphi)
\]
are of the form as in \cref{lem:dist<kappa}, hence we get as a corollary:

\begin{corr} \label{corr:kappa1kappa}
    Let $\alpha,\theta,\phi\in [-4,4]$. Then it holds that
    \begin{align*}
        \|R(\alpha)-R_{\Q}(\alpha)\|, \|R'(\alpha)-R_{\Q}'(\alpha)\|,\|X(\theta,\phi)-X_{\Q}(\theta, \phi)\|, \|M(\theta, \phi)-M_{\Q}(\theta, \phi)\|, \\
        \|M^\theta(\theta,\phi)-M_{\Q}^\theta(\theta,\phi)\|,
        \|M^\phi(\theta,\phi) - M_{\Q}^\phi(\theta,\phi)\|\leq \kappa.
    \end{align*}
    Moreover,
    \[
        \|R_{\Q}(\alpha)\|, \|R'_{\Q}(\alpha)\|, \|X_{\Q}(\theta, \phi)\|, \|M_{\Q}(\theta, \phi)\|, \|M_{\Q}^{\theta}(\theta,\varphi)\|, \|M_{\Q}^{\phi}(\theta,\varphi)\| \leq 1+\kappa.
    \]
\end{corr}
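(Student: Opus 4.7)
The plan is to derive both batches of inequalities by chaining \cref{lem:dist<kappa} with the norm bounds established earlier in \cref{sec:bounds} (primarily \cref{lem:RaRalpha}) and the factorization identities collected in \cref{sec:derivatives}. The whole statement is essentially bookkeeping.

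\textbf{First batch (approximation bounds).} The first step is to verify that each of the six matrix-valued functions
\[
    R(\alpha),\ R'(\alpha),\ X(\theta,\phi),\ M(\theta,\phi),\ M^\theta(\theta,\phi),\ M^\phi(\theta,\phi)
\]
satisfies the hypothesis of \cref{lem:dist<kappa}, i.e.\ every entry is a product $a_1(x)\cdot a_2(y)$ with $a_1, a_2 \in \{0, 1, -1, \pm\sin, \pm\cos\}$. For the one-variable matrices $R(\alpha)$ and $R'(\alpha)$ I simply write each entry as $a_1(\alpha)\cdot 1$ (here the "dummy" factor $a_2 \equiv 1$ is allowed since $1$ is in the admissible list). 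For $X(\theta,\phi)$ the entries are already $\cos\theta\sin\phi$, $\sin\theta\sin\phi$, and $1\cdot\cos\phi$. The same inspection of the explicit formulas for $M(\theta,\phi), M^\theta(\theta,\phi), M^\phi(\theta,\phi)$ given in the paper shows that each nonzero entry is either of the form $(\pm\sin\text{ or }\pm\cos)(\theta)\cdot 1$, $1\cdot\sin\phi$, or a genuine product $(\pm\sin\text{ or }\pm\cos)(\theta)\cdot(\pm\sin\text{ or }\pm\cos)(\phi)$. Since the domain $[-4,4]$ is assumed by hypothesis, \cref{lem:dist<kappa} yields the six $\kappa$-bounds simultaneously.

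\textbf{Second batch (norm bounds).} For the bounds $\|A_\Q\| \leq 1 + \kappa$ the plan is to combine the first batch with the triangle inequality
\[
    \|A_\Q\| \leq \|A\| + \|A - A_\Q\| \leq \|A\| + \kappa,
\]
and then establish $\|A\| \leq 1$ in each of the six cases. \cref{lem:RaRalpha} already provides $\|R(\alpha)\| = \|M(\theta,\phi)\| = 1$, and $\|X(\theta,\phi)\| = 1$ holds directly since $X(\theta,\phi)$ is a unit vector by \cref{def:RXM}. For the remaining three matrices I invoke the factorization identities stated right after the definition of the derivative matrices:
\[
    M^\theta(\theta,\phi) = -Z\, R_y(\phi)\, R_z'(-\theta), \qquad M^\phi(\theta,\phi) = Z\, R_y'(\phi)\, R_z(-\theta),
\]
where $Z$ is the $2\times 3$ matrix of norm $1$ from identity (\ref{eq:XMRfromR2}). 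Combining submultiplicativity with the fact (noted just after their definitions) that every $R_a'(\alpha)$, being a derivative of a rotation about an axis, has unit operator norm, yields $\|M^\theta\|, \|M^\phi\| \leq 1$ and $\|R'(\alpha)\| = 1$.

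\textbf{Main obstacle.} There is essentially none; the corollary is a direct chaining of \cref{lem:RaRalpha} and \cref{lem:dist<kappa} once the six matrices are observed to fit the template of the latter. The only mildly subtle point is the one-variable case, where one must legitimately introduce the dummy factor $a_2\equiv 1$ in order to reduce to \cref{lem:dist<kappa} — which is fine because $1$ is in the admissible set of functions.
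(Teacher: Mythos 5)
Your proposal is correct and follows essentially the same route as the paper: the first batch is a direct application of \cref{lem:dist<kappa} (with the constant factor $1$ handling the one-variable matrices, exactly as the paper implicitly does), and the second batch follows from the triangle inequality together with \cref{lem:RaRalpha} and the unit-norm observation for the derivative matrices. Your write-up is merely more explicit than the paper's two-line proof.
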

\begin{proof}
    The first statement is a direct application of \cref{lem:dist<kappa} and the second statement follows immediately after using \cref{lem:RaRalpha} and the triangle inequality.
\end{proof}

\begin{lem} \label{lem:A1AnB1Bn}
    For $1 \leq i \leq n$ let $(A_i,B_i)$ be pairs of real matrices, such that for each $i$ the dimensions of $A_i$ and $B_i$ are equal. Assume moreover that the products $A_1\cdots A_n$ and $B_1 \cdots B_n$ are well defined. Finally, assume that $\|A_i-B_i\|\leq \kappa$ and let $\delta_i\geq \max(\|A_i\|,\|B_i\|,1)$. Then it holds that
    $\|A_1\cdots A_n-B_1\cdots B_n\|\leq n\kappa\cdot \delta_1\cdots \delta_n$.
\end{lem}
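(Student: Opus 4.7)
The plan is to proceed by induction on $n$, using the standard ``add and subtract'' trick to separate the contribution from the first factor. The base case $n=1$ is immediate: $\|A_1 - B_1\| \leq \kappa \leq \kappa\cdot\delta_1$ because $\delta_1 \geq 1$ by assumption.

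For the inductive step, I would write the telescoping identity
\begin{align*}
    A_1 A_2 \cdots A_n - B_1 B_2 \cdots B_n = A_1\bigl(A_2 \cdots A_n - B_2 \cdots B_n\bigr) + (A_1 - B_1) B_2 \cdots B_n,
\end{align*}
then apply the triangle inequality together with the sub-multiplicativity of the operator norm. This gives
\begin{align*}
    \|A_1 \cdots A_n - B_1 \cdots B_n\| \leq \|A_1\|\cdot\|A_2 \cdots A_n - B_2 \cdots B_n\| + \|A_1 - B_1\|\cdot\|B_2\|\cdots\|B_n\|.
\end{align*}
By the inductive hypothesis the first norm on the right is at most $(n-1)\kappa\cdot\delta_2\cdots\delta_n$, and the assumptions $\|A_1\|\leq\delta_1$, $\|A_1 - B_1\|\leq\kappa$, $\|B_i\|\leq\delta_i$ bound the remaining factors, yielding
\begin{align*}
    \|A_1 \cdots A_n - B_1 \cdots B_n\| \leq \delta_1\cdot(n-1)\kappa\cdot\delta_2\cdots\delta_n + \kappa\cdot\delta_2\cdots\delta_n.
\end{align*}
Using $\delta_1\geq 1$ to absorb the second summand into $\kappa\cdot\delta_1\cdots\delta_n$ finishes the induction.

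As an alternative, and perhaps cleaner presentation, one can do the telescoping all at once by writing
\begin{align*}
    A_1 \cdots A_n - B_1 \cdots B_n = \sum_{i=1}^n A_1 \cdots A_{i-1}\,(A_i - B_i)\,B_{i+1} \cdots B_n,
\end{align*}
and bounding each of the $n$ summands in norm by $\delta_1\cdots\delta_{i-1}\cdot\kappa\cdot\delta_{i+1}\cdots\delta_n \leq \kappa\cdot\delta_1\cdots\delta_n$ using $\delta_i\geq 1$. There is no genuine obstacle here: the only care needed is to remember to invoke $\delta_i\geq 1$ when turning a product that omits the $i$-th factor into the full product $\delta_1\cdots\delta_n$, which is precisely the reason for including the lower bound $1$ in the definition of~$\delta_i$.
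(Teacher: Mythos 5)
Your proposal is correct and follows essentially the same route as the paper: an induction whose step peels off one factor via the decomposition $A_1\cdots A_n - B_1\cdots B_n = A_1(A_2\cdots A_n - B_2\cdots B_n) + (A_1-B_1)B_2\cdots B_n$ (the paper peels off the last factor instead of the first, which is an immaterial difference), with $\delta_i \geq 1$ used to absorb the extra term. Your one-shot telescoping sum is just the unrolled version of the same argument, so no further comment is needed.
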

Note, that this lemma can also be applied if some of the matrices are vectors or even scalars.
\begin{proof}
    We prove the statement by induction on $n$. For $n=1$ the inequality $\|A_1-B_1\|\leq \kappa \delta_1$ is true since $\|A_1-B_1\|\leq \kappa$ and $\delta_1\geq 1$.

    Now let $\alpha_n \coloneqq \|A_1\cdots A_n-B_1\cdots B_n\|$. We find, using the triangle inequality, submultiplicativity of the norm and $\|A_i\|,\|B_i\| \leq \delta_i$ that
    \[
        \alpha_{n} \leq \| A_1 \cdots A_{n-1} \| \cdot \| A_{n} - B_{n} \| + \alpha_{n-1} \cdot \|B_{n}\| \leq \delta_1 \cdots \delta_{n-1} \cdot \kappa + \alpha_{n-1} \cdot \delta_{n}.
    \]
    By induction it holds that $\alpha_{n-1} \leq (n-1)\kappa \cdot \delta_1\cdots \delta_{n-1} $, thus we obtain that 
    \[
        \alpha_{n} \leq \delta_1 \cdots \delta_{n-1} \cdot \kappa + (n-1)\kappa \cdot \delta_1\cdots \delta_{n-1} \cdot \delta_{n} = \kappa \cdot \delta_1\cdots \delta_{n-1} \cdot (1 + (n-1) \delta_n) \leq n\kappa \cdot \delta_1\cdots \delta_n,
    \]
    using that $1 + (n-1) \delta_n \leq n\delta_n$. 
\end{proof}

\subsection{Rational Global Theorem}

The rational version of \cref{thm:global} has the following form:

\begin{thm}[Rational Global Theorem] \label{thm:global_rational}
    Let $\PPP$ be a pointsymmetric convex polyhedron with radius $\rho =1$ and $\widetilde{\PPP}$ a $\kappa$-rational approximation. Let $\widetilde{S} \in \widetilde{\PPP}$. Further let $\epsilon>0$ and $\thetab_1,\phib_1,\thetab_2,\phib_2,\alphab \in \Q \cap [-4,4]$.
    Let $w\in\Q^2$ be a unit vector. Denote $\Mib \coloneqq M_{\Q}(\thetab_1, \phib_1)$, $ \Miib \coloneqq M_{\Q}(\thetab_2, \phib_2)$ as well as $\Mib^{\theta} \coloneqq M_{\Q}^\theta(\thetab_1, \phib_1)$, $\Mib^{\phi} \coloneqq M_{\Q}^\phi(\thetab_1, \phib_1)$ and analogously for $\Miib^{\theta}, \Miib^{\phi}$. Finally set
    \begin{align*}
        G^{\Q}& \coloneqq \langle R_{\Q}(\alphab) \Mib \widetilde{S},w \rangle - \epsilon\cdot\big(|\langle R_{\Q}'(\alphab)  \Mib \widetilde{S},w \rangle|+|\langle R_{\Q}(\alphab) \Mib^\theta \widetilde{S},w \rangle|+|\langle R_{\Q}(\alphab) \Mib^\phi \widetilde{S},w \rangle|\big) \\
        & \hspace{11cm}- 9\epsilon^2/2 - 4\kappa ( 1 + 3 \epsilon),\\
        H^{\Q}_P & \coloneqq \langle \Miib P,w \rangle + \epsilon\cdot\big(|\langle \Miib^\theta P,w \rangle|+|\langle  \Miib^\varphi P,w \rangle|\big) + 2\epsilon^2 + 3\kappa( 1+2\epsilon).
    \end{align*}
    If $G^{\Q}>\max_{P\in \widetilde{\PPP}} H^{\Q}_P$ then there does not exist a solution to Rupert's condition (\ref{eq:rupert_condition}) to $\PPP$ with 
    \[
    (\theta_1,\varphi_1,\theta_2,\varphi_2,\alpha) \in [\thetab_1\pm\epsilon,\phib_1\pm\epsilon,\thetab_2\pm\epsilon,\phib_2\pm\epsilon,\alphab\pm\epsilon].
    \]
\end{thm}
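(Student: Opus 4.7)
The plan is to reduce the statement to the original \cref{thm:global}. Concretely, I want to show that the hypothesis $G^{\Q} > \max_{P \in \widetilde{\PPP}} H^{\Q}_{P}$ forces the real-valued inequality $G > \max_{P \in \PPP} H_P$, where $G, H_P$ are the quantities defined in \cref{thm:global} applied to the original $\PPP$ and the point $S \in \PPP$ of which $\widetilde{S}$ is the $\kappa$-rational approximation (and similarly each $P \in \PPP$ pairs with $\widetilde P \in \widetilde{\PPP}$). Once this is established, \cref{thm:global} immediately rules out any Rupert solution inside $U$; its remaining hypotheses ($\PPP$ pointsymmetric with $\rho=1$, $w$ a unit vector in $\R^2$, $S \in \PPP$) are inherited from the hypotheses here.

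The key step is estimating each scalar product. Consider first a $G$-type term such as $\langle R(\alphab) M(\thetab_1,\phib_1) S, w\rangle$ versus $\langle R_{\Q}(\alphab)\Mib \widetilde S, w\rangle$. By \cref{corr:kappa1kappa}, $\|R(\alphab) - R_{\Q}(\alphab)\|, \|M(\thetab_1,\phib_1) - \Mib\| \leq \kappa$ with all four matrix norms bounded by $1+\kappa$; also $\|S\| \leq \rho = 1$ and $\|S - \widetilde S\| \leq \kappa$, so $\|\widetilde S\| \leq 1+\kappa$. Treating $S, \widetilde S$ as $3\times 1$ matrices and applying \cref{lem:A1AnB1Bn} with $n=3$ gives
\[
    \|R(\alphab) M(\thetab_1,\phib_1) S - R_{\Q}(\alphab)\Mib \widetilde S\| \leq 3\kappa(1+\kappa)^3 \leq 4\kappa,
\]
the last inequality since $\kappa = 10^{-10}$ is tiny. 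Cauchy--Schwarz with $\|w\|=1$ then bounds the scalar product difference by $4\kappa$. The same $4\kappa$ bound applies verbatim to the three $\epsilon$-scaled $G$-terms (replacing $R$ by $R'$, or $M$ by $M^\theta$ or $M^\phi$), since each is again a product of three factors each within $\kappa$ of its real counterpart. For $H_P$ there are only two factors (matrix and vector), so \cref{lem:A1AnB1Bn} with $n=2$ yields the tighter bound $2\kappa(1+\kappa)^2 \leq 3\kappa$ per scalar product.

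Now write $A, B_1, B_2, B_3$ for the four real scalar products in $G$ and $A', B_1', B_2', B_3'$ for their rational versions, so that
\[
    G - G^{\Q} = (A - A') + \epsilon\sum_{i=1}^3 (|B_i'| - |B_i|) + 4\kappa(1+3\epsilon).
\]
Using $|A-A'| \leq 4\kappa$ and $||B_i'|-|B_i|| \leq |B_i'-B_i| \leq 4\kappa$, the first two terms sum to at least $-4\kappa(1+3\epsilon)$, hence $G \geq G^{\Q}$. An analogous computation using the $3\kappa$ bound yields $H^{\Q}_{\widetilde P} \geq H_P$ for each matched pair; taking maxima, $\max_{P \in \PPP} H_P \leq \max_{\widetilde P \in \widetilde{\PPP}} H^{\Q}_{\widetilde P}$. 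Chaining gives $G \geq G^{\Q} > \max_{\widetilde P} H^{\Q}_{\widetilde P} \geq \max_{P} H_P$, and \cref{thm:global} finishes the proof.

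The only real obstacle is careful bookkeeping: one must verify that the per-term error bounds $4\kappa$ (for $G$) and $3\kappa$ (for $H$), multiplied by the appropriate number $(1+3\epsilon)$ or $(1+2\epsilon)$ of total scalar products (weighted by $\epsilon$), match precisely the slacks subtracted from $G^{\Q}$ and added to $H^{\Q}_P$. The numerical facts $3\kappa(1+\kappa)^3 \leq 4\kappa$ and $2\kappa(1+\kappa)^2 \leq 3\kappa$ are what fix these constants, and both are immediate for $\kappa = 10^{-10}$.
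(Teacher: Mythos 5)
Your proposal is correct and follows essentially the same route as the paper: the paper packages your per-term $3\kappa$ and $4\kappa$ estimates (via \cref{lem:A1AnB1Bn} and \cref{corr:kappa1kappa}) into \cref{lem:boundskappa}, then likewise deduces $G \geq G^{\Q}$ and $H^{\Q}_P \geq H_P$ and concludes by \cref{thm:global}. Your bookkeeping matching the slacks $4\kappa(1+3\epsilon)$ and $3\kappa(1+2\epsilon)$ is exactly the intended argument.
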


The proof relies on \cref{thm:global} as well as the following lemma which is a corollary of \cref{lem:A1AnB1Bn}:
\begin{lem} \label{lem:boundskappa}
    Let $\alpha, \theta, \phi \in [-4,4]$, $P\in \R^3$ with $\|P\| \leq 1$ and let $\widetilde{P}$ be a $\kappa$-rational approximation of $P$. Set $M = M(\theta, \phi)$ and $M_{\Q} = M_{\Q}(\theta, \phi)$, $M^\theta = M^\theta(\theta, \phi)$, $M^\theta_{\Q} = M^\theta_{\Q}(\theta, \phi)$, $M^\phi = M^\phi(\theta, \phi)$, $M^\phi_{\Q} = M^\phi_{\Q}(\theta, \phi)$ as well as $R = R(\alpha)$, $R_{\Q} = R_{\Q}(\alpha)$, $R' = R'(\alpha)$, $R'_{\Q} = R'_{\Q}(\alpha)$. Finally let $w \in \R^2$ with $\|w\| = 1$. Then:
    \begin{align}
        | \langle M P, w\rangle - \langle M_{\Q} \widetilde{P}, w\rangle | & \leq 3\kappa, \label{eq:boundskappa1} \\
        | \langle M^\theta P, w\rangle - \langle M^\theta_{\Q} \widetilde{P}, w\rangle | & \leq 3\kappa,\\
        | \langle M^\phi P, w\rangle - \langle M^\phi_{\Q} \widetilde{P}, w\rangle | & \leq 3\kappa,\\
        | \langle R M P, w\rangle - \langle R_{\Q} M_{\Q} \widetilde{P}, w\rangle | & \leq 4\kappa,\label{eq:boundskappa4} \\
        | \langle R' M P, w\rangle - \langle R'_{\Q} M_{\Q} \widetilde{P}, w\rangle | & \leq 4\kappa,\\
        | \langle R M^\theta P, w\rangle - \langle R_{\Q} M^\theta_{\Q} \widetilde{P}, w\rangle | & \leq 4\kappa,\\
        | \langle R M^\phi P, w\rangle - \langle R_{\Q} M^\phi_{\Q} \widetilde{P}, w\rangle | & \leq 4\kappa.
    \end{align}
\end{lem}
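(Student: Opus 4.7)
My plan is to treat all seven inequalities uniformly: each one has the shape $|\langle X, w\rangle - \langle Y, w\rangle|$ where $X, Y \in \R^2$ are built as products of two or three factors (some matrices, the last one a vector). Since $\|w\|=1$, Cauchy--Schwarz immediately reduces the problem to bounding $\|X - Y\|$ in each case. So the work concentrates on controlling norms of differences of short matrix/vector products where the factors of $Y$ are $\kappa$-close to the factors of $X$.

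The main engine will be \cref{lem:A1AnB1Bn}, applied factor-by-factor. In every case the corresponding factors differ in norm by at most $\kappa$: for the matrix factors this is \cref{corr:kappa1kappa}, and for the vector factor this is the assumption $\|P - \widetilde{P}\| \leq \kappa$. Similarly, in every case each factor has norm at most $1+\kappa$: again by \cref{corr:kappa1kappa} for the matrices, and by the triangle inequality $\|\widetilde{P}\| \leq \|P\| + \kappa \leq 1+\kappa$ for the vector. So \cref{lem:A1AnB1Bn} with $\delta_i = 1+\kappa$ yields
\[
\|X - Y\| \leq n \kappa (1+\kappa)^n,
\]
where $n=2$ for the first three inequalities (one matrix and one vector) and $n=3$ for the last four (two matrices and one vector).

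It then remains to replace the clumsy factor $n\kappa(1+\kappa)^n$ by the cleaner $(n+1)\kappa$ claimed in the statement. This is the trivial numerical check $(1+\kappa)^n \leq (n+1)/n$ for $n \in \{2,3\}$: for $n=3$ it reads $(1 + 10^{-10})^3 \leq 4/3$, which is obvious.

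There is essentially no obstacle here; the statement is a packaging of \cref{lem:A1AnB1Bn} plus \cref{corr:kappa1kappa} plus Cauchy--Schwarz, and the only mild subtlety is remembering to treat $P$ as one of the factors in the telescoping bound (so that its perturbation $P \to \widetilde{P}$ is absorbed together with the perturbations of the matrix factors, rather than tacked on separately). If I wanted to be tidy, I could prove a single auxiliary claim \textemdash{} ``for any product $A_1 \cdots A_k P$ with $k \in \{1,2\}$ and each $\|A_i\|, \|P\| \leq 1$, the rationalized product differs in norm by at most $(k+1)\kappa$'' \textemdash{} and then derive all seven inequalities as immediate corollaries.
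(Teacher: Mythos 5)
Your proposal is correct and matches the paper's proof essentially verbatim: Cauchy--Schwarz with $\|w\|=1$ reduces each inequality to bounding the norm of a product difference, \cref{lem:A1AnB1Bn} with factor norms at most $1+\kappa$ (via \cref{corr:kappa1kappa} and $\|\widetilde{P}\|\leq\|P\|+\kappa$) gives $n\kappa(1+\kappa)^n$ for $n=2$ or $3$, and the numerical checks $(1+\kappa)^2<3/2$, $(1+\kappa)^3<4/3$ yield $3\kappa$ and $4\kappa$. The paper writes out only the first and fourth inequalities and declares the rest analogous, exactly as your uniform treatment suggests.
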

\begin{proof}
    The proof is a direct application of \cref{lem:A1AnB1Bn}. We will only show (\ref{eq:boundskappa1}) and (\ref{eq:boundskappa4}) as the other inequalities are proven in a completely analogous way. 
    
    Using the Cauchy-Schwarz inequality we obtain
    \[
        | \langle M P, w\rangle - \langle M_{\Q} \widetilde{P}, w\rangle | = | \langle M P - M_{\Q} \widetilde{P}, w\rangle | \leq \| MP - M_{\Q} \widetilde{P} \| \cdot \|w\|.
    \]
    Now observe that $\|w\|=1$ and $\| MP - M_{\Q} \widetilde{P} \| \leq 2 \kappa (1+\kappa)^2$ by \cref{lem:A1AnB1Bn} since $\|M_{\Q}\|, \|\widetilde{P}\| \leq 1 + \kappa$ by \cref{corr:kappa1kappa}. Then (\ref{eq:boundskappa1}) follows because $(1+\kappa)^2 < 3/2$.

    For (\ref{eq:boundskappa4}) it follows in the same way that 
    \[
        | \langle R M P, w\rangle - \langle R_{\Q} M_{\Q} \widetilde{P}, w\rangle | \leq \|R M P - R_{\Q} M_{\Q} \widetilde{P}\| \cdot \|w\|.
    \]
    Then \cref{lem:A1AnB1Bn} implies that $\|R M P - R_{\Q} M_{\Q} \widetilde{P}\| \leq 3 \kappa (1 + \kappa)^3$ and the statement follows because $(1 + \kappa)^3 < 4/3$.
\end{proof}

\begin{proof}[Proof of \cref{thm:global_rational}]
    Let $G, H_{P}$ be the quantities in \cref{thm:global}. Then observe that 
    \[
        G \geq G^{\Q} \quad \text{ and } \quad H^{\Q}_P \geq H_P
    \]
    by \cref{lem:boundskappa}. It follows that, under the assumption of the theorem,  $G \geq G^{\Q} > \max_{P} H^{\Q}_P \geq  \max_{P} H_P$, thus we can apply \cref{thm:global} and conclude. 
\end{proof}

\subsection{Rational Local Theorem}
Before we state the rational version of \cref{thm:local} we formulate the following $\kappa$-adapted version of $\epsilon$-spanning points (see \cref{sec:spanning}):
\begin{dfn} \label{def:ekspanning}
    Let $\theta, \phi \in \Q \cap [-4,4]$ and $M_{\Q} \coloneqq M_{\Q}(\theta, \phi)$. Three points $\widetilde{P}_1, \widetilde{P}_2, \widetilde{P}_3 \in \Q^3$ with $\|\widetilde{P}_1\|, \|\widetilde{P}_2\|, \|\widetilde{P}_3\| \leq 1+\kappa$ are called \emph{$\epsilon$-$\kappa$-spanning for $(\theta, \phi)$} if it holds that:
\begin{align*}
    \langle R(\pi/2) M_{\Q} \widetilde{P}_1,M_{\Q} \widetilde{P}_{2}\rangle > 2 \epsilon(\sqrt{2} + \epsilon) + 6\kappa,\\
    \langle R(\pi/2) M_{\Q} \widetilde{P}_2,M_{\Q} \widetilde{P}_{3}\rangle > 2 \epsilon(\sqrt{2} + \epsilon) + 6\kappa,\\
    \langle R(\pi/2) M_{\Q} \widetilde{P}_3,M_{\Q} \widetilde{P}_{1}\rangle > 2 \epsilon(\sqrt{2} + \epsilon) + 6\kappa.
\end{align*}
\end{dfn}

\begin{lem} \label{lem:ekspanningespanning}
    Let $P_1, P_2, P_3 \in \R^3$ with $\|P_i\| \leq 1$ and $\widetilde{P}_1, \widetilde{P}_2, \widetilde{P}_3 \in \Q^3$ be their $\kappa$-rational approximations. Assume that $\widetilde{P}_1, \widetilde{P}_2, \widetilde{P}_3$ are $\epsilon$-$\kappa$-spanning for some $\theta, \phi \in \Q \cap [-4,4]$, then $P_1, P_2, P_3$ are $\epsilon$-spanning for $\theta, \phi$.
\end{lem}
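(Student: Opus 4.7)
The plan is to show that the $\epsilon$-$\kappa$-spanning hypothesis on $\widetilde{P}_1,\widetilde{P}_2,\widetilde{P}_3$ (\cref{def:ekspanning}) is strong enough to imply the corresponding three inequalities of \cref{def:eps-spanning} for $P_1,P_2,P_3$. Since the two definitions differ only by the extra buffer $6\kappa$ on the right-hand side, it suffices to prove that for any $i,j\in\{1,2,3\}$:
\[
    \bigl|\langle R(\pi/2)M_{\Q}\widetilde{P}_i,M_{\Q}\widetilde{P}_j\rangle - \langle R(\pi/2)MP_i,MP_j\rangle\bigr| \leq 6\kappa,
\]
where $M=M(\theta,\phi)$ and $M_{\Q}=M_{\Q}(\theta,\phi)$. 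Once this is shown, the hypothesis $\langle R(\pi/2)M_{\Q}\widetilde{P}_i,M_{\Q}\widetilde{P}_{j}\rangle > 2\epsilon(\sqrt{2}+\epsilon)+6\kappa$ directly yields $\langle R(\pi/2)MP_i,MP_{j}\rangle > 2\epsilon(\sqrt{2}+\epsilon)$, which is exactly what $\epsilon$-spanning requires.

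To obtain the $6\kappa$-bound, I would first note that $\|P_i\|\leq 1$ is given and $\|\widetilde{P}_i\|\leq 1+\kappa$ follows from the triangle inequality. Then I would write the difference via the standard telescoping identity
\[
    \langle X,Y\rangle - \langle X',Y'\rangle = \langle X-X',Y\rangle + \langle X',Y-Y'\rangle,
\]
with $X=R(\pi/2)M_{\Q}\widetilde{P}_i$, $X'=R(\pi/2)MP_i$, $Y=M_{\Q}\widetilde{P}_j$, $Y'=MP_j$, and apply the Cauchy--Schwarz inequality to get
\[
    \bigl|\langle X,Y\rangle - \langle X',Y'\rangle\bigr| \leq \|X-X'\|\,\|Y\| + \|X'\|\,\|Y-Y'\|.
\]
Using \cref{lem:RaRalpha} and \cref{corr:kappa1kappa}, we have $\|R(\pi/2)\|=1$, $\|M\|\leq 1$, $\|M_{\Q}\|\leq 1+\kappa$, $\|M-M_{\Q}\|\leq \kappa$, and $\|P_i-\widetilde{P}_i\|\leq \kappa$. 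Applying \cref{lem:A1AnB1Bn} to the three-factor product $R(\pi/2)\cdot M\cdot P_i$ gives $\|X-X'\|\leq 3\kappa(1+\kappa)^2$, and to the two-factor product $M\cdot P_j$ gives $\|Y-Y'\|\leq 2\kappa(1+\kappa)$. Combining with $\|Y\|\leq (1+\kappa)^2$ and $\|X'\|\leq 1$, the total error is bounded by $3\kappa(1+\kappa)^4+2\kappa(1+\kappa)$, which for $\kappa=10^{-10}$ is clearly less than $6\kappa$.

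There is no real obstacle here — the argument is a routine propagation of rational approximation errors through bilinear expressions. The only thing to verify carefully is that the constant $6\kappa$ in \cref{def:ekspanning} is in fact sufficient to absorb the combined error from replacing $(M_{\Q},\widetilde{P}_i,\widetilde{P}_j)$ by $(M,P_i,P_j)$; the calculation above confirms this with comfortable margin.
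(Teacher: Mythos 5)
Your proposal is correct and takes essentially the same route as the paper: the paper bounds the very same difference in one stroke by applying \cref{lem:A1AnB1Bn} to the five-factor product $P_j^t M^t R(\pi/2) M P_i$ versus its rational counterpart, obtaining $5\kappa(1+\kappa)^4\leq 6\kappa$, while you telescope the bilinear form first and apply \cref{lem:A1AnB1Bn} to the two shorter products $R(\pi/2)MP_i$ and $MP_j$ --- the same error-propagation idea with slightly different bookkeeping. (A trivial slip: the two-factor application of \cref{lem:A1AnB1Bn} yields $2\kappa(1+\kappa)^2$ rather than $2\kappa(1+\kappa)$, which still leaves the total comfortably below $6\kappa$.)
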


\begin{proof}
    Set $M \coloneqq M(\theta, \phi)$ and $M_{\Q} \coloneqq M_{\Q}(\theta, \phi)$. It suffices to show that  
    \begin{align}\label{eq:Rpi/2kappa}
        | \langle R(\pi/2) M P_1,M P_{2}\rangle - \langle R(\pi/2) M_{\Q} \widetilde{P}_1,M_{\Q} \widetilde{P}_{2}\rangle| \leq 6 \kappa,
    \end{align}
    since then 
    \[
        \langle R(\pi/2) M P_1,M P_{2}\rangle \geq \langle R(\pi/2) M_{\Q} \widetilde{P}_1,M_{\Q} \widetilde{P}_{2}\rangle - 6\kappa > 2 \epsilon(\sqrt{2} + \epsilon),
    \]
    analogously for the other two inequalities, and then \cref{lem:eps-spanning} concludes the proof.

    To see that inequality (\ref{eq:Rpi/2kappa}) holds true, note that the left-hand side can be written as 
    \[
        | P_2^t M^t R(\pi/2) M P_1 - \widetilde{P}^t_{2} M^t_{\Q} R(\pi/2) M_{\Q} \widetilde{P}_1|, 
    \]
    which is bounded by $5 \kappa (1 + \kappa)^4 \leq 6\kappa$ with \cref{lem:A1AnB1Bn}.
\end{proof}

Because of the appearing norms in the local theorem we have to deal with square roots which usually do not map $\Q$ to $\Q$. Depending on the side of the inequality which we wish to round, we will need versions of $\sqrt{x}$ which are bigger or smaller. For this reason we introduce  $\sqrt[+]{x}$ and $\sqrt[-]{x}$:

\begin{dfn} \label{def:sqrtplusminus}
    We call a function $x \mapsto \sqrt[+]{x} : \Q_{\geq 0} \to \Q$ \emph{upper-$\Q$-square-root} if $\sqrt[+]{x} \geq \sqrt{x}$ for all $x \in \Q_{\geq0}$. Similarly $\sqrt[-]{x} : \Q_{\geq 0} \to \Q$ is called \emph{lower-$\Q$-square-root} if $\sqrt[-]{x} \leq \sqrt{x}$ for all $x \in \Q_{\geq 0}$.
\end{dfn}

The rational version of \cref{thm:local} has the following form:
\begin{thm}[Rational Local Theorem] \label{thm:local_rational}
    Let $\PPP$ be a polyhedron with radius $\rho=1$ and $\widetilde{P}_i$ be a $\kappa$-rational approximation of $P_i \in \PPP$. Set $\widetilde{\PPP} = \{\widetilde{P}_i \text{ for } P_i \in \PPP \}$. Let $P_1, P_2, P_3, Q_1, Q_2, Q_3 \in \PPP$ be not necessarily distinct and assume that $P_1, P_2, P_3$ and $Q_1, Q_2, Q_3$ are congruent. 
    Let $\epsilon>0$ and $\thetab_1,\phib_1,\thetab_2,\phib_2,\alphab \in \Q \cap [-4,4]$. 
    Set $\Xib \coloneqq X_{\Q}(\thetab_1,\phib_1), \Xiib \coloneqq X_{\Q}(\thetab_2,\phib_2)$ as well as $\Mib \coloneqq M_{\Q}(\thetab_1,\phib_1), \Miib \coloneqq M_{\Q}(\thetab_2,\phib_2)$.
    Assume that there exist $\sigma_P, \sigma_Q \in \{0,1\}$ such that 
    \[
        (-1)^{\sigma_P} \langle \Xib,\widetilde{P}_i\rangle>\sqrt{2}\varepsilon + 3\kappa \quad \text{and} \quad
        (-1)^{\sigma_Q} \langle \Xiib , \widetilde{Q}_i\rangle>\sqrt{2}\varepsilon + 3\kappa, \tag{A$^{\Q}_\epsilon$}
    \]
    for all $i=1,2,3$.
    Moreover, assume that $\widetilde{P}_1,\widetilde{P}_2,\widetilde{P}_3$ are $\epsilon$-$\kappa$-spanning for $(\thetab_1,\phib_1)$ and that $\widetilde{Q}_1,\widetilde{Q}_2,\widetilde{Q}_3$ are $\epsilon$-$\kappa$-spanning for $(\thetab_2,\phib_2)$. Let $\sqrt[+]{x}$ and $\sqrt[-]{x}$ be upper- and lower-$\Q$-square-root functions, then set $\|Z\|_{+} \coloneqq \sqrt[+]{\|Z\|^2}$ and $\|Z\|_{-} \coloneqq \sqrt[-]{\|Z\|^2}$ for $Z \in \Q^n$.
    Finally, assume that for all $i = 1,2,3$ and any $\widetilde{Q}_j \in \widetilde{\PPP} \setminus \widetilde{Q}_i$ it holds that 
    \[
        \frac{\langle \Miib \widetilde{Q}_i,\Miib (\widetilde{Q}_i-\widetilde{Q}_j)\rangle - 10\kappa - 2 \epsilon ( \|\widetilde{Q}_i-\widetilde{Q}_j\|_{+} + 2 \kappa ) \cdot  (\sqrt{2}+\varepsilon)}{ \big(\|\Miib \widetilde{Q}_i\|_{+}+\sqrt{2} \varepsilon + 3\kappa \big) \cdot \big(\|\Miib(\widetilde{Q}_i-\widetilde{Q}_j)\|_{+}+2 \sqrt{2} \varepsilon + 6\kappa\big)} > \frac{\sqrt{5} \epsilon + \delta}{r}, \tag{B$^{\Q}_\epsilon$}
    \]
    for some $r >0$ such that $\min_{i=1,2,3}\| \Miib \widetilde{Q}_i \|_{-} > r + \sqrt{2} \epsilon + 3\kappa$ and for some $\delta \in \R$ with 
    \[
        \delta = \max_{i=1,2,3}\left\|R_{\Q}(\alphab) \Mib \widetilde{P}_i-\Miib \widetilde{Q}_i\right\|_{+}/2 + 3\kappa.
    \]
    Then there exists no solution to Rupert's problem $R(\alpha) M(\theta_1,\phi_1)\PPP \subset  M(\theta_2,\phi_2)\PPP^\circ$ with 
    \[
        (\theta_1, \phi_1, \theta_2, \phi_2, \alpha) \in [\thetab_1\pm\epsilon,\phib_1\pm\epsilon,\thetab_2\pm\epsilon,\phib_2\pm\epsilon,\alphab\pm\epsilon] \subseteq \R^5.
    \]
\end{thm}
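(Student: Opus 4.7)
The plan is to mirror the proof of \cref{thm:global_rational}: I show that each rational hypothesis of \cref{thm:local_rational} implies the corresponding real hypothesis of \cref{thm:local} for the original points $P_i, Q_i \in \PPP$ (not their rational approximations), so that \cref{thm:local} delivers the desired non-existence conclusion. The four hypotheses to match are (A$_\epsilon$), the $\epsilon$-spanning property for both triples, the lower bound on $\min_i \|M(\thetab_2,\phib_2) Q_i\|$ in terms of $r$, and the bound on $\delta$ together with (B$_\epsilon$). Throughout, the main tool will be \cref{lem:A1AnB1Bn}, used to control the differences between the real and rational scalar products, norms, and products of matrices and vectors.

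For (A$^{\Q}_\epsilon$)$\Rightarrow$(A$_\epsilon$), I would apply \cref{lem:A1AnB1Bn} to the two-factor product $X^t P_i$ in order to bound $|\langle X(\thetab_1,\phib_1), P_i\rangle - \langle X_{\Q}(\thetab_1,\phib_1), \widetilde{P}_i\rangle|$. Since each factor has rational norm at most $1+\kappa$ (by \cref{corr:kappa1kappa}) and approximation error at most $\kappa$, this difference is at most $2\kappa(1+\kappa)^2 < 3\kappa$; hence if the rational inner product exceeds $\sqrt{2}\epsilon + 3\kappa$, the real one exceeds $\sqrt{2}\epsilon$, and the same argument applies to $Q_i$. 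The $\epsilon$-spanning property of the real triples then follows directly from \cref{lem:ekspanningespanning}.

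For the constant $r$, I would combine the triangle inequality with \cref{lem:A1AnB1Bn} to observe that $|\|M(\thetab_2,\phib_2) Q_i\| - \|\Miib \widetilde{Q}_i\|| \leq 2\kappa(1+\kappa)^2 \leq 3\kappa$, and since $\|\Miib \widetilde{Q}_i\|_{-} \leq \|\Miib \widetilde{Q}_i\|$ by \cref{def:sqrtplusminus}, the assumption $\|\Miib \widetilde{Q}_i\|_{-} > r + \sqrt{2}\epsilon + 3\kappa$ yields $\|M(\thetab_2,\phib_2) Q_i\| > r + \sqrt{2}\epsilon$. For $\delta$, I would bound $\|R(\alphab) M(\thetab_1,\phib_1) P_i - M(\thetab_2,\phib_2) Q_i\|$ via the triangle inequality through $R_{\Q}(\alphab) \Mib \widetilde{P}_i - \Miib \widetilde{Q}_i$; \cref{lem:A1AnB1Bn} bounds the two approximation errors by $3\kappa(1+\kappa)^3$ and $2\kappa(1+\kappa)^2$ respectively, together less than $6\kappa$ for $\kappa = 10^{-10}$. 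Dividing by $2$ and using $\|\cdot\|_{+} \geq \|\cdot\|$ then shows that the $\delta$ supplied by the rational hypothesis satisfies the bound required by \cref{thm:local}.

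The main obstacle, and the longest step, is (B$^{\Q}_\epsilon$)$\Rightarrow$(B$_\epsilon$). I would verify this by proving that the numerator of the real left-hand side is at least the numerator of the rational one, while its denominator is at most the rational denominator, so that the real fraction exceeds $(\sqrt{5}\epsilon+\delta)/r$ as needed. For the denominator factors, \cref{lem:A1AnB1Bn} with two-fold products yields $\|M(\thetab_2,\phib_2) Q_i\| \leq \|\Miib \widetilde{Q}_i\|_{+} + 3\kappa$ and $\|M(\thetab_2,\phib_2)(Q_i - Q_j)\| \leq \|\Miib(\widetilde{Q}_i - \widetilde{Q}_j)\|_{+} + 6\kappa$, the extra factor in the latter coming from the $2\kappa$-approximation error of $Q_i - Q_j$. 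For the term $\|Q_i - Q_j\| \leq \|\widetilde{Q}_i - \widetilde{Q}_j\|_{+} + 2\kappa$ a plain triangle inequality suffices. The most delicate estimate is on the bilinear form $\langle \Miib Q_i, \Miib (Q_i - Q_j)\rangle$: rewriting it as the four-fold product $Q_i^t M(\thetab_2,\phib_2)^t M(\thetab_2,\phib_2) (Q_i - Q_j)$ and applying a telescoping variant of \cref{lem:A1AnB1Bn} (in which the last factor has norm at most $2$ and approximation error at most $2\kappa$) gives a total error of at most $10\kappa$, exactly the constant appearing in (B$^{\Q}_\epsilon$). Combining these estimates confirms (B$_\epsilon$), and \cref{thm:local} then concludes the proof.
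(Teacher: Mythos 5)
Your proposal is correct and follows essentially the same route as the paper: each rational hypothesis is transferred to the corresponding hypothesis of \cref{thm:local} via the $\kappa$-approximation bounds (which the paper packages as \cref{lem:boundskappa3}, \cref{corr:deltakappa}, \cref{lem:boundskappa4} and \cref{lem:ekspanningespanning}), and \cref{thm:local} is then invoked. The only cosmetic difference lies in the $10\kappa$ and $6\kappa$ estimates for (B$_\epsilon$): the paper applies its $5\kappa$/$3\kappa$ bounds to the rescaled point $(P-Q)/2$ (a $\kappa$-approximation of norm at most $1$) and doubles, whereas you telescope the four-fold product directly with one factor of norm at most $2$ and error $2\kappa$ -- both arguments stay within the stated constants.
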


In order to prove the Rational Local \cref{thm:local_rational}, we will show that all all assumed inequalities imply the corresponding assumptions of \cref{thm:local}. For that purpose we introduce and prove the following lemmas:

\begin{lem} \label{lem:boundskappa3}
    Let $P,Q \in \R^3$ with $\|P\|,\|Q\|\leq 1$ and $\widetilde{P},\widetilde{Q}$ some respective $\kappa$-rational approximations. Moreover, let $\alpha, \theta, \phi \in \R \in [-4,4]$ and set $X = X(\theta, \phi)$, $X_{\Q} = X_{\Q}(\theta, \phi)$ as well as $M = M(\theta, \phi)$, $M_{\Q} = M_{\Q}(\theta, \phi)$. Then
    \begin{align}
        |\langle X, P \rangle - \langle X_{\Q}, \widetilde{P} \rangle| & \leq 3 \kappa, \label{eq:boundskappa3.1}\\
        |\langle MP, MQ \rangle - \langle M_{\Q} \widetilde{P}, M_{\Q}\widetilde{Q} \rangle| & \leq 5 \kappa, \label{eq:boundskappa3.3}\\
        |\| M Q \| - \| M_{\Q}\widetilde{Q} \| | & \leq 3 \kappa.\label{eq:boundskappa3.2}
    \end{align}
\end{lem}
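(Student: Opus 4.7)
The plan is to derive each of the three inequalities by directly applying \cref{lem:A1AnB1Bn} to an appropriate chain of matrix (or vector) factors, using the uniform bound $\|A - A_{\Q}\| \leq \kappa$ and $\|A_{\Q}\| \leq 1 + \kappa$ from \cref{corr:kappa1kappa}, together with the fact that $\|P - \widetilde{P}\|, \|Q - \widetilde{Q}\| \leq \kappa$ and $\|\widetilde{P}\|, \|\widetilde{Q}\| \leq 1 + \kappa$ by the triangle inequality. Throughout, I would use that $\kappa = 10^{-10}$ is tiny, so the factor $(1+\kappa)^n$ can be bounded by a quantity arbitrarily close to $1$; concretely $(1+\kappa)^n \leq 1 + 10^{-8}$ for all $n \leq 5$, which will give enough slack to absorb the higher-order terms in $\kappa$.

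For (\ref{eq:boundskappa3.1}), I would write $\langle X, P\rangle = X^t P$ and $\langle X_{\Q}, \widetilde{P}\rangle = X_{\Q}^t \widetilde{P}$ as products of two factors and apply \cref{lem:A1AnB1Bn} with $n = 2$ and $\delta_1 = \delta_2 = 1 + \kappa$, yielding a bound of $2\kappa(1+\kappa)^2$. Since $(1+\kappa)^2 < 3/2$, this gives the claimed $3\kappa$ bound. For (\ref{eq:boundskappa3.2}), the reverse triangle inequality reduces the claim to bounding $\|MQ - M_{\Q}\widetilde{Q}\|$, which is again a 2-factor product, so \cref{lem:A1AnB1Bn} gives $2\kappa(1+\kappa)^2 < 3\kappa$.

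For (\ref{eq:boundskappa3.3}), I would rewrite $\langle MP, MQ \rangle = P^t M^t M Q$ as a product of four factors and similarly for the rational version, and then apply \cref{lem:A1AnB1Bn} with $n = 4$. The bound obtained is $4\kappa(1+\kappa)^4$, and since $(1+\kappa)^4 \leq 5/4$ (trivially, as $\kappa = 10^{-10}$), this is at most $5\kappa$, as claimed. One small subtlety is that \cref{lem:A1AnB1Bn} is stated for products of matrices whose dimensions permit multiplication; here treating $P, \widetilde{P}$ as $3\times 1$ and $P^t, \widetilde{P}^t$ as $1 \times 3$ makes each chain well-defined and produces a scalar, so the lemma applies directly.

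The main (very mild) obstacle is the bookkeeping of the constant factors: one must verify that $(1+\kappa)^n$ is small enough to justify replacing $n\kappa(1+\kappa)^n$ by $(n+1)\kappa$ in each case. This holds with large margin for $\kappa = 10^{-10}$ and $n \leq 4$, so in each inequality we obtain the stated bound. No other delicate estimate is needed.
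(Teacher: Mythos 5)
Your proposal is correct and follows essentially the same route as the paper: each quantity is written as a product chain (two factors for (\ref{eq:boundskappa3.1}) and, after the reverse triangle inequality, for (\ref{eq:boundskappa3.2}); four factors for (\ref{eq:boundskappa3.3})), and \cref{lem:A1AnB1Bn} together with \cref{corr:kappa1kappa} gives the bounds $2\kappa(1+\kappa)^2\leq 3\kappa$ and $4\kappa(1+\kappa)^4\leq 5\kappa$, exactly as in the paper's proof.
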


\begin{proof}
    Note that if $\|P\| \leq 1$ then $\|\widetilde{P}\| \leq 1 + \kappa$. Rewrite (\ref{eq:boundskappa3.1}) as $ | P^t X - \widetilde{P}^t X_{\Q} | $ and then observe that \cref{lem:A1AnB1Bn} yields the bound $2 \kappa (1 + \kappa)^2 \leq 3\kappa$.

    Inequality (\ref{eq:boundskappa3.3}) follows in a similar way by rewriting the left-hand side as $| Q^t M^t MP -  \widetilde{Q}^t M_{\Q}^t M_{\Q} \widetilde{P} |$, applying \cref{lem:A1AnB1Bn} and using that $4 \kappa (1+\kappa)^4 \leq 5\kappa$.

    Finally for (\ref{eq:boundskappa3.2}), observe that by the reverse triangle inequality it holds that $|\| M Q \| - \| M_{\Q}\widetilde{Q} \| |\leq \| MQ - M_{\Q} \widetilde{Q} \| $ and the latter is bounded by $2 \kappa (1 + \kappa)^2 \leq  3\kappa$ using \cref{lem:A1AnB1Bn}.
\end{proof}

\begin{corr} \label{corr:deltakappa}
    In the setting of  \cref{lem:boundskappa3} let additionally $\thetab, \phib \in \R \cap [-4,4]$ and set $\overline{M} = M(\thetab, \phib)$, $\overline{M}_{\Q} = M_{\Q}(\thetab, \phib)$. Then
    \[
    |\| R(\alpha) M P - \overline{M} Q\|- \| R_{\Q}(\alpha) M_{\Q} \widetilde{P} - \overline{M}_{\Q} \widetilde{Q}\| | \leq 6 \kappa.\label{eq:boundskappa3.4} 
    \]
\end{corr}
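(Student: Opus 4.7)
The plan is to mimic the recipe used in the proof of \cref{lem:boundskappa3}: first reduce the quantity of interest to a norm of a difference via the reverse triangle inequality, then split that norm via the ordinary triangle inequality into two standard ``product of matrices'' differences, and finally bound each of these two differences using \cref{lem:A1AnB1Bn}.

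Concretely, I would start by noting that, by the reverse triangle inequality,
\[
\big|\|R(\alpha) M P - \overline{M}Q\| - \|R_{\Q}(\alpha) M_{\Q} \widetilde{P} - \overline{M}_{\Q} \widetilde{Q}\|\big| \le \|R(\alpha)MP - R_{\Q}(\alpha)M_{\Q}\widetilde{P}\| + \|\overline{M}Q - \overline{M}_{\Q}\widetilde{Q}\|,
\]
after grouping the four terms into the two natural pairs. The left-hand side is exactly the quantity we want to bound by $6\kappa$, and it now remains to estimate each piece on the right.

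For this, I would apply \cref{lem:A1AnB1Bn} to each summand. For the first piece, treat $R(\alpha), M, P$ (with $P$ regarded as a $3\times 1$ matrix) and $R_{\Q}(\alpha), M_{\Q}, \widetilde{P}$ as two products of three matrices of matching dimensions. By \cref{corr:kappa1kappa} together with the assumption $\|P\|\le 1$ (which gives $\|\widetilde{P}\|\le 1+\kappa$), each of the three pairs $(R(\alpha),R_{\Q}(\alpha))$, $(M,M_{\Q})$, $(P,\widetilde{P})$ differs in norm by at most $\kappa$ and each of the six matrices has norm at most $1+\kappa$, so \cref{lem:A1AnB1Bn} bounds this piece by $3\kappa(1+\kappa)^{3}$. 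The second piece is an analogous two-factor estimate for $(\overline{M},Q)$ versus $(\overline{M}_{\Q},\widetilde{Q})$ and yields the bound $2\kappa(1+\kappa)^{2}$.

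Finally, combining the two estimates one obtains
\[
3\kappa(1+\kappa)^{3} + 2\kappa(1+\kappa)^{2} \;=\; \kappa(1+\kappa)^{2}\bigl(5+3\kappa\bigr),
\]
which at the scale $\kappa=10^{-10}$ is comfortably below $6\kappa$. No real obstacle is expected in this proof: the argument is a direct variation of those in \cref{lem:boundskappa3}, and the only minor check is the numerical rounding of $5+O(\kappa)$ up to $6$, which is immediate.
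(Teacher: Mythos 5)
Your proposal is correct and follows essentially the same route as the paper: the same splitting $\big|\|a-b\|-\|c-d\|\big|\leq\|a-c\|+\|b-d\|$ into the two product-difference terms, each bounded via \cref{lem:A1AnB1Bn} (the paper just cites its earlier bounds from \cref{lem:boundskappa} and \cref{lem:boundskappa3} instead of redoing the computation), with the same harmless numerical slack absorbed into $6\kappa$.
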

\begin{proof}
    Using $\big| \|a-b\| - \|c-d\|\big| \leq \|a-c\| + \|b-d\|$ we first bound the left-hand side by 
    \[
        \| R(\alpha) M P - R_{\Q}(\alpha) M_{\Q} \widetilde{P} \| + \| \overline{M} Q - \overline{M}_{\Q} \widetilde{Q}\|,
    \]
    then apply (\ref{eq:boundskappa3.2}) and the same proof as of (\ref{eq:boundskappa4}) to obtain the desired bound $ 3\kappa + 3\kappa = 6 \kappa$.
\end{proof}

\begin{corr} \label{lem:boundskappa4}
    In the setting of \cref{lem:boundskappa3}, let $\sqrt[+]{x}$ be an upper-$\Q$-square-root function and set $\|x\|_{+} \coloneqq \sqrt[+]{\|x\|^2}$. Set
    \[
        A =  \frac{\langle M P, M(P-Q)\rangle - 2 \epsilon  \|P-Q\| \cdot  (\sqrt{2}+\varepsilon)}{ \big(\| M P\|+\sqrt{2} \varepsilon \big) \cdot \big(\|M(P-Q)\|+2 \sqrt{2} \varepsilon\big)} 
    \]
    as well as 
    \[
        A_{\Q} =         \frac{\langle M_{\Q} \widetilde{P}, M_{\Q} (\widetilde{P}-\widetilde{Q})\rangle - 10\kappa - 2 \epsilon ( \|\widetilde{P}-\widetilde{Q}\|_{+} + 2 \kappa ) \cdot  (\sqrt{2}+\varepsilon)}{ \big(\| M_{\Q} \widetilde{P}\|_{+}+\sqrt{2} \varepsilon + 3\kappa \big) \cdot \big(\|M_{\Q}(\widetilde{P}-\widetilde{Q})\|_{+}+2 \sqrt{2} \varepsilon + 6\kappa\big)}.
    \]
    Assuming that $A_{\Q} > 0$, it then holds that $A \geq A_{\Q}$.
\end{corr}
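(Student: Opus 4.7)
The plan is to establish two separate inequalities: that the numerator of $A$ is at least the numerator of $A_{\Q}$, and that the denominator of $A$ is at most the denominator of $A_{\Q}$. Both denominators are manifestly positive, so these two bounds combine to give $A \geq A_{\Q}$ in the regime of interest -- namely where the numerator of $A_{\Q}$ is non-negative. This is the only case relevant for the application to \cref{thm:local_rational}, since hypothesis (B$^{\Q}_\epsilon$) forces $A_{\Q}$ to be strictly positive.

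For the numerator, I would first decompose
\[
\langle MP, M(P-Q)\rangle = \langle MP, MP\rangle - \langle MP, MQ\rangle,
\]
and do the same for $\langle M_{\Q}\widetilde{P}, M_{\Q}(\widetilde{P}-\widetilde{Q})\rangle$. Applying inequality (\ref{eq:boundskappa3.3}) of \cref{lem:boundskappa3} separately to each of the two scalar product pairs yields
\[
\langle MP, M(P-Q)\rangle \geq \langle M_{\Q}\widetilde{P}, M_{\Q}(\widetilde{P}-\widetilde{Q})\rangle - 10\kappa,
\]
which matches the $-10\kappa$ correction appearing in the numerator of $A_{\Q}$. For the subtracted term $2\epsilon\|P-Q\|(\sqrt{2}+\epsilon)$, the triangle inequality gives $\|P-Q\| \leq \|\widetilde{P}-\widetilde{Q}\| + 2\kappa$, and by the definition of the upper-$\Q$-square-root we have $\|\widetilde{P}-\widetilde{Q}\| \leq \|\widetilde{P}-\widetilde{Q}\|_{+}$. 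Combining these gives numerator of $A$ $\geq$ numerator of $A_{\Q}$.

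For the denominator, the first factor is handled directly by inequality (\ref{eq:boundskappa3.2}) of \cref{lem:boundskappa3} together with $\|M_{\Q}\widetilde{P}\| \leq \|M_{\Q}\widetilde{P}\|_{+}$, giving $\|MP\| + \sqrt{2}\epsilon \leq \|M_{\Q}\widetilde{P}\|_{+} + \sqrt{2}\epsilon + 3\kappa$. For the second factor, I would write $M(P-Q) - M_{\Q}(\widetilde{P}-\widetilde{Q}) = (MP - M_{\Q}\widetilde{P}) - (MQ - M_{\Q}\widetilde{Q})$ and apply the same reasoning that established (\ref{eq:boundskappa3.2}) twice, concluding via the reverse triangle inequality that $\|M(P-Q)\| \leq \|M_{\Q}(\widetilde{P}-\widetilde{Q})\|_{+} + 6\kappa$. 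This delivers exactly the $6\kappa$ correction in the second factor of $D^{\Q}$.

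The main obstacle is really just careful bookkeeping: each constant ($3\kappa$, $5\kappa$, $6\kappa$, $10\kappa$) must land in precisely the right place in the statement of $A_{\Q}$ so that no slack is wasted in the bounds. Once the numerator inequality $N \geq N^{\Q}$ and denominator inequality $0 < D \leq D^{\Q}$ are both in hand, the chain $A = N/D \geq N^{\Q}/D \geq N^{\Q}/D^{\Q} = A_{\Q}$ follows immediately whenever $N^{\Q} \geq 0$, which completes the argument in all cases needed by the proof of \cref{thm:local_rational}.
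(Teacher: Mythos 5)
Your proposal is correct and follows essentially the same route as the paper's proof: you compare numerators using inequality (\ref{eq:boundskappa3.3}) of \cref{lem:boundskappa3} (obtaining the $10\kappa$) together with the triangle inequality for $\|P-Q\|$, and denominators using (\ref{eq:boundskappa3.2}) (obtaining the $3\kappa$ and $6\kappa$), the only cosmetic difference being that the paper applies the lemma to the rescaled vector $(P-Q)/2$ and doubles, whereas you split the scalar product into the pairs $(P,P)$, $(P,Q)$ and bound the norm difference $\|M(P-Q)-M_{\Q}(\widetilde{P}-\widetilde{Q})\|$ directly. Your explicit restriction to the case where the numerator of $A_{\Q}$ is non-negative is a caveat the paper's proof leaves implicit, and it is indeed the only case relevant, since (B$^{\Q}_\epsilon$) forces $A_{\Q}>0$ in the application to \cref{thm:local_rational}.
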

\begin{proof}
    Note that $(\widetilde{P}-\widetilde{Q})/2$ is a $\kappa$-rational approximation of $(P-Q)/2$ and that $\|(P-Q)/2\| \leq 1$. Therefore, by \cref{lem:boundskappa3}'s inequality (\ref{eq:boundskappa3.3}) it holds that 
    \[
       \bigg| \Big \langle M P, M \frac{P-Q}{2} \Big \rangle - \Big \langle M_{\Q} \widetilde{P}, M_{\Q} \frac{\widetilde{P}-\widetilde{Q}}{2} \Big \rangle \bigg | \leq  5\kappa,
    \]
    and consequently we obtain that 
    \[
        \langle M P, M (P-Q) \rangle \geq \langle M P, M (\widetilde{P}-\widetilde{Q}) \rangle - 10\kappa.
    \]
    Further, the triangle inequality implies that 
    \[
        \|P-Q\| \leq \|\widetilde{P}-\widetilde{Q}\| + 2 \kappa \leq \|\widetilde{P}-\widetilde{Q}\|_{+} + 2 \kappa.
    \]
    It follows that the numerator of $A_{\Q}$ is at most the numerator of $A$ and that both are positive. 

    Finally, (\ref{eq:boundskappa3.2}) implies that 
    \[
        \| M P\| \leq \| M_{\Q} \widetilde{P}\| + 3\kappa \leq \|  M_{\Q} \widetilde{P}\|_{+} + 3\kappa, 
    \]
    and also that 
    \[
        \| M (P-Q)\| = 2\Big \| M \frac{P-Q}{2} \Big\| \leq 2 \Big \| M_{\Q} \frac{\widetilde{P} - \widetilde{Q}}{2} \Big\| + 6\kappa \leq \|  M_{\Q} (\widetilde{P} - \widetilde{Q})\|_{+} + 6\kappa.
    \]
    Therefore, the denominator of $A$ is at most the denominator of $A_{\Q}$. 
\end{proof}

\begin{proof}[Proof of \cref{thm:local_rational}]
    We shall show that we may apply \cref{thm:local}. 
    
    The inequalities in (A$^{\Q}_\epsilon$) imply (A$_\epsilon$) thanks to (\ref{eq:boundskappa3.1}) in \cref{lem:boundskappa3}. Similarly, (B$^{\Q}_\epsilon$) implies (B$_\epsilon$) by \cref{lem:boundskappa4}. Note that the bound on $r$ uses (\ref{eq:boundskappa3.2}) and the bound for $\delta$ uses \cref{corr:deltakappa}. The assumption that $\widetilde{P}_1,\widetilde{P}_2,\widetilde{P}_3$ are $\epsilon$-$\kappa$-spanning implies that $P_1,P_2,P_3$ are $\epsilon$-spanning by \cref{lem:ekspanningespanning}, and analogously for $Q_1,Q_2,Q_3$.
\end{proof}

\section{The algorithmic part of the proof} \label{sec:algoproof}

In \cref{cor:NOPbounds} of \cref{sec:noperthedron} we have proven that if $\NOP$ was Rupert then there would exists a solution to its Rupert's condition (\ref{eq:rupert_condition}) with  
\begin{align*}    
\theta_1,\theta_2&\in[0,2\pi/15] \subset [0,0.42], \\
\varphi_1&\in [0,\pi] \subset [0,3.15],\\
\varphi_2&\in [0,\pi/2] \subset [0,1.58],\\
\alpha &\in [-\pi/2,\pi/2] \subset [-1.58,1.58].
\end{align*}

Generally speaking, in order to prove that a solution for any solid $\PPP$ does not exist within any given five dimensional interval for $[\thetab_1, \phib_1, \thetab_2, \phib_2, \alphab] \pm \epsilon \subseteq \R^5$, we can establish 3 (recursive) options:
\begin{itemize}
    \item [(I)] Apply the (rational) global theorem (\cref{thm:global}),
    \item [(II)] Apply the (rational) local theorem (\cref{thm:local}),
    \item [(III)] Subdivide the interval into smaller intervals, and for each of them again apply one of (I)--(III).
\end{itemize}

To show that the Noperthedron is not Rupert, one can now think of creating a ``solution tree'' with each node corresponding to some interval and the root of the tree representing the initial interval from \cref{cor:NOPbounds}. 
If a node/interval can be proven to not contain a solution (using the global or the local theorems) then this node is a leaf. Otherwise, one might write the interval as a union of smaller intervals, each of which is then one of its children.

For $\NOP$ we have created such a tree algorithmically in the programming language \href{https://www.r-project.org/}{R} using floating point operations. 
On a good PC this took roughly 7 hours and used less than 4 GB of RAM. This tree has 585\,200 interior nodes and 18\,115\,245 leaf nodes, 17\,492\,530 of which can be solved using the global theorem and 622\,715 of which using the local theorem. It is stored in the form of a tabular database such that each of its rows corresponds to one node. Additionally, we store information about how the rational global/local theorems can be applied. 
The generation code and solution file can be found in \href{https://github.com/Jakob256/Rupert}{www.github.com/Jakob256/Rupert} with an excerpt of the latter displayed in \cref{tab:solution_tree} below. In \cref{sec:structure} we explain more details about the  structure of the solution tree.

Note that the floating point driven \href{https://www.r-project.org/}{R} code is not part of the proof of \cref{thm:main}: The algorithmic rigorous verification of the solution tree was programmed in \href{https://www.sagemath.org/}{SageMath}. This code checks that the rational versions of the global and local theorems can be applied to each leaf and that the union of them is the initial interval. This step is computationally more expensive and takes about 30 hours on the same PC. In \cref{sec:verification} we explain this code which is available on \href{https://github.com/Jakob256/Rupert}{www.github.com/Jakob256/Rupert}.

\subsection{Structure of the solution tree} \label{sec:structure}
As mentioned above, our tabular solution tree for the Noperthedron contains all necessary information needed for proving that this solid does not have Rupert's property. Each of its rows corresponds to a five-dimensional interval for a possible solution $(\theta_1, \phi_1, \theta_2, \phi_2, \alpha)$ with an indication which of the three options above (I), (II), (III) to apply. In the subdivision case (III) the data in the row points to the children of the node and in the cases (I) and (II) the row contains information on how to apply the global or local theorems to the interval. 
More precisely, the table has the following~columns:
\begin{enumerate}
    \item \textbf{ID}: Is a unique consecutive number for each row.
    \item \textbf{nodeType}: A number 1, 2 or 3, corresponding to the options (I), (II) or (III) above. 
    \item \textbf{nrChildren}: In the subdivision case (nodeType=3), the number of children this node has.
    \item \textbf{IDfirstChild}: In the case nodeType=3, the ID of the first child node. The children are stored as consequentive rows, therefore nrChildren and IDfirstChild are enough to recover~them. 
    \item \textbf{split}: When nodeType=3, for convenience, we store a variable which records how the interval is going to be split. There are the following options:
    \begin{align*}
        \text{split} =1 & \iff \text{only the interval for } \thetab_1 \text{ is split},\\
        \text{split} =2 & \iff \text{only the interval for } \phib_1 \text{ is split},\\
        \text{split} =3 & \iff \text{only the interval for } \thetab_2 \text{ is split},\\
        \text{split} =4 & \iff \text{only the interval for } \phib_2 \text{ is split},\\
        \text{split} =5 & \iff \text{only the interval for } \alphab \text{ is split},\\
        \text{split} =6 & \iff \text{all five intervals are split in half}.
    \end{align*}
    In other words, for $\mathrm{split} \in \{1,\dots,5\}$ the corresponding interval is divided in nrChildren equal parts, whereas if $\mathrm{split}=6$ then all intervals are halved resulting in $\mathrm{nrChildren}=2^5 = 32$. 
    \item \textbf{T1\_min}, \textbf{T1\_max}, \textbf{V1\_min}, \textbf{V1\_max}, \textbf{T2\_min}, \textbf{T2\_max}, \textbf{V2\_min}, \textbf{V2\_max}, \textbf{A\_min}, \textbf{A\_max} are the numerators of the boundaries for the interval of the node/row:
    \begin{align*}
        N \cdot \theta_1 & \in [T1_{\min}, T1_{\max} ], \\ 
        N \cdot\phi_1 & \in [V1_{\min}, V1_{\max} ], \\
        N \cdot\theta_2 & \in [T2_{\min}, T2_{\max} ], \\ 
        N \cdot\phi_2 & \in [V2_{\min}, V2_{\max} ], \\ 
        N \cdot\alpha & \in [A_{\min}, A_{\max} ],
    \end{align*}
    with $N$ set to 15\,360\,000.
    \item \textbf{S\_index}: The index of the vertex $S$ for applying the (rational) global theorem (\cref{thm:global_rational}), used if nodeType=1.
    \item \textbf{wx\_nominator}, \textbf{wy\_nominator}, \textbf{w\_denominator} are the numerators and common denominator of the components of the unit vector $w = (w_x, w_y) \in \Q^2$ in \cref{thm:global_rational} (present if nodeType=1).
    \item \textbf{P1\_index}, \textbf{P2\_index}, \textbf{P3\_index}, \textbf{Q1\_index}, \textbf{Q2\_index}, \textbf{Q3\_index}: The indices of the congruent triples $P_1,P_2,P_3,Q_1,Q_2,Q_3 \in \NOP$ from \cref{thm:local_rational}, used if nodeType=2.
    \item \textbf{r}, \textbf{sigma\_Q}: The numerator $r'$ of the rational number $r = r'/1000 \in \Q$ and $\sigma_Q \in \{0,1\}$ from \cref{thm:local_rational} (if nodeType=2).
\end{enumerate}

Note that all entries of the table are integers, therefore no rounding loss occurs when loading or storing it. 
When talking about \textit{indices of vertices} we refer to the ordering $\NOP = (P_0, \dots, P_{89})$ defined as
\begin{align} \label{eq:ordering}
    P_{k+15i+45 \ell}=(-1)^\ell R_z\left(\frac{2\pi k}{15}\right)C_{i+1}
\quad \text{ for } \quad i \in \{0,1,2\}, \; k \in \{0,\dots,14\}, \; \ell \in \{0,1\},
\end{align}
and $C_1, C_2, C_3 \in \Q^3$ given in \cref{sec:noperthedron}. Moreover, we always assume that $\sigma_{P} = 0$, so we only store~$\sigma_Q \in \{0,1\}$. 

A small excerpt of the solution tree is displayed in \cref{tab:solution_tree}. We note that stored as a \textit{csv} file it takes roughly 2.5 GB of storage space. A zipped version of it can be downloaded from \href{https://github.com/Jakob256/Rupert}{www.github.com/Jakob256/Rupert}.


\begin{table}[]
\vspace{-2cm}
\scriptsize
    \centering
    \Rotatebox{90}{
    \begin{tabular}{c|c|c|c|c|c|c|c|c|c|c|c|c|c|c|c|c|c|c|c|c|c|c|c|c|c|c}
ID & T & $\#$C & ID1C & sp & $\thetab_{1,{\min}}$ & $\thetab_{1,\max}$ & $\phib_{1,\min}$ & ${\phib}_{1, \max}$ & ${\thetab}_{2, \min}$ &${\thetab}_{2, \max}$ & ${\phib}_{2, \min}$ & ${\phib}_{2, \max}$ & ${\alphab}_{\min}$ & ${\alphab}_{\max}$ & $P_1$ & $P_2$ & $P_3$ & $Q_1$ & $Q_2$ & $Q_3$ & $r$ & $\sigma_Q$ & $w_x$ & $w_y$ & $w_d$ & $S$ \\
\hline
    0 & 3 & 4 & 1 & 1 & 0 & 64..00 & 0 & 48..00 & 0 & 64..00 & 0 & 24..00 & -24..00 & 24..00 &  &  &  &  &  &  &  &  &  &  &  &  \\
\hline
    1 & 3 & 30 & 5 & 2 & 0 & 16..00 & 0 & 48..00 & 0 & 64..00 & 0 & 24..00 & -24..00 & 24..00 &  &  &  &  &  &  &  &  &  &  &  &  \\
\hline
    2 & 3 & 30 & 46..67 & 2 & 16..00 & 32..00 & 0 & 48..00 & 0 & 64..00 & 0 & 24..00 & -24..00 & 24..00 &  &  &  &  &  &  &  &  &  &  &  &  \\
\hline
    3 & 3 & 30 & 94..77 & 2 & 32..00 & 48..00 & 0 & 48..00 & 0 & 64..00 & 0 & 24..00 & -24..00 & 24..00 &  &  &  &  &  &  &  &  &  &  &  &  \\
\hline
    4 & 3 & 30 & 14..51 & 2 & 48..00 & 64..00 & 0 & 48..00 & 0 & 64..00 & 0 & 24..00 & -24..00 & 24..00 &  &  &  &  &  &  &  &  &  &  &  &  \\
\hline
    5 & 3 & 4 & 35 & 3 & 0 & 16..00 & 0 & 16..00 & 0 & 64..00 & 0 & 24..00 & -24..00 & 24..00 &  &  &  &  &  &  &  &  &  &  &  &  \\
\hline
    6 & 3 & 4 & 70..31 & 3 & 0 & 16..00 & 16..00 & 32..00 & 0 & 64..00 & 0 & 24..00 & -24..00 & 24..00 &  &  &  &  &  &  &  &  &  &  &  &  \\
\hline
    7 & 3 & 4 & 10..67 & 3 & 0 & 16..00 & 32..00 & 48..00 & 0 & 64..00 & 0 & 24..00 & -24..00 & 24..00 &  &  &  &  &  &  &  &  &  &  &  &  \\
\hline
    \vdots & \vdots & \vdots & \vdots & \vdots & \vdots & \vdots & \vdots & \vdots & \vdots & \vdots & \vdots & \vdots & \vdots & \vdots & \vdots & \vdots & \vdots & \vdots & \vdots & \vdots & \vdots & \vdots & \vdots & \vdots & \vdots & \vdots \\
\hline
    \vdots & \vdots & \vdots & \vdots & \vdots & \vdots & \vdots & \vdots & \vdots & \vdots & \vdots & \vdots & \vdots & \vdots & \vdots & \vdots & \vdots & \vdots & \vdots & \vdots & \vdots & \vdots & \vdots & \vdots & \vdots & \vdots & \vdots \\
\hline
    91 & 1 &  &  &  & 0 & 80..00 & 0 & 80..00 & 80..00 & 16..00 & 80..60 & 16..20 & -23..40 & -22..80 &  &  &  &  &  &  &  &  & 53..73 & 15..64 & 16..45 & 39 \\
\hline
    92 & 1 &  &  &  & 0 & 80..00 & 80..00 & 16..00 & 0 & 80..00 & 0 & 80..60 & -24..00 & -23..40 &  &  &  &  &  &  &  &  & 98..92 & 35..15 & 10..33 & 37 \\
\hline
    \vdots & \vdots & \vdots & \vdots & \vdots & \vdots & \vdots & \vdots & \vdots & \vdots & \vdots & \vdots & \vdots & \vdots & \vdots & \vdots & \vdots & \vdots & \vdots & \vdots & \vdots & \vdots & \vdots & \vdots & \vdots & \vdots & \vdots \\
\hline
    245 & 2 &  &  &  & 0 & 20..00 & 0 & 20..00 & 0 & 20..00 & 0 & 20..40 & -22..20 & -22..80 & 30 & 31 & 38 & 79 & 80 & 87 & 955 & 1 &  &  &  &  \\
\hline
    246 & 1 &  &  &  & 0 & 20..00 & 0 & 20..00 & 0 & 20..00 & 20..40 & 40..80 & -23..60 & -22..20 &  &  &  &  &  &  &  &  & 71..05 & 19..88 & 20..37 & 39 \\
\hline
    247 & 1 &  &  &  & 0 & 20..00 & 0 & 20..00 & 0 & 20..00 & 20..40 & 40..80 & -22..20 & -22..80 &  &  &  &  &  &  &  &  & 71..05 & 19..88 & 20..37 & 39 \\
\hline
    248 & 2 &  &  &  & 0 & 20..00 & 0 & 20..00 & 20..00 & 40..00 & 0 & 20..40 & -23..60 & -22..20 & 30 & 31 & 38 & 79 & 80 & 87 & 955 & 1 &  &  &  &  \\
\hline
    249 & 2 &  &  &  & 0 & 20..00 & 0 & 20..00 & 20..00 & 40..00 & 0 & 20..40 & -22..20 & -22..80 & 30 & 31 & 38 & 79 & 80 & 87 & 955 & 1 &  &  &  &  \\
\hline
    \vdots & \vdots & \vdots & \vdots & \vdots & \vdots & \vdots & \vdots & \vdots & \vdots & \vdots & \vdots & \vdots & \vdots & \vdots & \vdots & \vdots & \vdots & \vdots & \vdots & \vdots & \vdots & \vdots & \vdots & \vdots & \vdots & \vdots \\
\hline
    18..44 & 1 &  &  &  & 48..00 & 64..00 & 46..00 & 48..00 & 48..00 & 64..00 & 22..80 & 24..00 & 22..80 & 24..00 &  &  &  &  &  &  &  &  & 33..40 & -14..51 & 14..49 & 78 \\
    \end{tabular}
    }
    \caption{Excerpt from the solution tree for $\NOP$.}
    \label{tab:solution_tree}
\end{table}

\subsection{Verification} \label{sec:verification}
In order to verify that the solution tree indeed proves \cref{thm:main} we must check the following assertions:
\begin{enumerate}
    \item[(A)] The union of all leaf intervals (rows with nodeType=1 or 2) contains the interval in \cref{cor:NOPbounds},
    \item[(B)] For each row with nodeType=1 one can apply \cref{thm:global_rational} and for each row with nodeType=2 one can apply \cref{thm:local_rational}.
\end{enumerate}

In order to check (A) we first verify that for each node the union of all its children contains its interval. Further we check that all nodes are either leafs (nodeType=1 or 2) or interval nodes (nodeType=3), and that the root contains the interval in \cref{cor:NOPbounds}. With the fact that the ID of any child is always strictly larger than the ID of the parent, this proves (A).

The verification of (B) splits into two cases: nodeType=1 or nodeType=2. They are explained in \cref{sec:veriglob} and \cref{sec:veriloc}. For each of them $\widetilde{\NOP}$ is defined as the following truncation of all vertices of $\NOP$ to 16 decimal places and is therefore a $\kappa$-rational approximation:
\[
    \widetilde{\NOP} \coloneqq \lfloor10^{16} \cdot \NOP \rfloor /10^{16}.
\]
The indices correspond to points via the ordering in (\ref{eq:ordering}). Both cases start with the following two operations:
\begin{enumerate}
\item Set $N=15\,360\,000$ and compute 
    \begin{align*}
    \thetab_1 & = (T1_{\min} + T1_{\max} )/(2N), \\ 
    \phib_1 & = (V1_{\min} + V1_{\max} )/(2N), \\
    \thetab_2 & = (T2_{\min} + T2_{\max} )/(2N), \\ 
    \phib_2 & = (V2_{\min} + V2_{\max} )/(2N), \\ 
    \alphab & = (A_{\min} + A_{\max} )/(2N).
\end{align*}
Check that $\thetab_i, \phib_i, \alphab \in [-4,4]$.
\item Set $\epsilon = \max(T1_{\max} - T1_{\min}, V1_{\max} - V1_{\min},T2_{\max} - T2_{\min}, V1_{\max} - V1_{\min}, A_{\max} - A_{\min})/(2N)$.
\end{enumerate}

\subsubsection{Verification of the global theorem} \label{sec:veriglob}

The verification of the rational global theorem (\cref{thm:global_rational}) is relatively straightforward. Namely, for every row in the solution tree which has noteType=1 we must perform the following operations (additionally to 1. and 2. above):
\begin{enumerate}
    \setcounter{enumi}{2}
    \item Set $w = ($wx\_numerator/w\_denominator, wy\_numerator/w\_denominator$) \in \Q^2$ and check that $\|w\|^2 = w_1^2 + w_2^2=1$.
    \item Set $\widetilde{S}$ to be the rationally approximated point corresponding to the index given in column~S\_index.
    \item Compute $\Mib$, $\Miib$, $\Mib^\theta$, $\Mib^\phi$, $\Miib^\theta$, $\Miib^\phi \in \Q^{2 \times 3}$ and $R_{\Q}(\alphab), R'_{\Q}(\alphab) \in \Q^{2 \times 2}$ as in \cref{thm:global_rational}.
    \item Compute $G^{\Q} \in \Q$ and $H_{P}^{\Q} \in \Q$ for all $P \in \widetilde{\NOP}$ as in \cref{thm:global_rational}.
    \item Verify that $G^{\Q} > \max H_{P}^{\Q}$.
\end{enumerate}

Note that all steps only use rational numbers, thus in the computer algebra system \href{https://www.sagemath.org/}{SageMath} no rounding loss or precision errors can occur.

\subsubsection{Verification of the local theorem} \label{sec:veriloc}

For the verification of the rational local \cref{thm:local_rational} for all rows of the solution tree with nodeType=2 we have to perform more work.

First, recall that in \cref{def:sqrtplusminus} and \cref{thm:local_rational} we define/use the upper- and lower-square root functions $\sqrt[+]{x}, \sqrt[-]{x} \colon \mathbb{Q}_{\geq 0} \to \mathbb{Q}_{\geq 0}$ given via
    \[
        \sqrt[-]{x} \leq \sqrt{x} \leq \sqrt[+]{x}.
    \]
    We decided to implement the following functions since they allow a precise control on the size of the denominator/accuracy and at the same time can be relatively efficiently implemented: 
    
    First, set $\sqrt[-]{0}=\sqrt[+]{0}=0$. For any $x>0$, there exists a unique $a\in \mathbb{Z}$ such that 
    \[
        x \cdot 10^{2a}\in [10^{20},10^{22}),
    \]
    and then there exists a unique $b\in \mathbb{Z}^+$ such that 
    \[
        b^2\leq x\cdot 10^{2a} < (b+1)^2.
    \]
    Observe that $b/10^a \leq \sqrt{x}$, so we define 
    \begin{align} \label{eq:oursqrts}
        \sqrt[-]{x} \coloneqq \frac{b}{10^a} \quad \text{and} \quad \sqrt[+]{x} \coloneqq \frac{1}{\sqrt[-]{1/x}}.
    \end{align}
    We then also define 
    \[
        \|Z\|_{-} \coloneqq \sqrt[-]{\|Z\|^2} \quad \text{ and } \quad \|Z\|_{+} \coloneqq \sqrt[+]{\|Z\|^2}
    \]
    for any $Z \in \Q^{n}$, so that $\|Z\|_{-},\|Z\|_{+} \in \Q$ and $\|Z\|_{-} \leq \|Z\| \leq \|Z\|_{+}$. \\

    Similarly to the global theorem verification after the steps 1. and 2. above, we perform the following steps for each row with nodeType=2:

    \begin{enumerate}
    \setcounter{enumi}{2}
    \item Check that $P_1,P_2,P_3$ and $Q_1,Q_2,Q_3$ are congruent by applying \cref{lem:congruent} (verifying that $P^tP = Q^tQ$ and $\det(Q) \neq 0$).
    \item Compute $\Mib$, $\Miib \in \Q^{2 \times 3}$, $\Xib, \Xiib \in \Q^{3}$ and $R_{\Q}(\alphab) \in \Q^{2 \times 2}$ as in \cref{thm:global_rational}.
    \item Set $\sigma_Q =$ sigma\_Q and check that 
    \[
        \langle \Xib, \widetilde{P}_i\rangle>\sqrt{2}\varepsilon + 3 \kappa \quad \text{ and } \quad (-1)^{\sigma_Q} \langle \Xiib,\widetilde{P}_i\rangle>\sqrt{2}\varepsilon +3 \kappa.
    \]
    \item Check that $\widetilde{P}_1,\widetilde{P}_2,\widetilde{P}_3$ are $\epsilon$-$\kappa$-spanning for $(\thetab_1, \phib_1)$ and that $\widetilde{Q}_1,\widetilde{Q}_2,\widetilde{Q}_3$ are $\epsilon$-$\kappa$-spanning for $(\thetab_2, \phib_2)$ by verifying the inequalities in \cref{def:ekspanning}:
        \begin{align*}
                \langle R(\pi/2) \Mib \widetilde{P}_1, \Mib \widetilde{P}_{2}\rangle > 2 \epsilon(\sqrt{2} + \epsilon) + 6\kappa,\\
                \langle R(\pi/2) \Mib \widetilde{P}_2, \Mib \widetilde{P}_{3}\rangle > 2 \epsilon(\sqrt{2} + \epsilon) + 6\kappa,\\
                \langle R(\pi/2) \Mib \widetilde{P}_3, \Mib \widetilde{P}_{1}\rangle > 2 \epsilon(\sqrt{2} + \epsilon) + 6\kappa,
        \end{align*}
        and 
            \begin{align*}
                \langle R(\pi/2) \Miib \widetilde{Q}_1, \Miib \widetilde{Q}_{2}\rangle > 2 \epsilon(\sqrt{2} + \epsilon) + 6\kappa,\\
                \langle R(\pi/2) \Miib \widetilde{Q}_2, \Miib \widetilde{Q}_{3}\rangle > 2 \epsilon(\sqrt{2} + \epsilon) + 6\kappa,\\
                \langle R(\pi/2) \Miib \widetilde{Q}_3, \Miib \widetilde{Q}_{1}\rangle > 2 \epsilon(\sqrt{2} + \epsilon) + 6\kappa.
        \end{align*}
    \item Set $r = r'/1000 \in \Q$ with $r'$ taken from the r column and define
        \[
            \delta = \max_{i=1,2,3}\left\|R_{\Q}(\alphab) \Mib \widetilde{P}_i-\Miib \widetilde{Q}_i\right\|_{+}/2 + 3\kappa \in \Q.
        \]
    \item Check that $\| \Miib \widetilde{Q}_i \|_{-} > r + \sqrt{2} \epsilon + 3\kappa$ for $i=1,2,3$.
    \item Finally verify that 
    \[
        \frac{\langle \Miib \widetilde{Q}_i,\Miib (\widetilde{Q}_i-\widetilde{Q}_j)\rangle - 10\kappa - 2 \epsilon ( \|\widetilde{Q}_i-\widetilde{Q}_j\|_{+} + 2 \kappa ) \cdot  (\sqrt{2}+\varepsilon)}{ \big(\|\Miib \widetilde{Q}_i\|_{+}+\sqrt{2} \varepsilon + 3\kappa \big) \cdot \big(\|\Miib(\widetilde{Q}_i-\widetilde{Q}_j)\|_{+}+2 \sqrt{2} \varepsilon + 6 \kappa\big)} > \frac{\sqrt{5} \epsilon + \delta}{r},
    \]
    for all $i = 1,2,3$ and any $\widetilde{Q}_j \in \widetilde{\NOP} \setminus \widetilde{Q}_i$.
\end{enumerate}

Note that $\sqrt{2}$ in the steps 5., 6., 8. and 9. can be approximated by $142/100 = 1.42 > \sqrt{2}$ making the inequalities stricter and the expressions rational. Similarly, we approximate $\sqrt{5}$ in step 9. by $224/100 = 2.24 > \sqrt{5}$, so that also this condition becomes a statement on purely rational numbers. It follows that the only non-rational condition to check is step 3., namely that 
\(
P^tP = Q^tQ
\)
for $P = (P_1|P_2|P_3) \in \overline{\Q}^{3 \times 3}$, $Q = (Q_1|Q_2|Q_3)\in \overline{\Q}^{3 \times 3}$ and that $\det(Q) \neq 0$. Note that these expressions are, however, over algebraic numbers and \href{https://www.sagemath.org/}{SageMath} can prove them relatively~quickly. 

\bigskip

We implemented the verification of the solution tree integrity as well as the procedures of \cref{sec:veriglob} and \cref{sec:veriloc} in \href{https://www.sagemath.org/}{SageMath}. The total run time of this code is roughly 30h on a good PC which roughly consists of 1h for the solution tree integrity check, 16h of verification of rational global theorems and 10h of rational local theorems verification. 
This finally finishes the proof of \cref{thm:main}.

\section{Rupert but not locally Rupert} \label{sec:rupertnotlocal}
When proving that a polyhedron is not Rupert the most complicated (and most interesting) part is excluding ``local'' solutions (see also \cref{sec:discussion}) -- the main use case of \cref{thm:local}. The following definition formalizes this notion\footnote{Note that the same concept was already introduced in \cite{Scott2022} with a slightly different formulation.}: 
\begin{dfn}
    A convex pointsymmetric polyhedron is \emph{locally Rupert} if for every $\varepsilon>0$ there exists a solution $(\theta_1,\varphi_1,\theta_2,\varphi_2,\alpha)$ to Rupert's condition (\ref{eq:rupert_condition}) with
    \(
    |\theta_1-\theta_2|,|\varphi_1-\varphi_2|,|\alpha|\leq\varepsilon.
    \)
\end{dfn}

Obviously, being locally Rupert implies Rupert's property and looking at the solutions of previously known polyhedra, a straightforward question is whether Rupertness actually also implies local Rupertness: Maybe any ``non-local'' solution can be moved continuously and become ``local''. If this was true, a proof that the Noperthedron is not Rupert would be easier, since one would only need to rule out local solutions; accounted for symmetries accordingly, in some sense, one would only need to exclude a two-dimensional search space instead of the five-dimensional one. However, as we shall prove in \cref{thm:main2}, the aforementioned question has a negative answer: We can construct a convex polyhedron, named \emph{Ruperthedron} ($\RUP$), which is Rupert but not locally Rupert. Similarly to $\NOP$, we start by setting three points $D_1,D_2,D_3\in \mathbb{Q}^3$ to be:
\[
    D_1\coloneqq
        \frac{1}{5861}
        \begin{pmatrix} 
        {3939} \\ 0 \\ {4340} 
        \end{pmatrix},
    \qquad
    D_2\coloneqq \frac{1}{10^4}
        \begin{pmatrix} 
        7855 \\ 4178 \\ 4484
        \end{pmatrix},
    \qquad
    D_3\coloneqq
        \frac{1}{10^4}
        \begin{pmatrix} 
        9526 \\ 2057 \\ 1102
        \end{pmatrix}.
\]
Note that $1=\|D_1\|>\|D_2\|>\|D_3\|$. Then, as before, we define the Ruperthedron by the action of $\mathcal{C}_{30}$ on $D_1,D_2,D_3$:
\[
    \RUP \coloneqq \mathcal{C}_{30} \cdot D_1 \cup \mathcal{C}_{30}\cdot D_2\cup \mathcal{C}_{30} \cdot D_3.
\]

\begin{figure}
    \centering
    \includegraphics[width=0.5\linewidth]{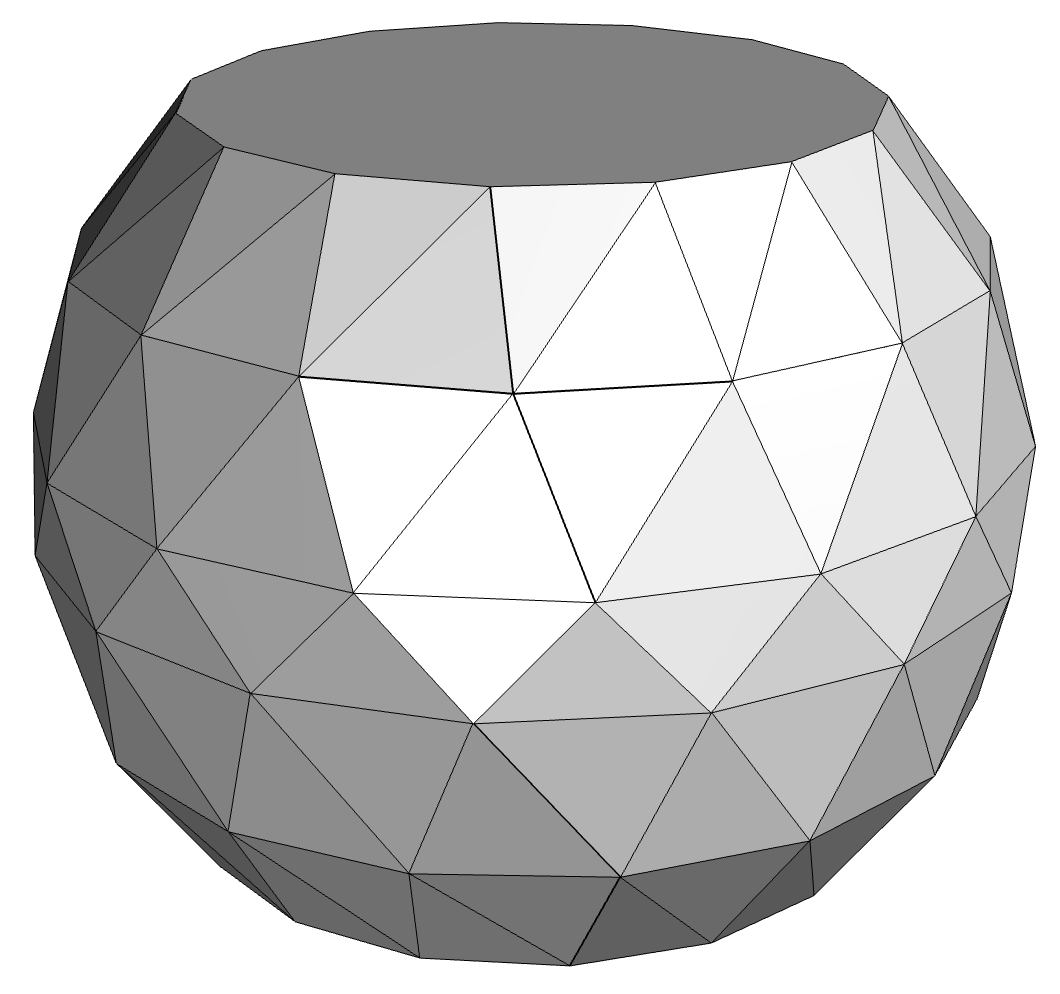}
    \caption{The Ruperthedron}
    \label{fig:ruperthedron}
\end{figure}

Like the Noperthedron, $\RUP$ has 90 vertices, is pointsymmetric with respect to the origin and all vertices have Euclidean norm of at most 1. An image of the Ruperthedron is displayed in Figure~\ref{fig:ruperthedron}.\\

Our main result of this section reads as follows:
\begin{thm} \label{thm:main2}
    The Ruperthedron has the following two properties:
    \begin{itemize}
        \item [1)] $\RUP$ is Rupert and its Nieuwland number\footnote{For the definition of Nieuwland number we refer to \cite{StYu23}.} is at least $1.003$.
        \item [2)] There does not exist a solution to Rupert's condition of $\RUP$ with 
        $$|\theta_1-\theta_2|,|\varphi_1-\varphi_2|,|\alpha|\leq 0.0006 \eqqcolon \omega.$$
    \end{itemize}
    In particular, $\RUP$ is Rupert but not locally Rupert.
\end{thm}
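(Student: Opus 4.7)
For part 1), the strategy is to exhibit a single explicit candidate. I would first run a numerical search over the five-parameter space of Rupert's condition to locate a rational tuple $(\theta_1,\phi_1,\theta_2,\phi_2,\alpha)\in\Q^5$ together with a rational scaling factor $\lambda\ge 1.003$ such that $\lambda\cdot R(\alpha)M(\theta_1,\phi_1)\RUP$ is contained in the interior of $M(\theta_2,\phi_2)\RUP$. Rigorously certifying this single inclusion is much easier than the arguments of \cref{sec:global_thm,sec:local_thm}, because we do not have to rule out a whole cube of perturbations. For each vertex $V$ of the scaled smaller polygon and each supporting halfplane of the larger polygon, one only has to check a single rational strict inequality obtained by replacing $\sin,\cos$ with $\ssin,\scos$ and absorbing the $\kappa$-rounding into a safety margin, exactly as in the rational reductions of \cref{sec:rational}. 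No derivatives and no $\epsilon$-slack are required, so part~1) reduces to a finite list of rational inequalities, whose validity also immediately implies that $\RUP$ is Rupert and that its Nieuwland number is at least $1.003$.

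For part 2), the plan is to mirror the Noperthedron proof, but restricted to the thin ``diagonal slab''
\[
    I_\omega \coloneqq \big\{(\theta_1,\phi_1,\theta_2,\phi_2,\alpha)\colon |\theta_1-\theta_2|,|\phi_1-\phi_2|,|\alpha|\le\omega\big\}.
\]
Since $\RUP$ is built from $\mathcal{C}_{30}$ exactly as $\NOP$, the analogues of all three identities of \cref{lem:symmetries} hold verbatim, so it suffices to consider directions $(\theta,\phi)$ in the fundamental domain $[0,2\pi/15]\times[0,\pi/2]$. Any candidate local solution lies, after setting $\bar\theta\coloneqq(\theta_1+\theta_2)/2$, $\bar\phi\coloneqq(\phi_1+\phi_2)/2$, $\bar\alpha\coloneqq 0$, inside a five-dimensional cube $[\bar\theta\pm\epsilon,\bar\phi\pm\epsilon,\bar\theta\pm\epsilon,\bar\phi\pm\epsilon,\pm\epsilon]$ with $\epsilon=\omega+\epsilon_0$, where $\epsilon_0$ is the resolution of a rational grid on the two-dimensional fundamental domain. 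For each grid centre $(\bar\theta,\bar\phi)$, I would then invoke the rational local \cref{thm:local_rational} with $P_i=Q_i$, $\sigma_P=\sigma_Q=0$, using three vertices of $\RUP$ that are ``in front'' of the projection from direction $(\bar\theta,\bar\phi)$ and whose projections are both $\epsilon$-$\kappa$-spanning and locally maximally distant; note that the parameter $\delta$ in \cref{thm:local_rational} becomes essentially zero here because $R(\bar\alpha)\overline{M_1}=\overline{M_2}$ on the nose when $\bar\theta_1=\bar\theta_2$, $\bar\phi_1=\bar\phi_2$ and $\bar\alpha=0$.

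The main obstacle is to guarantee that such a triple of vertices exists for every grid centre at the stringent tolerance $\omega=6\cdot 10^{-4}$. The ``frontal'' triangle of $M(\bar\theta,\bar\phi)\RUP$ switches combinatorially as $(\bar\theta,\bar\phi)$ crosses certain critical curves, and near those curves both the $\epsilon$-$\kappa$-spanning inequalities of \cref{def:ekspanning} and the locally-maximally-distant inequality (B$^\Q_\epsilon$) become tight, forcing an adaptive refinement of the two-dimensional grid. This is precisely why the points $D_1,D_2,D_3$ had to be chosen carefully: they must place the frontal vertices of $\RUP$ so that the local theorem still applies everywhere at tolerance $\omega$, while simultaneously leaving enough ``room'' at larger scales for the non-local Rupert solution of part~1) to exist. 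Just as in \cref{sec:algoproof}, I would produce the adaptive refinement as a divide-and-conquer solution tree in \href{https://www.r-project.org/}{R} using floating-point arithmetic for speed, and then rigorously verify each of its leaves with rational arithmetic in \href{https://www.sagemath.org/}{SageMath}; by \cref{thm:local_rational} each successful leaf excludes all of $I_\omega$ it covers, and together they exhaust the fundamental domain, yielding part~2).
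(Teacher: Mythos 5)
Your proposal matches the paper's proof in both parts: part 1) is established there by exhibiting and computer-certifying a single explicit solution $(0.29,0.29,0.02,2.27,-1.02)$ with Nieuwland number above $1.003$, and part 2) by exactly your strategy of reducing to the fundamental domain via the $\mathcal{C}_{30}$ symmetries, covering the diagonal slab by small cubes centered at $(\bar\theta,\bar\phi,\bar\theta,\bar\phi,0)$, and excluding each with the rational local theorem (\cref{thm:local_rational}) with $P_i=Q_i$, $\sigma_P=\sigma_Q=0$ and $\delta$ essentially $3\kappa$, generated by floating-point divide and conquer and verified in exact rational arithmetic. So your approach is essentially the same as the paper's, including the adaptive covering by regions on which the local theorem applies.
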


\begin{proof}
    A solution for $\RUP$ can be found using the methods described in \cite{StYu23}. Specifically, it can be checked using a computer algebra software that
    \[
        (\theta_1, \phi_1, \theta_2, \phi_2, \alpha)=(0.29,0.29,0.02,2.27,-1.02)
    \]
    is a solution for $\RUP$'s Rupert's property and that it has Nieuwland number greater than $1.003$. This proves (1).

    Using the same symmetries described in \cref{lem:symmetries}, it is clear that we can assume $\theta_1\in[0,2\pi/15], \varphi_1\in[0,\pi/2]$ and $\alpha \in [-\pi/2, \pi/2)$.
    We define the sequence of points $A_k \coloneqq \omega\cdot k \in \R$ for $k \in \mathbb{N}$. Note, that $A_{699}>2\pi/15$ and $A_{2618}>\pi/2$. We further define the five-dimensional closed cubes $B_{i,j}\subset \mathbb{R}^5$ for $i=0,\dots,699$, $j=0,\dots, 2618$ by
    \[
        B_{i,j} \coloneqq [A_i\pm\omega, A_j\pm\omega, A_i\pm\omega, A_j\pm\omega, 0\pm\omega].
    \]
    If a solution $\Psi = (\theta_1, \phi_1, \theta_2, \phi_2, \alpha)$ would exist as stated in (2) then, for some $0\leq i\leq 699$ and some $0\leq j\leq 2618$, it would hold that $\Psi \in B_{i,j}$. 
    
    However, similarly to the solution tree for \cref{thm:main}, we can construct regions $C_k\subset \mathbb{R}^5$ for $0\leq k \leq 3534$ such that each $B_{i,j}$ is a subset of at least one $C_k$, and such that it is possible to prove using the rational local \cref{thm:local_rational} that there does not exist a solution $\Psi \in C_k$ for all $k$. The definition of $(C_k)_{0 \leq k \leq 3534}$, the list which $B_{i,j}$  lies within which $C_k$ and the verification code are stored on \href{https://github.com/Jakob256/Rupert}{www.github.com/Jakob256/Rupert}.
\end{proof}

The structure of the solution tree for \cref{thm:main2} is similar to the one described in \cref{sec:structure} and \cref{tab:solution_tree}, but smaller since for the verification we only apply the rational local theorem. 
Moreover, this table can be simplified with the assumptions that $P_i=Q_i$ for $i=1,2,3$ (i.e. $L = \id$) and $\sigma_P = \sigma_Q = 0$. Further, $\thetab_1 = \thetab_2$, $\phib_1 = \phib_2$ and $\alphab = 0$ always in these applications of the local theorem.
The verification of this solution tree is also much faster than the one for \cref{thm:main}: On the same machine it can be done in less than 10 minutes. 

We note that the Ruperthedron is a natural candidate for disproving Conjecture 2 in \cite{Scott2022}: We believe that its dual (for any reasonable definition of this term) is locally Rupert but leave the proof for future work.

\section{Discussion} \label{sec:discussion}
In this concluding section we motivate our construction of $\NOP$ and explain how it was found. We also sketch possible directions for further research.

\subsection{Origin of the Noperthedron}

In \cite[Conjecture 2]{StYu23} we conjectured that the Rhombicosidodecahedron (\textbf{RID} in short) does not have Rupert's property. We still believe this conjecture, however the methods of this paper do not suffice to prove it. More precisely, while disproving ``global'' solutions for \textbf{RID} works in the same way as for \textbf{NOP}, using \cref{thm:global}, we cannot exclude all ``local'' solutions with \cref{thm:local}. 

Recall that the Rhombicosidodecahedron can be defined as the set of all even permutations of $(\pm1,\pm1,\pm\Phi^3)$, $(\pm\Phi^2,\pm\Phi,\pm2\Phi)$ and $(\pm(2+\Phi),0,\pm\Phi^2)$ with $\Phi = (1 + \sqrt{5})/2$ the golden ratio. One direction where the local theorem cannot be applied is the ``top view'', i.e. $\thetab_1 = \phib_1 = \thetab_2 = \phib_2 = \alphab = 0$. The reason is that, viewed from this direction, all projected points $\Miib Q_i$ on the convex hull either come from vertices normal to $\overline{X}_1$ (violating (A$_\epsilon$)) or there exist $Q_i \neq Q_j$ such that $\Miib Q_i = \Miib Q_j$ (violating condition (B$_\epsilon$)). 
Moreover, we observed that in approximately $2\%$ of random directions $(\thetab, \phib)$ the local theorem cannot be applied for $\RID$. 

This example shows: \emph{If one wants to find a solid without Rupert's property, then either one needs a new method for disproving such local highly non-generic projections, or one needs a solid where such projections do not exist.}

We note that the region around the direction $\thetab_1 = \phib_1 = \thetab_2 = \phib_2 = \alphab = 0$ for \textbf{RID} can be handled with a polynomial system of inequalities constructed similarly to \cite[\S3.3]{StYu23}, however there are even more nasty regions for the \textbf{RID} which are even more difficult to tackle. This is why we decided to drop the Rhombicosidodecahedron and look to other solids.\\

We searched through a database of polyhedra (\href{https://dmccooey.com/polyhedra/}{www.dmccooey.com/polyhedra/}) for solids where the local theorem can be universally applied for all ``local'' solutions. Of particular interest was the solid \href{https://dmccooey.com/polyhedra/132Pentagons.html}{132-Pentagon Polyhedron}, a polyhedron with an icosahedral symmetry consisting of 132 Pentagons. 
For this solid one can prove that it does not have local solutions using \cref{thm:local}, except for one direction. For this particular region we could actually exclude Rupert's property using polynomial solving techniques, however we were not satisfied with this solution. Moreover, handling the icosahedral symmetry for reducing the search space like in \cref{lem:symmetries} is rather~cumbersome. \\

Therefore, we decided to construct our own polyhedron with all desired properties: 
\begin{enumerate}
    \item[(1)] Obviously, the polyhedron must not have Rupert's property. 
    \item[(2)] The polyhedron must be convex and pointsymmetric.
    \item[(3)] For each projection direction $(\thetab, \phib)$ the local \cref{thm:local} must be applicable.
    \item[(4)] It should have as many symmetries as possible while having not too many vertices. Moreover, the symmetries should be as simple as possible.
\end{enumerate}

The pointsymmetric condition in (2) is essential as it reduces the search space from $\R^7$ to $\R^5$, as explained in \cite[Proposition 2]{StYu23}. The point (3) means that the local theorem may be applied for each $(\thetab, \phib,\thetab, \phib,0)$, i.e. there exist the triples $P_i, Q_i$, $i=1,2,3$ with the desired properties. In particular, this means that when projecting the polyhedron from direction $(\thetab, \phib)$ there always exist $\epsilon$-spanning points $P_1,P_2,P_3$ that are not projected to the same points as some other vertices. In particular, it follows that the polyhedron must not have a mirror symmetry.\footnote{We remark that mirror symmetries are connected to prism and polygonal sections and consequently to local Rupertness via the work done in~\cite{Scott2022}.} 

Evidently, the amount of symmetries reduces the search space, for instance, as mentioned in \cref{sec:intro} the 15-fold symmetry alone simplifies the search by a factor of 225. Having simple symmetries is an interesting condition: Famously, there are not that many finite groups of $\text{SO}_3(\R)$ and the symmetries of the octahedron and icosahedron are worthless for us, as they have an unbreakable mirror symmetry. 
Anything related to prisms (cyclic groups) is also not possible, as they have an intrinsic mirror symmetry as well. Antiprismas with an even number of points on one side are also not useful, as they are not point symmetric. This leaves antiprismas with an odd number of points on one side, however, they still have a mirror symmetry. 
Combining such different antiprismas with the same group $\mathcal{C}_n$, one can break this mirror symmetry while keeping $\mathcal{C}_n$. This motivates stacking multiple antiprismas with an odd number of vertices on top of each other. We chose $n=15$ and three antiprisms because this gave a good compromise between the amount of vertices and size for the interval to check (\cref{cor:NOPbounds}).

Finally, we ran a search for generator points $C_1, C_2, C_3$ (like in \cref{sec:noperthedron}) checking whether the local theorem can be applied from all directions to the resulting polyhedron. For the obtained candidates we tried to find solutions to Rupert's problem with our algorithm \cite[\S3.1]{StYu23}; if this was not possible in reasonable time, we checked whether the combined application of the global and local theorems can be used to disprove Rupert's property for the solid. 
Ultimately, we selected the solid which indeed was provably not Rupert and produced the smallest solution tree. This search took several weeks of continuous computing. 

\subsection{Open Problems}
Finally, we want to mention our favorite picks of open problems regarding Rupert's property:
\begin{itemize}
    \item Are there other ways to disprove the existence of local solutions? That is, without the method utilized in \cref{lem:langles} and \cref{thm:local}, or setting up an algebraic system of inequalities.  
    \item Is there a way to prove that a solid does not have Rupert's property without splitting $\R^5$ into millions of small regions and solving each separately? 
    \item Find a method to prove that a solution to Rupert's property is (locally) optimal, i.e. it has the (locally) the largest Nieuwland number. More generally, how to prove the conjectured Nieuwland numbers? It is, for instance, still open to prove that the Octahedron has Nieuwland number $3\sqrt{2}/4$ or that the Dodecahedron and Icosahedron both have Nieuwland number $\nu \approx 1.0108$, a root of
    \(
        P(x) = 2025x^8 - 11970x^6 + 17009x^4 - 9000x^2 + 2000
    \).
\end{itemize}

\bibliographystyle{plain}
\bibliography{bib} 

\end{document}